\documentclass[leqno]{article}
\usepackage[frenchb,english]{babel}
\usepackage[T1]{fontenc}

\usepackage{amsmath}
\usepackage{amssymb}
\usepackage{amsfonts}
\usepackage{enumerate}
\usepackage{vmargin}
\usepackage[all]{xy}
\usepackage{mathrsfs}
\usepackage{mathtools}
\usepackage{lmodern}
\usepackage[colorlinks=true,pagebackref=true]{hyperref}
\usepackage{comment}
\setmarginsrb{3cm}{3cm}{3.5cm}{3cm}{0cm}{0cm}{1.5cm}{3cm}

\footskip1.3cm

\newcommand{\B}{\ensuremath{\mathrm{B}}}
\newcommand{\C}{\ensuremath{\mathbb{C}}}
\newcommand{\E}{\ensuremath{\mathbb{E}}}
\newcommand{\D}{\ensuremath{\mathbb{D}}}

\newcommand{\K}{\mathrm{K}} 
\let\L\relax 
\newcommand{\L}{\mathrm{L}}
\newcommand{\M}{\mathrm{M}}
\newcommand{\N}{\ensuremath{\mathcal{N}}}

\newcommand{\R}{\ensuremath{\mathbb{R}}}
\newcommand{\X}{\mathrm{X}}
\let\S\relax 
\newcommand{\S}{\mathrm{S}}
\let\P\relax 
\newcommand{\P}{\mathrm{P}}

\newcommand{\T}{\ensuremath{\mathbb{T}}} 

\newcommand{\Z}{\ensuremath{\mathbb{Z}}}

\newcommand{\w}{\mathrm{w}} 
\renewcommand{\d}{\mathop{}\mathopen{}\mathrm{d}} 
\newcommand{\loc}{\mathrm{loc}} 


\newcommand{\Id}{\mathrm{Id}} 
\newcommand{\VN}{\mathrm{VN}} 
\newcommand{\cb}{\mathrm{cb}}

\newcommand{\OK}{\mathrm{OK}}
 \newcommand{\HS}{\mathrm{HS}} 
\newcommand{\ALSS}{\mathrm{ALSS}}
\newcommand{\Sp}{\mathrm{Sp}}

\renewcommand{\leq}{\ensuremath{\leqslant}}
\renewcommand{\geq}{\ensuremath{\geqslant}}
\newcommand{\qed}{\hfill \vrule height6pt  width6pt depth0pt}
\newcommand{\norm}[1]{\left\Vert#1\right\Vert}
\newcommand{\bnorm}[1]{ \big\| #1  \big\|}
\newcommand{\Bnorm}[1]{ \Big\| #1  \Big\|}
\newcommand{\bgnorm}[1]{ \bigg\| #1  \bigg\|}
\newcommand{\Bgnorm}[1]{ \Bigg\| #1  \Bigg\|}
\newcommand{\xra}{\xrightarrow} 
\newcommand{\co}{\colon}

\newcommand{\ot}{\otimes}
\newcommand{\ovl}{\overline}

\newcommand{\ul}{\mathcal{U}}

\newcommand{\ov}{\overset} 
\newcommand{\epsi}{\varepsilon}
\newcommand{\QWEP}{\mathrm{QWEP}}


\DeclareMathOperator{\Ran}{Ran} 
\DeclareMathOperator{\tr}{Tr} 

\selectlanguage{english}
\newtheorem{thm}{Theorem}
\newtheorem{defi}[thm]{Definition}
\newtheorem{prop}[thm]{Proposition}
\newtheorem{conj}[thm]{Conjecture}

\newtheorem{cor}[thm]{Corollary}
\newtheorem{lemma}[thm]{Lemma}

\newtheorem{remark}[thm]{Remark}

\newenvironment{proof}[1][]{\noindent {\it Proof #1} : }{\hbox{~}\qed
\smallskip
}

%



\begin{document}
\selectlanguage{english}
\title{\bfseries{On analyticity of semigroups on Bochner spaces and on vector-valued noncommutative $\L^p$-spaces}}
\date{}
\author{\bfseries{C\'edric Arhancet}}


\maketitle

\begin{abstract}
We show that the analyticity of semigroups $(T_t)_{t \geq 0}$ of (not necessarily positive) selfadjoint contractive Fourier multipliers on $\L^p$-spaces of any  abelian locally compact group is preserved by the tensorisation of the identity operator $\Id_X$ of a Banach space $X$ for a large class of K-convex Banach spaces, answering partially a conjecture of Pisier. The result is even new for semigroups of Fourier multipliers acting on $\L^p(\R^n)$. The proof relies on the use of noncommutative Banach spaces and we give a more general result for semigroups of Fourier multipliers acting on noncommutative $\L^p$-spaces. Finally, we also give a somewhat different version of this result in the discrete case, i.e. for Ritt operators. 

\bigskip
\end{abstract}


\makeatletter
 \renewcommand{\@makefntext}[1]{#1}
 \makeatother
 \footnotetext{
 2010 {\it Mathematics subject classification:}
 Primary 43A15, 43A22, 46L51, 47D03.
\\
{\it Key words and phrases}:  $\L^p$-spaces, noncommutative $\L^p$-spaces, Fourier multipliers, bounded analytic semigroups, $\K$-convexity, $\OK$-convexity, operator spaces, Schur multipliers.}


\section{Introduction}

In the early eighties, in a famous paper on the geometry of Banach spaces, Pisier \cite[Theorem~2.1]{Pis3} showed that a Banach space $X$ does not contain $\ell^1_n$'s uniformly if and only if the tensorisation $P \ot \Id_X$ of the Rademacher projection
\begin{equation}
\label{Projection-Rademacher}
\begin{array}{cccc}
   P   \co &  \L^2(\Omega_0)   &  \longrightarrow   &  \L^2(\Omega_0)  \\
           &   f  &  \longmapsto       & \displaystyle \sum_{k=1}^\infty \Big(\int_\Omega f\varepsilon_k \Big)\varepsilon_k \\
\end{array}
\end{equation}
induces a bounded operator on the Bochner space $\L^2(\Omega_0,X)$ where $\Omega_0$ is a probability space and where $\epsi_1,\epsi_2,\ldots$ is a sequence of independant random variables with $\P(\epsi_k=1)=\P(\epsi_k=-1)=\frac{1}{2}$. Such a Banach space $X$ is called K-convex. The heart of his proof relies on the fact, proved by himself in his article, that if $X$ is a K-convex Banach space then any weak* continuous semigroup $(T_t)_{t \geq 0}$ of \textit{positive unital\footnote{\thefootnote.That means that $T_t(1)=1$ for any $t \geq 0$.}} selfadjoint Fourier multipliers on a locally compact abelian group $G$ induces a strongly continuous bounded analytic semigroup $(T_t \ot \Id_X)_{t \geq 0}$ of contractions on the Bochner space $\L^p(G,X)$ for any $1 < p < \infty$. In 1981, in the seminars {\cite{Pis1} and \cite{Pis2} which announced the results of his paper, he stated several natural questions raised by his work. In particular, he conjectured \cite[page 17]{Pis1} that the same property holds for any weak* continuous semigroup $(T_t)_{t \geq 0}$ of selfadjoint \textit{contractive} operators on $\L^\infty(\Omega)$ where $\Omega$ is a measure space. Note that it is well-known \cite[III2 Theorem 1]{Ste} that such a semigroup induces a strongly continuous bounded analytic semigroup of contractions on the associated $\L^p$-space $\L^p(\Omega)$ and the conjecture says that the property of analyticity is preserved by the tensorisation of the identity $\Id_X$ of a K-convex Banach space $X$. Finally, Pisier showed that the $\K$-convexity is necessary for some semigroups of Fourier multipliers. 

We refer to the authoritative book \cite[Problem P.4]{HvNVW2} where this problem is explicitly stated with the additional assumptions of positivity and preservation of the unit, to \cite[Problem 11]{Xu} for a more general question, to \cite{Arh2}, \cite{Arh3}, \cite[Chapter 13]{DJT}, \cite{Fac}, \cite{Hin}, \cite{LLM}, \cite{Mau}, \cite{Pis1}, \cite{Pis2} and to the recent preprint \cite{Xu2}.

Using noncommutative Banach space theory (see \cite{ER}, \cite{Pau} and \cite{Pis7}), a quantised theory of Banach spaces, we are able to make progress on this purely Banach space question. First of all, let us recall that a noncommutative Banach space $E$ is just a Banach space equipped with an isometric linear embedding $E \hookrightarrow \B(H)$ into the space of bounded operators on a Hilbert space $H$, that is a realization of $E$ as a space of bounded operators. For this reason, noncommutative Banach spaces are called operator spaces. The morphisms in this category are the\textit{ completely bounded} maps. Moreover, the notion of $\K$-convexity admits a noncommutative analog for operator spaces, the $\OK$-convexity. An operator space $E$ is $\OK$-convex if the Rademacher projection \eqref{Projection-Rademacher} induces a \textit{completely bounded} map $P \co \L^2(\Omega_0,E) \to \L^2(\Omega_0,E)$. This notion was introduced by \cite{JP}. The following result describes our first main result (Theorem \ref{Th-amenable-semigroup}) in the very particular case of the group $\R^n$
.

\begin{thm}
\label{Th-main-Rn}
Let $(T_t)_{t \geq 0}$ be a weak* continuous semigroup of (not necessarily positive) selfadjoint contractive Fourier multipliers on $\L^\infty(\R^n)$. Suppose that $X$ is a $\K$-convex Banach space isomorphic to a $\OK$-convex operator space. Consider $1<p<\infty$. Then $(T_t)_{t \geq 0}$ induces a strongly continuous bounded analytic semigroup $(T_t \ot \Id_X)_{t \geq 0}$ of contractions on the Bochner space $\L^p(\R^n,X)$.
\end{thm}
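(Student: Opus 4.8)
The plan is to obtain the statement as the commutative specialisation of the general noncommutative result (Theorem \ref{Th-amenable-semigroup}) for $G=\R^n$, and below I describe the mechanism I expect that reduction to rely on. Since $\R^n$ is abelian, via Plancherel the von Neumann algebra $\VN(\R^n)$ is $*$-isomorphic to $\L^\infty(\R^n)$, and a weak* continuous semigroup of selfadjoint Fourier multipliers on $\L^\infty(\R^n)$ is exactly the commutative instance of a Fourier multiplier semigroup on a group von Neumann algebra; the associated noncommutative $\L^p$-space is just $\L^p(\R^n)$ and its vector-valued version is the Bochner space $\L^p(\R^n,X)$. So everything reduces to checking that the hypotheses of the general theorem hold here: contractivity of each $T_t$, and the geometric assumptions on $X$.

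First I would record the structural normal form. Weak* continuity together with selfadjointness and $\L^\infty$-contractivity forces each $T_t$ to be convolution by a symmetric real measure $\mu_t$ on $\R^n$ with $\norm{\mu_t}\leq 1$, so that $\widehat{\mu_t}=\e^{-t\psi}$ for a real symbol $\psi\geq 0$ (the generator $A$ on $\L^2(\R^n)$ being the nonnegative selfadjoint multiplier $M_\psi$). Because convolution by a finite measure is a regular operator with regular norm at most its total variation, $T_t\ot\Id_X$ is a contraction on $\L^p(\R^n,X)$ \emph{for every} Banach space $X$; strong continuity on $\L^p(\R^n,X)$ then follows from contractivity and a density argument. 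Thus $(T_t\ot\Id_X)_{t\geq 0}$ is automatically a bounded strongly continuous semigroup of contractions, and the only genuine issue is analyticity.

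For analyticity I would use the standard generator criterion: a bounded $C_0$-semigroup is bounded analytic precisely when $\sup_{t>0}\norm{t\frac{\d}{\d t}(T_t\ot\Id_X)}<\infty$ on $\L^p(\R^n,X)$. Here $t\frac{\d}{\d t}(T_t\ot\Id_X)$ is the Fourier multiplier (tensored with $\Id_X$) with symbol $-t\psi\,\e^{-t\psi}$, so the whole problem collapses to a single uniform estimate: the vector-valued Fourier multiplier norm of the symbols $m_t(\xi)=t\psi(\xi)\,\e^{-t\psi(\xi)}$ must be bounded independently of $t$. In the positive (Markovian) case one would represent $m_t$ through Rota's dilation and reduce to the Banach-level Rademacher projection, which is where classical $\K$-convexity enters Pisier's argument. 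The obstruction in the present setting is exactly that positivity is absent, so Rota's dilation is unavailable.

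This is where the noncommutative machinery and $\OK$-convexity do the work, and I expect it to be the main obstacle. The idea is to transfer the estimate for $m_t$ to the level of (vector-valued) noncommutative $\L^p$-spaces and to replace the missing positive dilation by an operator-space dilation of the selfadjoint contractive semigroup into a group of $*$-automorphisms on a larger noncommutative space; the derivative bound then reduces to the \emph{completely bounded} norm of the Rademacher projection on the $X$-valued $\L^2$, which is finite precisely because $X$ is isomorphic to an $\OK$-convex operator space, while $\K$-convexity supplies the Banach-level input exactly as in Pisier's original mechanism. The delicate points, which I would treat with care, are (i) constructing this dilation in the non-positive selfadjoint case so that the transference constants are uniform in $t$, and (ii) verifying that the cb-boundedness of the Rademacher projection is indeed what controls the family $(m_t)_{t>0}$; granting these, the uniform multiplier bound follows and hence the analyticity of $(T_t\ot\Id_X)_{t\geq 0}$ on $\L^p(\R^n,X)$.
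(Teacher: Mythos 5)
Your reduction framework and the routine verifications are consistent with the paper: Theorem \ref{Th-main-Rn} is indeed deduced from the noncommutative result (Theorem \ref{Th-amenable-semigroup}) via the identification $\L^\infty(\R^n)\cong\VN(\widehat{\R^n})$ (this is Corollary \ref{Cor-abelien-semigroups}), and your observations that each $T_t\ot\Id_X$ is contractive on $\L^p(\R^n,X)$ (regularity of convolution by a measure of total variation $\leq 1$) and strongly continuous are correct. However, there is a genuine gap at exactly the point where the theorem's content lies: you reduce everything to a uniform vector-valued multiplier bound for the symbols $m_t=t\psi\,\e^{-t\psi}$ and then write ``granting these, the uniform multiplier bound follows.'' The two ``delicate points'' you defer --- constructing a dilation for non-positive selfadjoint contractions with uniform constants, and showing that cb-boundedness of the Rademacher projection controls $(m_t)_{t>0}$ --- are not side issues; they \emph{are} the theorem, and neither is established (nor is it clear either could be, in the form you state them: no dilation of a non-positive selfadjoint contraction into a group of $*$-automorphisms is known, and the paper never needs one).

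The paper's actual mechanism is different on both counts, and this is worth internalizing. First, your premise that ``Rota's dilation is unavailable'' without positivity is precisely what the paper circumvents: after transferring the Fourier multipliers to Schur multipliers (via the amenable-group transference \eqref{Equa-transfer=}) and discretizing to finite sets of points (Proposition \ref{Transference-Schur-discrete-continuous}), it embeds the selfadjoint contractive Schur multiplier $M_{\psi_{t},\alpha}$ as an off-diagonal corner of a $2\times 2$ block Schur multiplier $W_{t,\alpha}$ which \emph{is} completely positive, unital and selfadjoint (the trick of \cite[Corollary 4.3]{Arh1}), and then applies Ricard's Markov dilation to $(W_{t/2,\alpha})^2$, which yields a genuine Rota dilation $Q\E J$ on a hyperfinite algebra. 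Second, the analyticity criterion used is not the derivative bound $\sup_{t>0}\norm{tA T_t}<\infty$ but Beurling's criterion (Theorem \ref{Th-de-Beurling}): it suffices to show $\norm{(\Id-T_t)^n\ot\Id_E}\leq\rho^n<2^n$ for a single $n$. This estimate is obtained by writing $(\Id-(W_{t/2,\alpha})^2)^{\ot n}=Q^{\ot n}(\Id-\E)^{\ot n}J^{\ot n}$, absorbing powers into tensor powers (Lemma \ref{Prop Fell absorption}), and invoking Pisier's lemma on products of commuting contractive projections (Lemma \ref{Lemma-of-Pisier-on-projections}) applied to the conditional-expectation factors $\Pi_k\ot\Id_E$; it is here, through Proposition \ref{Prop-contain-ell1} and the $\K$-convexity of $S^p(E)$, that the $\OK$-convexity hypothesis enters --- not through a cb-norm estimate on the Rademacher projection controlling the symbols $m_t$. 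Without this chain (self-adjointization, Ricard dilation, tensor absorption, Pisier's projection lemma, Beurling), your outline does not constitute a proof.
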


Note that it is unknown if any $\K$-convex Banach space $X$ is isomorphic to a $\OK$-convex operator space and we conjecture that is this is indeed the case, see Conjecture \ref{Conj}. From our point of view, our result reduce the analysis of the analyticity of semigroups of selfadjoint contractive Fourier multipliers on Bochner spaces to a independent intriguing question concerning the links between the category of Banach spaces and the category of noncommutative Banach spaces (i.e. operator spaces). Note, in most concrete cases, it is easy to check the additional condition on the $\K$-convex $X$ of the theorem since in Section \ref{K'-convex} we show very good stability properties for this assumption. 

Our result admits a generalization for (second countable) abelian locally compact groups, see Corollary \ref{Cor-abelien-semigroups} and even more generally for (unimodular second countable) amenable locally compact groups and their noncommutative $\L^p$-spaces, see Theorem \ref{Th-amenable-semigroup}. In this last context, the assumption reduces purely and simply to the $\OK$-convexity. Consequently, we have a better understanding of the situation in the noncommutative case than in the classical case of $\L^p$-spaces of measures spaces and it is rather surprising. Note that in \cite{Arh3}, we have proved a particular case in the case of amenable discrete groups and that in \cite{Arh5}, we will examine the case of non-amenable groups. Finally, at our knowledge, our paper and our previous work \cite{Arh3} are the only papers which give applications of noncommutative Banach space geometry to classical Banach space theory which are not covered by classical methods.

We will also establish an (somewhat different) analogue of our main result for some Fourier multipliers which are Ritt operators acting on $\L^p$-spaces, see Theorem \ref{Main-result} and a similar result for Schur multipliers, see Theorem \ref{Th-main-Schur}. The class of Ritt operators can be regarded as the discrete analogue of the class of bounded analytic semigroups. Again, our noncommutative methods have consequences in classical setting of $\L^p$-spaces of measure spaces and allows us to go beyond the main result of \cite{LLM}. We refer to \cite{Arh4}, \cite{AFM}, \cite{ALM}, \cite{Bl1}, \cite{Bl2}, \cite[pages 471-472]{HvNVW2}, \cite{LLM}, \cite{LM1}, \cite{Lyu}, \cite{NaZ} and to the recent preprint \cite{Xu2} and references therein for more information on Ritt operators. 

The paper is organized as follows. Section \ref{sec:preliminaries} gives background and describes indispensable tools. We introduce here some notions which are relevant to our paper. 


The next section \ref{sec:main} contains a proof of our main result Theorem \ref{Th-amenable-semigroup} for semigroups and we also describe implications for semigroups of Fourier multipliers on \textit{abelian} locally compact groups. Finally, in Section \ref{Sec-Ritt}, we describe and show results for the discrete case, i.e. for some Fourier multipliers or Schur multipliers which are Ritt operators. In Section \ref{K'-convex}, we examine the class of $\K$-convex Banach spaces which are isomorphic to $\OK$-convex operator spaces. We show the stability under the usual operations: duality, interpolation, ultrapower, etc.

\section{Peliminaries}
\label{sec:preliminaries}

\paragraph{Bounded analytic semigroups} Let $X$ be a Banach space. A strongly continuous semigroup $(T_t)_{t\geq 0}$ is called bounded analytic \cite[Definition 3.7.3]{ABHN} if there exist $0<\theta<\frac{\pi}{2}$ and a bounded holomorphic extension
$$
\begin{array}{cccc}
    &  \Sigma_\theta   &  \longrightarrow   &  \B(X)  \\
    &  z   &  \longmapsto       &  T_z  \\
\end{array}
$$
where $\Sigma_\theta=\{z\in\C^*\ :\ \vert{\arg}(z)\vert <\theta\}$ denotes the open sector of angle $2\theta$ around the positive real axis $\R_+$. We refer to the books \cite{ABHN}, \cite{HvNVW2}, \cite{EN} and \cite{Haa} for more information. If $A$ is an unbounded operator on $X$ then $-A$ is the infinitesimal generator of a bounded analytic semigroup if and only if $A$ is sectorial of type $<\frac{\pi}{2}$. We warn the reader that the terminology varies somewhat in the litterature and that the notions of ``bounded analytic semigroup'' are ``analytic semigroup'' are different.

We need the following theorem which is a corollary \cite[Theorem 1.3 and Footnote (1)]{Pis3} of a result of Beurling \cite[Theorem III]{Beu}, see also \cite[Theorem 2.1]{Pis1}, \cite[Corollary 2.5]{Fac} and \cite{Hin}. This result gives a sufficient condition on a semigroup of contractions  to ensure that this semigroup is bounded analytic.

\begin{thm}
\label{Th-de-Beurling} Let $X$ be a Banach space. Let $(T_t)_{t\geq 0}$ be a strongly continuous semigroup of contractions on $X$. Suppose that there exists some integer $n \geq 1$ such that for any $t> 0$
\begin{equation*}\label{}
\bnorm{(\Id_{X}-T_t)^n}_{X\to X} < 2^{n}.
\end{equation*}
Then the semigroup $(T_t)_{t \geq 0}$ is bounded analytic.
\end{thm}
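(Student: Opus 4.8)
The plan is to invoke the standard resolvent (equivalently derivative) characterization of bounded analytic semigroups. Writing $-A$ for the infinitesimal generator, a bounded $C_0$-semigroup is bounded analytic if and only if $T_t X\subseteq\Dom(A)$ for every $t>0$ and $\sup_{t>0}\norm{tAT_t}<\infty$; see \cite[Theorem 3.7.11]{ABHN}. Since each $T_t$ is a contraction, the operator $A$ is accretive, hence sectorial of angle at most $\tfrac{\pi}{2}$, so the Laplace representation $(\lambda+A)^{-1}=\int_0^\infty\e^{-\lambda t}T_t\d t$ furnishes the resolvent with $\norm{(\lambda+A)^{-1}}\leq(\Re\lambda)^{-1}$ on the open right half-plane for free. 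The entire content of the statement is therefore to upgrade this to a \emph{strict} sectoriality angle, i.e. to produce a uniform bound on $t\norm{AT_t}$ (equivalently a resolvent estimate $\norm{(\i s+A)^{-1}}\leq C/\vert s\vert$ on the imaginary axis).

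First I would try to localise the problem to a single time and then transfer the hypothesis into a bound for the holomorphic functional calculus of the contraction $T:=T_t$. The decisive feature is the boundary point $-1$ of the closed unit disc: for a contraction one always has $\norm{\Id_X-T}\leq 2$, with equality forced precisely when the functional calculus of $T$ reaches toward $-1$; the model obstruction is the shift semigroup on $\L^p(\R)$, for which $\norm{(\Id_X-T_t)^n}=2^n$ for every $t>0$ and which fails to be analytic. The hypothesis $\norm{(\Id_X-T_t)^n}<2^n$ is exactly the quantitative statement that this obstruction does not occur, and Beurling's theorem \cite[Theorem III]{Beu} is the complex-analytic lemma that converts such a boundary bound into holomorphy of $z\mapsto T_z$ on a genuine sector, with an opening angle depending on the deficit $2^n-\norm{(\Id_X-T_t)^n}$.

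Concretely I would realise the estimate through the resolvent on the imaginary axis, where the growth of $\lambda\mapsto(\lambda+A)^{-1}$ as $\lambda$ approaches $\i\R$ is controlled by the numbers $\norm{(\Id_X-T_t)^n}$; a Phragm\'en--Lindel\"of (or conformal-mapping) argument on the half-plane then yields the decay $C/\vert s\vert$, and hence the missing bound on $t\norm{AT_t}$. The main obstacle is precisely this last complex-analytic step: the threshold $2^n$ is sharp, so no soft accretivity argument can suffice, and one must extract quantitative holomorphy from a bound that only just beats the trivial one. A secondary and more routine technical point is to pass from the pointwise-in-$t$ strict inequalities of the hypothesis to an estimate usable uniformly in $t$, which I would handle using the semigroup law together with the upper semicontinuity of $t\mapsto\norm{(\Id_X-T_t)^n}$.
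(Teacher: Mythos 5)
The first thing to say is that the paper does not prove this theorem at all: it is imported from the literature, as a corollary of Beurling \cite[Theorem III]{Beu} in the form established by Pisier \cite[Theorem 1.3 and Footnote (1)]{Pis3} (see also \cite[Corollary 2.5]{Fac} and \cite{Hin}). So appealing to Beurling's theorem as a black box would have been perfectly consistent with the paper's own treatment, and your general frame is the right one: reduce to the characterization $\sup_{t>0}\norm{tAT_t}<\infty$, equivalently a resolvent bound $C/\vert s\vert$ on $\i\R$ \cite{ABHN}. However, your sketch never executes the step that carries all the analytic content (the Phragm\'en--Lindel\"of extraction of quantitative holomorphy from the norm deficit), and the one step you do claim to settle yourself rests on a false assertion.

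The genuine gap is the passage from the theorem's \emph{pointwise} hypothesis (strict inequality at each fixed $t>0$) to the \emph{asymptotic} hypothesis that any Beurling-type theorem consumes, namely $\limsup_{t\to 0^+}\norm{(\Id_X-T_t)^n}<2^n$ (or a liminf variant). You propose to close it by ``the semigroup law together with the upper semicontinuity of $t\mapsto\norm{(\Id_X-T_t)^n}$''. But under strong continuity this function is only \emph{lower} semicontinuous --- it is the supremum of the continuous functions $t\mapsto\norm{(\Id_X-T_t)^n x}$ over $\norm{x}\leq 1$ --- and it is emphatically not upper semicontinuous at $t=0$, where its value is $0$ while its limsup as $t\to 0^+$ is typically $2^n$ (e.g.\ for the translation semigroup). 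Thus the hypothesis of the theorem is a priori compatible with $\norm{(\Id_X-T_t)^n}\to 2^n$ as $t\to 0^+$, and no semicontinuity argument can rule this out. The semigroup law does not help either: since $\Id_X-T_{2t}=(\Id_X-T_t)(\Id_X+T_t)$, it yields $\norm{(\Id_X-T_{2t})^n}\leq 2^n\norm{(\Id_X-T_t)^n}$, i.e.\ control of large times by small ones, which is the wrong direction. Note also that a bound at a single time cannot suffice: the periodic rotation semigroup on $\L^2(\T)$ satisfies $(\Id-T_{2\pi})^n=0$ yet is not analytic. So the pointwise-to-asymptotic passage is exactly where the specific content of this formulation lies; it is what the additional argument recorded in \cite[Footnote (1)]{Pis3} (and the refinements in \cite{Fac}) is needed for. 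As written, your proposal establishes only the weaker statement in which one assumes $\sup_{t>0}\norm{(\Id_X-T_t)^n}<2^n$, or more precisely a bound strictly below $2^n$ that is uniform as $t\to 0^+$.
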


\paragraph{Operator spaces and noncommutative $\L^p$-spaces} The readers are referred to \cite{ER}, \cite{Pau}, \cite{Pis5} and \cite{Pis7} for details on operator spaces and completely bounded maps and to the survey \cite{PiX} for noncommutative $\L^p$-spaces and references therein.

If $T \co E \to F$ is a completely bounded map between two operators spaces $E$ and $F$, we denote by $\norm{T}_{\cb, E \to F}$ its completely bounded norm.

Note the following classical extension properties of some linear maps between noncommutative $\L^p$-spaces. See e.g. \cite[Lemma 3.20]{ArK} for the second part.

\begin{prop}
\label{prop-tensorisation of CP maps}
Let $M$ and $N$ be von Neumann algebras equipped with normal semifinite faithful traces.
\begin{enumerate}
	\item Let $T \co M \to N$ be a trace preserving unital normal completely positive map. Suppose $1 \leq p < \infty$. Then $T$ induces a complete contraction $T \co \L^p(M) \to \L^p(N)$.
	\item Suppose that $M$ and $N$ are approximately finite-dimensional. Let $E$ be an operator space. Let $T \co M \to N$ be a complete contraction that also induces a complete contraction on $\L^1(M)$. Suppose $1 \leq p \leq \infty$. Then the operator $T \ot \Id_E$ induces a completely contractive operator from $\L^p(M,E)$ into $\L^p(N,E)$.
\end{enumerate}
\end{prop}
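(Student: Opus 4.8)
The plan is to treat both parts by complex interpolation between the endpoints $p=\infty$ and $p=1$, so that the real work reduces to establishing complete contractivity at these two values and then invoking the operator space interpolation theory for noncommutative $\L^p$-spaces (see \cite{Pis7}, \cite{PiX}).

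\textbf{Part (1).} At $p=\infty$ the assertion is that $T \co M \to N$ is already completely contractive, which is the standard fact that a completely positive unital map satisfies $\norm{T}_{\cb}=\norm{T(1)}=1$. At $p=1$ I would pass to the trace-adjoint $T^\ast \co N \to M$ characterized by $\tau_M(T^\ast(b)a)=\tau_N(bT(a))$; normality of $T$ guarantees that $T^\ast$ is again normal, the trace-adjoint of a completely positive map is completely positive, and the trace-preservation of $T$ forces $T^\ast$ to be unital, since $\tau_M(T^\ast(1)a)=\tau_N(T(a))=\tau_M(a)$ for every $a$. Hence $T^\ast$ is a normal unital completely positive map, so by the $p=\infty$ case it is a complete contraction $N\to M$, and predualizing it yields a complete contraction $(T^\ast)_\ast \co \L^1(M)\to\L^1(N)$ coinciding with $T$ on $M\cap\L^1(M)$. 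With both endpoints in hand I would conclude by Pisier's completely isometric identity $\L^p(M)=[M,\L^1(M)]_{\frac1p}$ together with the fact that complex interpolation carries a pair of complete contractions to a complete contraction.

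\textbf{Part (2).} Here the approximate finite-dimensionality of $M$ and $N$ is exactly what makes Pisier's vector-valued spaces $\L^p(M,E)$ and $\L^p(N,E)$ available and provides the completely isometric identity $\L^p(M,E)=[M\ot_{\mathrm{min}}E,\L^1(M)\otpb E]_{\frac1p}$, where $\otpb$ is the operator space projective tensor product. I would again verify the two endpoints: at $p=\infty$ functoriality of the minimal tensor product turns the complete contractions $T$ and $\Id_E$ into a complete contraction $T\ot\Id_E$ on the (appropriately completed) minimal tensor products, while at $p=1$ the same role is played by the projective tensor product applied to the complete contraction $T\co\L^1(M)\to\L^1(N)$, which is precisely the second hypothesis on $T$. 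Interpolating then gives the claim, and this is exactly the content of \cite[Lemma 3.20]{ArK}, which I would simply cite.

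The step requiring the most care is the $p=1$ endpoint of part (1): one must check that the trace-adjoint $T^\ast$ is genuinely well defined as a \emph{normal} map into $M$ (and not merely into $\L^1(M)$), and that predualization respects the operator space structures, so that the induced map on $\L^1$ is completely contractive and not only contractive. Everything else amounts to quoting the interpolation identities for the (vector-valued) noncommutative $\L^p$-spaces, for which, in part (2), the approximate finite-dimensionality hypothesis is precisely the condition ensuring that Pisier's construction and the interpolation formula are valid.
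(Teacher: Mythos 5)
Your proposal is correct. The paper in fact states this proposition without proof, presenting it as a classical fact and referring to \cite[Lemma 3.20]{ArK} for the second part; your two-endpoint interpolation argument — unitality plus complete positivity at $p=\infty$, the trace-adjoint and its predual at $p=1$, and functoriality of the minimal and operator space projective tensor products in the vector-valued case, where approximate finite-dimensionality makes Pisier's interpolation definition of $\L^p(M,E)$ available — is precisely the standard argument behind those citations, and you rightly single out the well-definedness of $T^* \co N \to M$ as a \emph{normal} map into $M$ (which is exactly where trace preservation is used) as the one step requiring genuine care.
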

Recall the notation $T^\circ(x)=T(x^*)^*$ for a map $T \co \L^p(M) \to \L^p(N)$. 

Let $M$ be a von Neumann algebra equipped with a normal semifinite faithful trace $\tau$. Suppose that $T \co M \to M$ is a normal contraction. We say that $T$ is selfadjoint if for any $x,y \in M \cap \L^1(M)$ we have
$$
\tau\big(T(x)y^*\big)=\tau\big(x(T(y))^*\big).
$$
In this case, it is not hard to show that the restriction $T|M \cap
\L^1(M)$ extends to a contraction $T \co \L^1(M) \to \L^1(M)$. By complex interpolation, for any $ 1\leq p <\infty$, we obtain a contractive map $T \co \L^p(M) \to \L^p(M)$. Moreover, the operator $T \co \L^2(M) \to \L^2(M)$ is selfadjoint. If $T \co M \to M$ is actually a normal selfadjoint complete contraction, it is easy to see that the map $T \co \L^p(M) \to \L^p(M)$ is completely contractive for any $1 \leq p < \infty$. 

Recall that a map $T \co \L^p(M) \to \L^p(N)$ between noncommutative $\L^p$-spaces, associated with approximately finite-dimensional von Neumann algebras $M$ and $N$, is regular if for any operator space $E$, the map $T \ot \Id_E$ induces a bounded operator between the vector-valued noncommutative $\L^p$-spaces $\L^p(M,E)$ and $\L^p(N,E)$. 

\paragraph{Groups von Neumann algebras and Fourier multipliers}
Here we use the notation of \cite{ArK} and we refer to this paper for more information and references. Let $G$ be a locally compact group equipped with a fixed left invariant Haar measure $\mu_G$. For a complex measurable function $g \co G \to \C$, we write $\lambda(g)$ for the left convolution operator (in general unbounded) by $g$ on $\L^2(G)$. This means that the domain of $\lambda(g)$ consists of all $f$ of $\L^2(G)$ for which the integral $(g*f)(t)=\int_G g(s)f(s^{-1}t) \d\mu_G(s)$ exists for almost all $t \in G$ and for which the resulting function $g*f$ belongs to $\L^2(G)$, and for such $f$, we let $\lambda(g)f=g*f$. Finally, by \cite[Corollary 20.14]{HR}, each $g \in \L^1(G)$ induces a bounded operator $\lambda(g) \co \L^2(G) \to \L^2(G)$. 

Let $\VN(G)$ be the von Neumann algebra generated by the set $\big\{\lambda(g) : g \in \L^1(G)\big\}$. It is called the group von Neumann algebra of $G$ and is equal to the von Neumann algebra generated by the set $\{\lambda_s : s \in G\}$ where 
\begin{equation*}
\label{Left-translation}
   \lambda_s  \co  \begin{cases}
  \L^2(G)   &  \longrightarrow    \L^2(G)  \\
            f   &  \longmapsto        (t \mapsto f(s^{-1}t))
\end{cases}
\end{equation*}
is the left translation by $s$. 
Recall that for any $g \in \L^1(G)$ we have $\lambda(g)=\int_{G} g(s)\lambda_s \d\mu_G(s)$ where the latter integral is understood in the weak operator sense. 

In this paper, we shall limit ourselves to unimodular groups to avoid technical issues concerning modular theory. Let $G$ be a unimodular locally compact group. Recall that in this case the Plancherel weight $\tau_G$ on $\VN(G)$ is tracial. 
 Suppose $1 \leq p \leq \infty$.  We say that a (weak* continuous if $p=\infty$) bounded operator $T \co \L^p(\VN(G)) \to \L^p(\VN(G))$ is a $\L^p$-Fourier multiplier \cite[Definition 6.3]{ArK} if there exists a locally 2-integrable function $\phi \in \L^2_{\loc}(G)$ such that for any $f \in C_c(G) * C_c(G)$ ($f \in C_c(G)$ if $p=\infty$) the element $\int_{G} \phi(s) f(s) \lambda_s \d \mu_G(s)$ belongs to $\L^p(\VN(G))$ and 
\begin{equation*}
T\bigg(\int_{G} f(s) \lambda_s \d \mu_G(s)\bigg) 
= \int_{G} \phi(s) f(s) \lambda_s \d \mu_G(s), \quad \text{i.e.} \quad T(\lambda(f)) =\lambda(\phi f).
\end{equation*}
In this case, we let $M_\phi\ov{\textrm{def}}=T$.

\paragraph{Schur multipliers} Suppose $1 \leq p <\infty$. Let us remind the definition of a Schur multiplier on $S^p_\Omega=\L^p(\B(\L^2(\Omega)))$ where $(\Omega,\mu)$ is a (localizable) measure space \cite[Section 1.2]{LaS}. If $f \in \L^2(\Omega \times \Omega)$, we denote by $K_f \co \L^2(\Omega) \to \L^2(\Omega)$, $u \mapsto \int_\Omega f(z,\cdot) u(z) \d z$ the integral operator with kernel $f$. We say that a measurable function $\phi \co \Omega \times \Omega \to \C$ induces a bounded Schur multiplier on $S^p_\Omega$ if for any $f \in \L^2(\Omega \times \Omega)$ satisfying $K_f \in S^p_\Omega$ we have $K_{\phi f} \in S^p_\Omega$ and if the map $S^2_\Omega \cap S^p_\Omega \to S^p_\Omega$, $K_f \mapsto K_{\phi f}$ extends to a bounded map $M_\phi$ from $S^p_\Omega$ into $S^p_\Omega$ called the Schur multiplier associated with $\phi$. We refer to the surveys \cite{ToT1} and \cite{Tod1} for the case $p=\infty$. 

Let $G$ be a locally compact group. The right regular representation $\rho \co G \to \B(\L^2(G))$ is given by $(\rho_t\xi)(s) = \xi(st)$. Recall that $\rho$ is a strongly continuous unitary representation. Using the notation $\mathrm{Ad}_{\rho_s}^p \co S^p_G \to S^p_G$, $x \mapsto \rho_s x \rho_{s^{-1}}$,  we say that a bounded Schur multiplier $M_\phi \co S^p_G \to S^p_G$ is a Herz-Schur multiplier if for any $s \in G$ we have $M_\phi\mathrm{Ad}_{\rho_s}^p =\mathrm{Ad}_{\rho_s}^p M_\phi$. In this case, there exists a measurable function $\varphi \co G \to \C$ such that $\phi(r,s)=\varphi(rs^{-1})$ for almost every $r,s \in G$ and we let $M_{\varphi}^{\HS}\ov{\textrm{def}}=M_{\phi}$. For any integer $n$, with obvious notations, we have of course
\begin{equation}
\label{equavarphi}
  \big(M_{\varphi}^{\HS}\big)^n
	=M_{\varphi^n}^{\HS}
	\qquad \text{and} \qquad
	\beta M_{\varphi}^{\HS}+M_{\psi}^{\HS}
	=M_{\beta\varphi+\psi}^{\HS}, \qquad \beta \in \C.
\end{equation}



\paragraph{Transference} We need the following transfer results \cite{NR} \cite{CDS} between Fourier multipliers and Schur multipliers. Let $E$ be an operator space. For any $1 \leq p \leq \infty$, if $G$ is a (unimodular) second countable \footnote{\thefootnote. We warn the reader that the proof of \cite[Theorem 5.2]{CDS} is only valid for second countable groups. The proof uses Lebesgue's dominated convergence theorem in the last line of page 7007 and this result does not admit a generalization for nets.}  amenable locally compact group, by a straightforward generalization of \cite{CDS}, we have the following relations between Fourier multipliers and Schur multipliers\
\begin{equation}
\label{Equa-transfer-leq}
\bnorm{M_\varphi \ot \Id_E}_{\L^p(\VN(G),E) \to \L^p(\VN(G),E)}
\leq \bnorm{M^{\HS}_{\varphi}\ot \Id_E}_{S^p_G(E) \to S^p_G(E)}
\end{equation}
and
\begin{equation}
\label{Equa-transfer=}
\bnorm{M_\varphi \ot \Id_E}_{\cb,\L^p(\VN(G),E) \to \L^p(\VN(G),E)}
=\bnorm{M^{\HS}_{\varphi} \ot \Id_E}_{\cb,S^p_G(E) \to S^p_G(E)}.
\end{equation}

The following is a vector-valued version of \cite[Theorem 1.19]{LaS} obtained with a similar argument.

\begin{prop}
\label{Transference-Schur-discrete-continuous}
Let $\mu$ be a $\sigma$-finite Radon measure on a locally compact space $\Omega$ and let $\varphi \co \Omega \times \Omega \to \C$ be a continuous function. Suppose that $E$ is an operator space. Let $1 \leq p \leq \infty$ and $C \geq 0$. The following are equivalent:
\begin{enumerate}
	\item $\varphi$ induces a bounded Schur multiplier $M_\varphi \co S^p_\Omega(E) \to S^p_\Omega(E)$ with norm less than $C$.
	
	\item For any finite family $\alpha=\{x_1,\ldots,x_{m_\alpha}\}$ of distinct elements of $\Omega$ belonging to the support of $\mu$, the Schur multiplier $M_{[\varphi(x_i,x_j)]_{1 \leq i,j \leq m_\alpha}}$ is bounded on $S^p_{m_\alpha}(E)$ with norm less than $C$.
\end{enumerate}
\end{prop}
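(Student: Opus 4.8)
The plan is to adapt the scalar transference argument of \cite[Theorem 1.19]{LaS} to the vector-valued setting, the only genuinely new ingredient being that the relevant averaging operators are completely positive and trace preserving, hence extend to complete contractions on the $E$-valued spaces by Proposition \ref{prop-tensorisation of CP maps} (2). Throughout, for a finite family $\alpha=\{x_1,\ldots,x_{m_\alpha}\}$ of distinct points of $\Supp(\mu)$ I would fix disjoint Borel neighbourhoods $A_1,\ldots,A_{m_\alpha}$ of finite positive measure (possible since $\mu$ is Radon and the $x_i$ lie in the support) and normalise $e_i\ov{\textrm{def}}=\mu(A_i)^{-1/2}\mathbf{1}_{A_i}\in\L^2(\Omega)$, so that $(e_i)$ is orthonormal. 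The map $J$ sending the matrix unit $E_{ij}$ to the rank-one operator $e_i\ot\bar e_j$ is a normal trace preserving $*$-isomorphism of $\M_{m_\alpha}$ onto the corner $p\B(\L^2(\Omega))p$, where $p=\sum_i e_i\ot\bar e_i$; the associated conditional expectation $\mathcal{E}$ onto block-constant kernels is its trace preserving completely positive left inverse.

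For the direction (1) $\Rightarrow$ (2): since $\B(\L^2(\Omega))$ and $\M_{m_\alpha}$ are approximately finite-dimensional, Proposition \ref{prop-tensorisation of CP maps} (2) shows that $J\ot\Id_E$ and $\mathcal{E}\ot\Id_E$ are complete contractions. A direct computation identifies $\mathcal{E}(M_\varphi\ot\Id_E)J$ with the Schur multiplier on $S^p_{m_\alpha}(E)$ whose symbol is the matrix of averages of $\varphi$ over the cells $A_i\times A_j$; its norm is therefore at most $C$. Shrinking the $A_i$ to the points $x_i$ and using the continuity of $\varphi$, these averages converge to $\varphi(x_i,x_j)$ (up to a transposition, which leaves the $S^p$-norm unchanged). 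As the matrices have fixed finite size $m_\alpha$, entrywise convergence forces norm convergence of the associated multipliers, and we obtain $\norm{M_{[\varphi(x_i,x_j)]}}_{S^p_{m_\alpha}(E)\to S^p_{m_\alpha}(E)}\leq C$.

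For the direction (2) $\Rightarrow$ (1), which I expect to be the harder one, I would argue by approximation on a dense class of kernels. It suffices to bound $\norm{(M_\varphi\ot\Id_E)(K_f)}_{S^p_\Omega(E)}$ for $K_f$ ranging over a dense subset of $S^p_\Omega(E)$, say kernels $f$ that are continuous with compact support inside a compact set $L\subseteq\Omega$. Taking an increasing sequence of finite Borel partitions into cells $A^n_i$ of positive measure whose diameters tend to $0$, and sample points $x^n_i\in A^n_i\cap\Supp(\mu)$, let $\mathcal{E}_n$ be the corresponding conditional expectation onto block-constant kernels; by Proposition \ref{prop-tensorisation of CP maps} (2) each $\mathcal{E}_n\ot\Id_E$ is a complete contraction, and by a vector-valued martingale/density argument $\mathcal{E}_n\ot\Id_E\to\Id$ strongly on $S^p_\Omega(E)$ for $1\leq p<\infty$ (the case $p=\infty$ being handled by weak* approximation and lower semicontinuity of the norm). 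On block-constant kernels the operator $(M_\varphi\ot\Id_E)(\mathcal{E}_n\ot\Id_E)$ coincides, up to conjugation by the embeddings above, with the finite Schur multiplier of symbol $[\varphi(x^n_i,x^n_j)]$, of norm at most $C$ by hypothesis. The error between $M_\varphi$ and this sampled multiplier is dominated by $\sup_{k,l}\ \sup_{(z,t),(z',t')\in A^n_k\times A^n_l}\vert\varphi(z,t)-\varphi(z',t')\vert$, which tends to $0$ by the uniform continuity of $\varphi$ on the compact set $L\times L$. Passing to the limit yields $\norm{(M_\varphi\ot\Id_E)(K_f)}_{S^p_\Omega(E)}\leq C\norm{K_f}_{S^p_\Omega(E)}$, and density concludes.

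The main obstacle lies in the reconstruction direction (2) $\Rightarrow$ (1), and within it two points deserve care. First, one must justify that the block-constant expectations $\mathcal{E}_n$ converge strongly to the identity on the \emph{vector-valued} space $S^p_\Omega(E)$; this is a vector-valued martingale convergence statement for $1\leq p<\infty$ and is exactly where the $\sigma$-finiteness of $\mu$ and the second countability of $\Omega$ enter, through a dominated convergence argument (compare the footnote to \eqref{Equa-transfer=}). Second, one must estimate, uniformly in the $E$-valued norm, the difference between $M_\varphi\ot\Id_E$ and the sampled discrete multiplier on block-constant kernels; this is where the continuity of $\varphi$, upgraded to uniform continuity on compact subsets of $\Omega\times\Omega$, is indispensable, and it is the reason the hypothesis is stated for continuous symbols. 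The completely positive and trace preserving nature of the embeddings and expectations makes the whole scheme insensitive to the operator space $E$, so that no estimate beyond the scalar case is actually needed once Proposition \ref{prop-tensorisation of CP maps} (2) is invoked.
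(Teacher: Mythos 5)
Your direction (1) $\Rightarrow$ (2) is correct and is essentially the argument the paper intends: the published text only says the proposition is a vector-valued version of \cite[Theorem 1.19]{LaS} ``obtained with a similar argument'', and that argument proceeds exactly as you do, by compressing $M_\varphi \ot \Id_E$ with the conditional expectation onto the blocks $A_1,\ldots,A_{m_\alpha}$ (a complete contraction on $S^p_\Omega(E)$ by Proposition \ref{prop-tensorisation of CP maps}), identifying the compression with the Schur multiplier whose symbol is the matrix of averages of $\varphi$ over $A_i \times A_j$, and letting the $A_i$ shrink to the $x_i$, where continuity of $\varphi$ and the fixed finite size $m_\alpha$ give convergence of the multiplier norms.

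The direction (2) $\Rightarrow$ (1) is where you depart from \cite{LaS}, and your argument has a genuine gap at its central step. You claim that the error between $M_\varphi \ot \Id_E$ and the sampled block multiplier is ``dominated by'' the oscillation $\sup_{k,l}\sup\vert\varphi(z,t)-\varphi(z',t')\vert$ of $\varphi$ over the cells. The error is an integral operator whose kernel $(\varphi-\tilde\varphi_n)g$ (with $\tilde\varphi_n$ the sampled block-constant symbol) does have small supremum norm, but for $p<2$ the $S^p$-norm of an integral operator is \emph{not} controlled by the supremum of its kernel, even on a fixed compact support: on $[0,1]^2$ with Lebesgue measure, the block-constant kernel whose matrix of values is $\epsi\sqrt{n}$ times the $n\times n$ Fourier matrix has supremum norm $\epsi$ but defines an operator of $S^1$-norm $\epsi\sqrt{n}$. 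Since in your scheme the partition refines ($n\to\infty$) precisely as the oscillation decreases, the bound you invoke does not close the argument; and in the vector-valued setting even the case $p\geq 2$ is not immediate, because $S^2(E)$ is not $\L^2(\Omega\times\Omega,E)$. A related inaccuracy: for $p<2$ a continuous compactly supported kernel need not define an element of $S^p_\Omega(E)$ at all, so your dense class must consist of finite sums $\sum_i g_i \ot h_i \ot e_i$ with $g_i,h_i \in C_c(\Omega)$. For such elements the gap can be repaired, using the Schur-product estimate $\norm{A\circ B}_{S^1}\leq \norm{A}_{S^2}\norm{B}_{S^2}$ applied to the oscillation kernel and to $K_{g_i\ot h_i}$, together with the contractive inclusion $S^1(E)\subset S^p(E)$; but this Hilbert--Schmidt argument is exactly the missing idea, not a routine verification. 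The proof the paper relies on avoids the issue altogether: following \cite[Theorem 1.19]{LaS}, one reduces (2) $\Rightarrow$ (1) by duality to proving $\vert\tr(M_\varphi(A)B)\vert \leq C\norm{A}_p\norm{B}_{p'}$ for $A,B$ finite-rank with legs in $C_c(\Omega)$, and then approximates $\mu$ vaguely by finitely supported probability measures carried by $\Supp(\mu)$, so that only convergence of scalar pairings, never an operator-norm estimate of an error term, is required. Two smaller points: second countability of $\Omega$ is not a hypothesis of the proposition (the footnote you cite concerns the transference result of \cite{CDS}, not this statement), and your case $p=\infty$, where no strong density is available, is left entirely to an unspecified weak* approximation.
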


\paragraph{Comparison between norms of powers and tensor powers of discrete Schur multipliers} We will use the useful following fundamental observation \cite[Lemma 3.3]{Arh3}. 

\begin{lemma}
\label{Prop Fell absorption} 
Suppose $1 \leq p \leq \infty$. Let $E$ be an operator space. Let $I$ be an index set equipped with the counting measure. For any regular Schur multiplier, $M_\phi \co S^p_I \to S^p_I$ and any positive integer $n \geq 1$ we have
\begin{equation}
\label{Majoration-norme-multiplicateur-1}
\bnorm{(M_\phi)^n \ot \Id_E}_{S^p_I(E) \to S^p_I(E)}
\leq \bnorm{(M_\phi)^{\ot n} \ot \Id_E}_{S^p(\B(\ell^2_I)^{\ot n},E) \to S^p(\B(\ell^2_I)^{\ot n},E)}.
\end{equation}
\end{lemma}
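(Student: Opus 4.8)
The plan is to realise the power $(M_\phi)^n$ as a \emph{corner} of the tensor power $(M_\phi)^{\ot n}$ through a diagonal embedding of the index set, and then to exploit the complete contractivity of corner maps on vector-valued noncommutative $\L^p$-spaces. First I would record that, under the canonical identification $(\ell^2_I)^{\ot n}\cong\ell^2_{I^n}$ sending $e_{i_1}\ot\cdots\ot e_{i_n}$ to $e_{(i_1,\ldots,i_n)}$, we have $\B(\ell^2_I)^{\ot n}\cong\B(\ell^2_{I^n})$, and under this identification $(M_\phi)^{\ot n}$ is the Schur multiplier $M_\Phi$ on $S^p_{I^n}$ whose symbol is the product $\Phi\big((i_1,\ldots,i_n),(j_1,\ldots,j_n)\big)=\prod_{k=1}^n \phi(i_k,j_k)$. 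Likewise $(M_\phi)^n$ is the Schur multiplier on $S^p_I$ with symbol $(i,j)\mapsto \phi(i,j)^n$.

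The key observation is then elementary. Let $\Delta\co I\to I^n$, $i\mapsto(i,\ldots,i)$, be the diagonal map and put $J=\Delta(I)\subseteq I^n$. Restricting the product symbol $\Phi$ to $J\times J$ gives exactly $\Phi\big((i,\ldots,i),(j,\ldots,j)\big)=\phi(i,j)^n$, that is, the symbol of $(M_\phi)^n$. In operator terms, let $P\in\B(\ell^2_{I^n})$ be the orthogonal projection onto $\ovl{\mathrm{span}}\{e_{(i,\ldots,i)}:i\in I\}$, so that the corner $P\B(\ell^2_{I^n})P$ is $*$-isomorphic to $\B(\ell^2_J)\cong\B(\ell^2_I)$ via the relabelling $\Delta$. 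The previous computation says precisely that compressing $M_\Phi$ to this corner yields $(M_\phi)^n$.

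Accordingly I would introduce the corner inclusion $\iota\co S^p_I\to S^p_{I^n}$, extending a matrix by zero off the $J\times J$ block, and the corner compression $\pi\co S^p_{I^n}\to S^p_I$, $x\mapsto PxP$ followed by the relabelling $\Delta^{-1}$. Both are the $\L^p$-realisations of normal completely positive maps between the (approximately finite-dimensional) algebras $\B(\ell^2_I)$ and $\B(\ell^2_{I^n})$ that are simultaneously complete contractions at the von Neumann algebra level and at the $\L^1$-level, so by Proposition \ref{prop-tensorisation of CP maps}(2) the tensorised maps $\iota\ot\Id_E$ and $\pi\ot\Id_E$ are complete contractions between the corresponding vector-valued spaces $S^p_I(E)$ and $S^p_{I^n}(E)$. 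Since by construction $(M_\phi)^n\ot\Id_E=(\pi\ot\Id_E)\big((M_\phi)^{\ot n}\ot\Id_E\big)(\iota\ot\Id_E)$, the regularity of $M_\phi$ guaranteeing that the middle factor is a well-defined bounded operator, submultiplicativity of the operator norm gives $\bnorm{(M_\phi)^n\ot\Id_E}_{S^p_I(E)\to S^p_I(E)}\leq\bnorm{(M_\phi)^{\ot n}\ot\Id_E}_{S^p_{I^n}(E)\to S^p_{I^n}(E)}$, which is the claim after identifying $S^p_{I^n}(E)$ with $S^p(\B(\ell^2_I)^{\ot n},E)$.

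The main obstacle I anticipate is bookkeeping rather than conceptual: one must verify carefully that $\iota$ and $\pi$ are genuinely complete contractions on vector-valued $\L^p$ for the whole range $1\leq p\leq\infty$ and for an arbitrary (possibly infinite) index set $I$. The cleanest route is the one above through Proposition \ref{prop-tensorisation of CP maps}(2), using that $\B(\ell^2_I)$ and $\B(\ell^2_{I^n})$ are approximately finite-dimensional; alternatively one can observe that the compression to the $J\times J$ block is itself a Schur multiplier with the rank-one positive symbol $1_J\ot 1_J$, hence completely positive and regular of norm one, which makes the contractivity of $\pi\ot\Id_E$ transparent. One should also check that the identifications $(\ell^2_I)^{\ot n}\cong\ell^2_{I^n}$ and $\ell^2_J\cong\ell^2_I$ are compatible with the Schur multiplier structure, which is immediate once the symbols are written out.
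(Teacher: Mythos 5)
Your proof is correct and rests on exactly the same mechanism as the paper's: the diagonal embedding $e_{ij} \mapsto e_{ij}^{\ot n} = e_{(i,\ldots,i),(j,\ldots,j)}$, under which the tensor power $(M_\phi)^{\ot n}$ compresses to $(M_\phi)^n$. The only cosmetic difference is that the paper packages the corner compatibility as the isometric identity \eqref{Absorption} and computes directly on finitely supported elements, whereas you split it into two complete contractions $\iota$ and $\pi$ justified by Proposition \ref{prop-tensorisation of CP maps} and conclude by submultiplicativity; these are two renderings of the same argument.
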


\paragraph{$\K$-convexity and $\OK$-convexity} If $X$ is a $\K$-convex Banach space, we introduce the constant of $\K$-convexity of $X$ by
$$
\K(X)
\ov{\textrm{def}}=\norm{P_2 \ot \Id_X}_{\L^2(\Omega_0,X) \to \L^2(\Omega_0,X)}.
$$
It is well-known, e.g. \cite[Theorem 1.12]{DJT} that the map \eqref{Projection-Rademacher} extends to a bounded projection $P_p \co \L^p(\Omega_0) \to \L^p(\Omega_0)$ and that a Banach space $X$ is $\K$-convex if and only if $P_p \ot \Id_X$ extends to a bounded operator on $\L^p(\Omega_0,X)$. Moreover, in this case, we have for some universal constants $0<A_p<B_p$
\begin{equation}
\label{K-convex-Lp}
A_p\, \K(X)
\leq \bnorm{P_p \ot \Id_X}_{\L^p(\Omega_0,X) \to \L^p(\Omega_0,X)} 
\leq B_p\, \K(X).	
\end{equation}
We say that an operator space $E$ is $\OK$-convex if the vector-valued Schatten space $S^p(E)$ is $\K$-convex for some (equivalently all) $1<p<\infty$.

\begin{lemma}
\label{Lemma-K-convex}
Let $1<p<\infty$ and let $E$ be an $\OK$-convex operator space. There exists a constant $C>0$ such that whenever $M$ is an approximately finite-dimensional von Neumann algebra equipped with a faithful normal semifinite trace then the space $\L^p(M,E)$ is $\K$-convex and 
$$
\K(\L^p(M,E))
\leq C.
$$
\end{lemma}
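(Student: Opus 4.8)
The plan is to reduce the estimate to the finite-dimensional pieces of $M$, where the vector-valued Schatten classes $S^p_n(E)$ appear and the $\OK$-convexity of $E$ enters directly. By the equivalence \eqref{K-convex-Lp} it is enough to bound $\bnorm{P_p \ot \Id_{\L^p(M,E)}}$ on $\L^p(\Omega_0,\L^p(M,E))$ by a constant depending only on $p$ and $E$, after which $\K(\L^p(M,E)) \leq A_p^{-1}\bnorm{P_p \ot \Id_{\L^p(M,E)}}$. Throughout, the guiding principle is that $P_p$ acts on the factor $\L^p(\Omega_0)$ while every operator induced by a map on $M$ (or by $\Id_E$) acts on the factor $\L^p(M,E)$, so the two commute. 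In particular, for any $1$-complemented subspace $Y \subseteq Z$ with contractive inclusion $\iota$ and contractive projection $Q$, the identity $P_p \ot \Id_Y = (\Id_{\L^p(\Omega_0)} \ot Q)(P_p \ot \Id_Z)(\Id_{\L^p(\Omega_0)} \ot \iota)$ holds and yields $\bnorm{P_p \ot \Id_Y} \leq \bnorm{P_p \ot \Id_Z}$.

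First I would settle the finite-dimensional case. If $N = \bigoplus_k \Mat_{n_k}$ carries the trace $\sum_k c_k \tr_{n_k}$ with $c_k>0$, then rescaling the $k$-th block by $c_k^{1/p}$ is an isometry, so $\L^p(N,E)$ is isometrically isomorphic to the $\ell^p$-direct sum $\bigoplus_k^{(p)} S^p_{n_k}(E)$; placing the blocks on the diagonal embeds this isometrically into $S^p(E)$, and the block-diagonal conditional expectation (a trace-preserving unital completely positive map, hence a complete contraction on $\L^p$ by Proposition \ref{prop-tensorisation of CP maps}) is a contractive projection onto its range. Thus $\L^p(N,E)$ is $1$-complemented in $S^p(E)$. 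Since $E$ is $\OK$-convex, $S^p(E)$ is $\K$-convex, so $\bnorm{P_p \ot \Id_{S^p(E)}} \leq B_p\,\K(S^p(E)) =: C_0 < \infty$, and the previous paragraph gives $\bnorm{P_p \ot \Id_{\L^p(N,E)}} \leq C_0$ for every finite-dimensional $N$, with $C_0$ independent of $N$.

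Next I would localise a general (semifinite) approximately finite-dimensional $M$. Choosing an increasing net $(q_j)$ of $\tau$-finite projections with $q_j \uparrow 1$ and, inside each finite algebra $q_jMq_j$, finite-dimensional subalgebras $N_{j,k}$ equipped with trace-preserving normal conditional expectations, I compose the compressions $x \mapsto q_jxq_j$ with these expectations to obtain maps $T_i \co M \to N_i$ ($i=(j,k)$) that are complete contractions both on $M$ and on $\L^1(M)$. By Proposition \ref{prop-tensorisation of CP maps} each $R_i \ov{\textrm{def}}= T_i \ot \Id_E$ is a complete contraction $\L^p(M,E) \to \L^p(N_i,E)$; the inclusions $\iota_i \co \L^p(N_i,E) \to \L^p(M,E)$ are contractive, and $\iota_iR_i \to \Id$ strongly on $\L^p(M,E)$ (the scalar convergence $\iota_iT_i \to \Id$ on $\L^p(M)$ upgrades to the vector-valued setting by uniform boundedness and density of $\L^p(M)\ot E$). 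Using the commutation of $P_p$ with $R_i$ and $\iota_i$, for $f$ in the dense algebraic tensor product $\L^p(\Omega_0) \ot \L^p(M,E)$ I then estimate, writing $\Id$ for $\Id_{\L^p(\Omega_0)}$,
\[
\bnorm{(P_p \ot \Id_{\L^p(M,E)})(f)}
= \lim_i \bnorm{(\Id \ot \iota_i)(P_p \ot \Id_{\L^p(N_i,E)})(\Id \ot R_i)(f)}
\leq C_0\,\limsup_i \bnorm{(\Id \ot R_i)(f)}
\leq C_0\,\norm{f},
\]
where the first equality uses the strong convergence of $\Id \ot \iota_iR_i$ together with the commutation, the second inequality uses $\norm{\iota_i}\leq 1$ and the finite-dimensional bound $\bnorm{P_p \ot \Id_{\L^p(N_i,E)}}\leq C_0$, and the last uses $\norm{R_i}\leq 1$. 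Hence $\bnorm{P_p \ot \Id_{\L^p(M,E)}} \leq C_0$ and $\K(\L^p(M,E)) \leq A_p^{-1}C_0 =: C$, a constant depending only on $p$ and $E$.

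The main obstacle I expect is the localisation step: for an arbitrary approximately finite-dimensional $M$ that may be non-$\sigma$-finite and genuinely semifinite (so the unit need not be $\tau$-finite), one must produce a net of finite-dimensional subalgebras together with maps that are simultaneously complete contractions on $M$ and on $\L^1(M)$ and whose vector-valued extensions converge strongly to the identity; the passage to the finite corners $q_jMq_j$ is precisely what makes finite-dimensional conditional expectations and the hypotheses of Proposition \ref{prop-tensorisation of CP maps} available. By contrast, the role of $\OK$-convexity is confined to the finite-dimensional step, where it furnishes the single uniform constant $C_0 = B_p\,\K(S^p(E))$; the rest is a soft commutation-and-approximation argument.
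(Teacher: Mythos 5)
Your proof is correct, but it follows a genuinely different route from the paper's. The paper's proof is essentially two lines of operator-space theory: $\OK$-convexity of $E$, read through the Fubini identity $S^p(\L^p(\Omega_0,E))=\L^p(\Omega_0,S^p(E))$, says precisely that $P_p \ot \Id_E$ is \emph{completely} bounded on the operator space $\L^p(\Omega_0,E)$, and Pisier's tensorization results [(3.1) and (3.6) of \cite{Pis5}] (completely bounded maps tensorize with $\Id_{\L^p(M)}$ for hyperfinite $M$) then give at once that $P_p \ot \Id_{\L^p(M,E)}$ is bounded by $\norm{P_p \ot \Id_E}_{\cb,\L^p(\Omega_0,E)\to\L^p(\Omega_0,E)}$, uniformly in $M$; the conclusion follows from \eqref{K-convex-Lp}. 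You instead stay at the Banach-space level: you show that $\L^p(N,E)$ is contractively $1$-complemented in $S^p(E)$ for finite-dimensional $N$ (trace rescaling, block-diagonal embedding and conditional expectation, all tensorized with $\Id_E$ via Proposition \ref{prop-tensorisation of CP maps}), so the uniform bound $C_0=B_p\,\K(S^p(E))$ passes to every $\L^p(N,E)$, and you then run a strong-limit approximation of a general approximately finite-dimensional $M$ by finite-dimensional subalgebras of $\tau$-finite corners. What the paper's route buys is brevity: all the direct-limit structure theory of hyperfinite semifinite algebras is delegated to \cite{Pis5}. What your route buys is transparency: it isolates that only the finite-dimensional bound plus soft approximation is needed, with no cb-norm bookkeeping beyond Proposition \ref{prop-tensorisation of CP maps}. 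The trade-off is that your localisation step is exactly the mechanism underlying the results of \cite{Pis5} that the paper cites, so you are in effect re-proving the special case of Pisier's tensorization theorem needed here; moreover, for a non-$\sigma$-finite $M$ the construction of your net, while standard, is where the real technical content sits, and you could shortcut it by invoking the decomposition $M = \ovl{\bigcup_\alpha M_\alpha}^{\w*}$ and the density statement $\L^p(M,E)=\ovl{\bigcup_\alpha \L^p(M_\alpha,\tau_\alpha,E)}$ from [Theorem 3.4 of \cite{Pis5}], which the paper itself uses in the proof of Proposition \ref{Finitely}, together with the eventual fixing property $\iota_i R_i|_{\L^p(N_{i_0},E)}=\Id$ for $i \geq i_0$ and uniform boundedness.
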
 

\begin{proof}
Since the operator space $E$ is $\OK$-convex, the Banach space $S^p(E)$ is $\K$-convex. Hence the map $P_p \ot \Id_E \co \L^p(\Omega_0,E) \to \L^p(\Omega_0,E)$ is completely bounded. Using \cite[(3.1) and (3.6)]{Pis5}, this complete boundedness implies that $P_p \ot \Id_{\L^p(M,E)} \co \L^p(\Omega_0,\L^p(M,E)) \to \L^p(\Omega_0,\L^p(M,E))$ is a well-defined bounded operator with
$$ 
\norm{P_p \ot \Id_{\L^p(M,E)}}_{\L^p(\Omega_0,\L^p(M,E))\to \L^p(\Omega_0,\L^p(M,E))}
\leq \norm{P_p \ot \Id_E}_{\cb,\L^p(\Omega_0,E) \to \L^p(\Omega_0,E)}.
$$
Using \eqref{K-convex-Lp}, the end of the proof is obvious.
\end{proof}

\paragraph{Projections and $\K$-convexity} We will use the following result of Pisier \cite[Theorem 3.1]{Pis3}.

\begin{thm}
\label{Th-Pisier}
Let $X$ be a $\K$-convex Banach space. There exists a constant $C>0$ only depending
on the $\K$-convexity constant $\K(X)$ such that for any integer $n \geq 1$ and for any $n$-tuple $(P_1,\ldots,P_n)$ of mutually commuting contractive projections on $X$ we have
$$
\norm{\sum_{k=1}^n(\Id_X-P_k) \prod_{1\leq j\not=k\leq n} P_j}_{X \to X}
\leq C.
$$
\end{thm}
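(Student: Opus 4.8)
The plan is to recognize the operator as the \emph{degree-one projection} attached to the Boolean resolution of the identity generated by the commuting projections, and then to transfer to it the one concrete instance of such a projection whose norm we control a priori, namely the Rademacher projection on the hypercube, whose norm is $\K(X)$ by the very definition of $\K$-convexity. Concretely, for each subset $A\subseteq\{1,\dots,n\}$ I would introduce $R_A=\prod_{k\in A}(\Id_X-P_k)\prod_{k\notin A}P_k$. Since the $P_k$ commute and are idempotent, the $R_A$ are mutually orthogonal idempotents with $\sum_A R_A=\Id_X$ and $P_k=\sum_{A:\,k\notin A}R_A$. A direct expansion then shows that the operator in the statement is exactly $\Pi_1:=\sum_{|A|=1}R_A$. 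In the model case where $X$-valued functions on $\{-1,1\}^n$ are acted on by $P_k=\tfrac12(\Id+\tau_k)$, with $\tau_k$ the (isometric) flip of the $k$-th coordinate, one checks that $R_A$ is the projection onto the Walsh component $w_A$ and that $\Pi_1$ is precisely the Rademacher projection $P\ot\Id_X$; hence $\norm{\Pi_1}\leq\K(X)$ in that case. The whole point is to prove that this model estimate is universal among commuting contractive projections.

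The engine is a transference/dilation argument. I would sandwich the model Rademacher projection between two maps built out of the contractions $Q_k(t)=\e^{-t}\Id_X+(1-\e^{-t})P_k$; these are genuine contractions, being convex combinations of the contractions $\Id_X$ and $P_k$, and they satisfy $\prod_k Q_k(t)=\sum_A\e^{-t|A|}R_A$. Integrating such products (and their siblings $\prod_k(s_k\Id_X+(1-s_k)P_k)$, contractive for $s_k\in[0,1]$) against suitable kernels in the parameters, and reading off the first-order content through the Rademacher projection, one would express $\Pi_1=v\circ(P\ot\Id_X)\circ u$ with $u$ and $v$ bounded by constants independent of $n$. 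Feeding in the $\K$-convexity bound $\norm{P\ot\Id_X}\leq\K(X)$ (equivalently \eqref{K-convex-Lp}) then yields $\norm{\Pi_1}\leq C(\K(X))$.

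The main obstacle is precisely the construction of bounded transfer maps $u,v$. The naive choice encodes $x$ as the Walsh series $\epsi\mapsto\sum_A w_A(\epsi)R_A x=\prod_k\big(\epsi_k\Id_X+(1-\epsi_k)P_k\big)x$, which would make the Rademacher projection land on $\Pi_1$ on the nose; but this uses the operators $2P_k-\Id_X$, whose norms may be as large as $3$, so the encoding is unbounded (exponentially in $n$) and the transfer collapses. This is the crux, and it is where $\K$-convexity is indispensable: one must instead work with the contractive semigroup above, so that the encoding is automatically bounded, and recover the degree-one projection by an averaging/subordination whose $n$-independence comes not from any scalar quadrature (which would cost a factor growing with $n$) but from the cancellation built into the boundedness of the Rademacher/Gaussian projection itself.

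An alternative framing, with the same difficulty at its heart, is to view $\Pi_1$ as the Riesz spectral projection $\frac{1}{2\pi\i}\oint_{|z-1|=1/2}(z-N)^{-1}\,\d z$ of the number operator $N=\sum_{k=1}^n(\Id_X-P_k)=\sum_A|A|\,R_A$, whose spectrum is contained in $\{0,1,\dots,n\}$ so that the circle $|z-1|=\tfrac12$ isolates the eigenvalue $1$. In this picture the estimate would follow from a resolvent bound on that circle, derived from the sectoriality of $N$ supplied by $\K$-convexity; but establishing that resolvent control with a constant depending only on $\K(X)$ is again exactly the hard, $n$-uniform step, so either route ultimately rests on the same use of the boundedness of the Rademacher projection.
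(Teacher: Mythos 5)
Your algebraic reduction is correct as far as it goes: the idempotents $R_A=\prod_{k\in A}(\Id_X-P_k)\prod_{k\notin A}P_k$ are mutually orthogonal, sum to $\Id_X$, the operator in the statement is exactly $\Pi_1=\sum_{|A|=1}R_A$, and in the hypercube model $P_k=\tfrac12(\Id+\tau_k)$ this is the Rademacher projection, whose norm is controlled by $\K(X)$ via \eqref{K-convex-Lp}. But the proof stops precisely where the theorem begins. The factorization $\Pi_1=v\circ(P\ot\Id_X)\circ u$ with $\norm{u},\norm{v}$ independent of $n$ is never constructed --- you say yourself that ``the main obstacle is precisely the construction of bounded transfer maps $u,v$'' --- and that factorization (or the equivalent $n$-uniform resolvent bound in your second framing) \emph{is} the content of the statement; everything else is bookkeeping. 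Worse, the step cannot be completed by the kind of averaging you invoke. The bounded encodings at your disposal are built from the contractions $\prod_k\big(P_k+r_k(\Id_X-P_k)\big)$ with $r_k\in[0,1]$; taking $u(x)(\varepsilon)=\prod_k\big(P_k+s\tfrac{1+\varepsilon_k}{2}(\Id_X-P_k)\big)x$ and $v(f)=\int \prod_k\big(P_k+t\tfrac{1+\varepsilon_k}{2}(\Id_X-P_k)\big)f(\varepsilon)\,\d\varepsilon$, a direct computation gives $v\circ(P\ot\Id_X)\circ u=\sum_A|A|\,r^{|A|}R_A$ with $r=st/4$, and averaging over the scalings only produces operators $\sum_A\phi(|A|)R_A$ with $\phi(m)=\int_0^{1/4}mr^m\,\d\mu(r)$ for a finite signed measure $\mu$. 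No such measure satisfies $\int mr^m\,\d\mu(r)=\delta_{m,1}$ for all $m\geq 0$: setting $\d\sigma=r\,\d\mu$ and then $\d\tau=r\,\d\sigma$, the conditions force every moment of $\tau$ to vanish, hence $\tau=0$, hence $\sigma=0$, contradicting $\int\d\sigma=1$. So any quadrature of this type must let $\norm{\mu}$ --- hence the constant --- grow with $n$, exactly as you suspected; the phrase ``cancellation built into the boundedness of the Rademacher projection'' is therefore not a proof step but a placeholder for the missing argument. The Riesz-projection route collapses at the same point, as you also concede: sectoriality of $N=\sum_k(\Id_X-P_k)$ controls the resolvent only outside a sector around $\R_+$, while the contour $|z-1|=\tfrac12$ crosses the positive axis at $\tfrac12$ and $\tfrac32$, where no bound is available.

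For calibration, note that the paper does not prove this statement at all: it is imported verbatim from Pisier \cite[Theorem 3.1]{Pis3}, and it is used in the proof of Theorem \ref{Main-result} as a black box. Pisier's own proof is not a soft transference from the cube; it rests on the holomorphic-semigroup machinery of \cite{Pis3} (the same circle of ideas as Theorem \ref{Th-de-Beurling} and Lemma \ref{Lemma-of-Pisier-on-projections}, which constitute the hard part of that paper) applied to the contraction semigroup $e^{-tN}=\prod_k\big(e^{-t}\Id_X+(1-e^{-t})P_k\big)$, whose spectral idempotents are precisely your $R_A$. Your proposal correctly identifies all of the relevant objects, but it replaces the analytic core of the argument by an assertion, so it does not constitute a proof.
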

Moreover, we recall the following lemma \cite[Lemma 1.5]{Pis3} (see also \cite[Lemma 13.12]{DJT}). Note that \cite[Lemma 1.7]{Pis3} is used here.

\begin{lemma}
\label{Lemma-of-Pisier-on-projections} 
Let $n \geq 1$ be an integer. Let $X$ be a Banach space such that, for some $\lambda>1$, $X$ does not contain any subspace $\lambda$-isomorphic to $\ell_1^{n+1}$. Then there exist a real number $0<\rho<2$ (depending only on $\lambda$ and $n$) such that if $P_1,\ldots, P_n$ is any finite collection of mutually commuting norm one projections on $X$, then
\begin{equation*}
\Bgnorm{\prod_{1 \leq k \leq n} (\Id_{X}-P_k)}_{X\to X}
\leq \rho^n.
\end{equation*}
\end{lemma}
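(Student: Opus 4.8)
The plan is to deduce the estimate from a quantitative extraction of an $\ell_1^{n+1}$-subsystem, following the strategy behind \cite[Lemma 1.7]{Pis3}. First I would record the elementary algebraic skeleton produced by the commutativity of the $P_k$'s. Writing $S=\prod_{k=1}^n(\Id_X-P_k)$ and, for $0 \leq j \leq n$, introducing the partial products $S_j=\prod_{k=j+1}^n(\Id_X-P_k)$ (so that $S_0=S$ and $S_n=\Id_X$), I would set $T_0=S$ and $T_j=P_jS_j$ for $1 \leq j \leq n$. Using that the $P_k$ commute, the telescoping computation $S_{j-1}=(\Id_X-P_j)S_j=S_j-P_jS_j$ gives $S_j-S_{j-1}=T_j$, hence the resolution $\sum_{j=0}^n T_j=\Id_X$, while the relations $P_j(\Id_X-P_j)=0$ yield $T_iT_j=\delta_{ij}T_j$. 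Thus the $T_j$ form a family of mutually orthogonal idempotents summing to the identity, with $\norm{T_0}=\norm{S}$ and $\norm{T_j} \leq 2^{n-j}$.

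Next I would argue by contradiction against the hypothesis. Fix a unit vector $x \in X$ nearly norming $S$; the telescoping identity reads $x=\sum_{j=0}^n T_jx$, and the orthogonality relations $T_m(T_jx)=\delta_{mj}T_jx$ make the idempotents $T_m$ act as coordinate operators separating the $n+1$ vectors $T_0x,\ldots,T_nx$ sitting along the chain of partial products. The heart of the matter is that the norm $\norm{S}$ is bounded above by $2^n$ precisely because $S$ is a product of $n$ complements of norm one projections, and that near-maximality of this product norm is a rigid phenomenon: it cannot occur unless $X$ contains a subspace $\lambda$-isomorphic to $\ell_1^{n+1}$. This is the step where \cite[Lemma 1.7]{Pis3} is invoked. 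It converts the quantitative defect $2^n-\norm{S}$ into a quantitative bound on the isomorphism constant of an $\ell_1^{n+1}$-system extracted from the orbit of $x$ under the partial products, with the isomorphism constant tending to $1$ as $\norm{S} \to 2^n$. The elementary model $n=1$, where the skew norm one projection $P(a,b)=(a+b,0)$ on $\ell_1^2$ realizes $\norm{\Id-P}=2$ and the pair $\{x,Px\}$ spans an isometric $\ell_1^2$, shows both that $2^n$ is the right extremal value and that $\ell_1^{n+1}$ genuinely appears at the extreme.

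Finally I would package this as a uniform gap. Since $X$ contains no subspace $\lambda$-isomorphic to $\ell_1^{n+1}$, the previous step shows that $\norm{\prod_{k=1}^n(\Id_X-P_k)}$ cannot approach $2^n$; more precisely it stays below a bound $\delta_n<2^n$ depending only on $\lambda$ and $n$ and uniform over all $n$-tuples of mutually commuting norm one projections on $X$. Setting $\rho=\delta_n^{1/n}$ then gives $0<\rho<2$, depending only on $\lambda$ and $n$, and $\norm{\prod_{k=1}^n(\Id_X-P_k)} \leq \rho^n$, as desired.

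I expect the main obstacle to be precisely the extraction step. The orthogonal-idempotent skeleton $\sum_j T_j=\Id_X$ only yields upper control of the individual coordinates $T_jx$, an unconditional-basis type estimate; producing the genuinely additive (hence $\ell_1$) lower bounds that certify a $\lambda$-isomorphic copy of $\ell_1^{n+1}$, with constants tracking the defect $2^n-\norm{S}$, is exactly the delicate stability statement one must borrow from \cite[Lemma 1.7]{Pis3}. It is also what forces the threshold to be $\ell_1^{n+1}$ rather than $\ell_1^{n}$, since the $n+1$ extremal vectors arise from the $n+1$ terms of the telescoping decomposition of $x$.
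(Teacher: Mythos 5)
Your algebraic skeleton is correct (the $T_j$ really are mutually orthogonal commuting idempotents summing to $\Id_X$), your final packaging step is fine, and, like the paper itself, you ultimately defer the heart of the matter to Pisier: the paper gives no proof but quotes \cite[Lemma 1.5]{Pis3} and \cite[Lemma 13.12]{DJT}. The genuine gap is your identification of the $\ell_1^{n+1}$-system. You propose to extract it from the telescoping coordinates $T_0x,\ldots,T_nx$, but this family, after normalization, is \emph{provably not} an almost-isometric copy of $\ell_1^{n+1}$, even in the extremal situation, so no stability lemma (Pisier's Lemma 1.7 included) can certify it; the obstruction is structural, not a matter of delicacy. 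Concretely, for $n=1$ take $u_0=\frac12(\Id_X-P)x$ and $u_1=Px$: then $u_0+u_1=\frac12(x+Px)$, so $\norm{u_0+u_1}\leq 1$ no matter how close $\norm{(\Id_X-P)x}$ is to $2$; the sign pattern $(+,+)$ can never exceed $1$, while an almost-isometric $\ell_1^2$-pair would require it to be close to $2$. The same collapse occurs for every $n$: for $n=2$ the all-plus combination $\frac14T_0x+\frac12T_1x+T_2x$ equals $\frac14x+\frac34P_2x+\frac14P_1(\Id_X-P_2)x$ and has norm at most $\frac32$, far from $3$. Note that your own $n=1$ model already points at the correct family: there the isometric $\ell_1^2$-pair is $\{x,Px\}$, \emph{not} $\{(\Id_X-P)x,Px\}$.

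What the cited proof actually uses are products of the projections themselves, not the mixed products $T_j=P_j\prod_{k>j}(\Id_X-P_k)$. Write $P_A=\prod_{k\in A}P_k$ (norm at most one) and expand $\prod_{k=1}^n(\Id_X-P_k)=\sum_{A\subseteq\{1,\ldots,n\}}(-1)^{\vert A\vert}P_A$. If $\norm{Sx}>2^n(1-\delta)$ for a unit vector $x$ and $g$ is a norming functional for $Sx$, then the $2^n$ terms $(-1)^{\vert A\vert}g(P_Ax)$, each at most $1$, sum to more than $2^n-2^n\delta$, hence every single one is at least $1-2^n\delta$. The $\ell_1^{n+1}$-system is the \emph{chain} $w_j=P_{A_j}x$ with $A_j=\{j+1,\ldots,n\}$, i.e. $w_n=x,\ w_{n-1}=P_nx,\ \ldots,\ w_0=P_1\cdots P_nx$, and the certifying functionals are $\pm g\circ P_B$ for $B\subseteq\{1,\ldots,n\}$: since $P_BP_{A_j}=P_{B\cup A_j}$, the value $(g\circ P_B)(w_j)$ has sign $(-1)^{\,n-j+\vert B\cap\{1,\ldots,j\}\vert}$ up to an error $2^n\delta$, and choosing $B=\{j:\theta_j=\theta_{j-1}\}$ realizes any prescribed pattern $(\theta_j)$ up to a global sign. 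Consequently $\norm{\sum_j a_jw_j}\geq(1-2^n\delta)\sum_j\vert a_j\vert$ for all scalars, giving a $(1-2^n\delta)^{-1}$-isomorphic copy of $\ell_1^{n+1}$ and the desired contradiction once $\delta$ is small in terms of $\lambda$. So the threshold $n+1$ is the length of a maximal chain of subsets of $\{1,\ldots,n\}$, not the number of terms in your telescoping decomposition, and a correct write-up of your proposal would have to replace the $T_jx$ by this chain throughout.
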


We say that $Y$ is representable in $X$ if for any $\epsi >0$, each finite- dimensional subspace of $Y$ is $(1+\epsi)$-isomorphic to a subspace of $X$ \cite[page 175]{DJT}.

\begin{prop}
\label{Finitely}
Suppose $1 \leq p<\infty$. Let $E$ be an operator space. Let $M$ be von Neumann algebra equipped with a normal semifinite faithful trace. Then $\L^p(M,E)$ is finitely representable in $S^p(E)$.
\end{prop}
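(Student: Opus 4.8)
The plan is to verify the definition of finite representability directly: I will show that every finite-dimensional subspace $F \subseteq \L^p(M,E)$ is $(1+\epsi)$-isomorphic to a subspace of $S^p(E)$, by squeezing $F$ into $\L^p$ of a finite-dimensional subalgebra and then embedding that isometrically into $S^p(E)$. First I would exploit density: since $(M \cap \L^p(M)) \ot E$ is dense in $\L^p(M,E)$ and $F$ is finite-dimensional, a standard perturbation argument lets me assume $F = \mathrm{span}(x_1,\dots,x_k)$ with each $x_i$ a finite sum $\sum_j a_{ij} \ot e_{ij}$, where $a_{ij} \in M \cap \L^p(M)$ and $e_{ij} \in E$. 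The finitely many $a_{ij}$ have left and right supports dominated by a single projection $e \in M$ with $\tau(e)<\infty$, so all the $x_i$ lie in $\L^p(N,E)$ for the finite von Neumann algebra $N = eMe$. The inclusion $\L^p(N,E) \to \L^p(M,E)$ is a complete isometry: applying Proposition \ref{prop-tensorisation of CP maps}(2) both to the $*$-homomorphism $N \hookrightarrow M$ and to the unital trace-preserving completely positive compression $x \mapsto exe$ from $M$ onto $N$ exhibits the inclusion as a complete contraction that is left-invertible by a complete contraction, hence isometric.

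Next I would approximate inside $N$. As $N$ is approximately finite-dimensional with finite trace, there is an increasing net of finite-dimensional $*$-subalgebras $N_m \subseteq N$ together with trace-preserving conditional expectations $\E_m \co N \to N_m$ such that $\E_m(a) \to a$ in $\L^p(N)$ for every $a$. Each $\E_m \ot \Id_E$ is a complete contraction by Proposition \ref{prop-tensorisation of CP maps}(2), and since $x_i = \sum_j a_{ij}\ot e_{ij}$ is a finite tensor, $(\E_m \ot \Id_E)(x_i) = \sum_j \E_m(a_{ij})\ot e_{ij} \to x_i$ in $\L^p(N,E)$. Hence, for $m$ large, $F_m := \mathrm{span}\big((\E_m\ot\Id_E)(x_i)\big) \subseteq \L^p(N_m,E)$ is $(1+\epsi)$-isomorphic to $F$ by perturbation. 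Finally, writing $N_m = \bigoplus_l \M_{n_l}$ (with a possibly weighted trace, which only rescales each block isometrically), I get $\L^p(N_m,E) = \big(\bigoplus_l S^p_{n_l}(E)\big)_{\ell^p}$, and the block-diagonal map placing the summands on mutually orthogonal corners of $\B(\ell^2)$ embeds this space completely isometrically into $S^p(E)$: the diagonal-block compression of $\B(\ell^2)$ onto $\bigoplus_l \M_{n_l}$ is a trace-preserving conditional expectation, so Proposition \ref{prop-tensorisation of CP maps}(2) again makes the inclusion a complete contraction with a one-sided complete-contraction inverse, hence a complete isometry. Chaining $F \cong_{1+\epsi} F_m \hookrightarrow S^p(E)$ yields the statement.

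The essential ingredient — and where I expect the real content to sit — is the finite-dimensional approximation of $N = eMe$ by matrix subalgebras with $\L^p$-convergent trace-preserving conditional expectations; everything else is density-and-perturbation bookkeeping. This step is precisely what the approximate finite-dimensionality of the algebras in play provides, and it is also what makes Proposition \ref{prop-tensorisation of CP maps}(2) applicable at the localization step. I would therefore be careful to record at the outset that the relevant von Neumann algebras here are approximately finite-dimensional, so that the martingale-type $\L^p$-convergence of $(\E_m)$ and the isometric corner embeddings into $S^p(E)$ are all legitimately available; granting this, the argument reduces cleanly to the finite-dimensional Schatten spaces $S^p_{n}(E) \subseteq S^p(E)$.
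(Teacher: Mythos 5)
There is one step in your argument that fails as written: the claim that the finitely many $a_{ij}\in M\cap\L^p(M)$ have left and right supports dominated by a single projection $e$ with $\tau(e)<\infty$. This is false in general: in $M=\B(\ell^2)$ with its standard trace, the operator $a=\sum_{n\geq 1}2^{-n}e_{nn}$ belongs to $M\cap\L^p(M)$ for every $1\leq p<\infty$, yet its support is the identity, which has infinite trace. The point is not cosmetic, because the rest of your proof hinges on the corner $N=eMe$ being a \emph{finite} von Neumann algebra: without a finite trace on $N$ there are no trace-preserving conditional expectations $\E_m$ onto finite-dimensional subalgebras, and the martingale-type $\L^p$-convergence you invoke is unavailable.

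The repair is a further perturbation rather than a new idea. For $a\in M\cap\L^p(M)$ the spectral projections $e^{\epsi}=\chi_{[\epsi,\infty)}(|a|)$ and $f^{\epsi}=\chi_{[\epsi,\infty)}(|a^*|)$ \emph{do} have finite trace, since $\tau\big(\chi_{[\epsi,\infty)}(|a|)\big)\leq \epsi^{-p}\,\tau(|a|^p)<\infty$, and $f^{\epsi}ae^{\epsi}\to a$ in $\L^p(M)$ as $\epsi\to 0$; applying this to the finitely many $a_{ij}$, taking $e$ to dominate the resulting projections, and perturbing the basis of $F$ once more places $F$, up to a $(1+\epsi)$-isomorphism, inside $\L^p(eMe,E)$ with $\tau(e)<\infty$. (Alternatively one can quote \cite[Theorem 3.4]{Pis5}, which is exactly what the paper does: it writes $M=\ovl{\bigcup_\alpha M_\alpha}^{\w*}$ with each $\tau|_{M_\alpha}$ finite and gets $\L^p(M,E)=\ovl{\bigcup_\alpha \L^p(M_\alpha,\tau_\alpha,E)}$ for free.) With this repair, your proof is complete and is essentially the paper's route with the details filled in: the paper performs the same basis-perturbation reduction to a finite-trace subalgebra and then leaves the $(1+\epsi')$-representation of $\L^p(M_\alpha,E)$ in $S^p(E)$ to the reader, pointing to case 1 of the proof of \cite[Theorem 3.24]{ArK}; your approximation by finite-dimensional subalgebras via $\E_m\ot\Id_E$ together with the completely isometric block-diagonal embedding of $\L^p\big(\bigoplus_l \Mat_{n_l},E\big)$ into $S^p(E)$ (both justified by two applications of Proposition \ref{prop-tensorisation of CP maps}) is precisely the content of that omitted step. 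You are also right to insist that $M$ be approximately finite-dimensional throughout: the statement of Proposition \ref{Finitely} does not say so explicitly, but this is implicit, since $\L^p(M,E)$ is only defined for such $M$ and Proposition \ref{prop-tensorisation of CP maps} requires it.
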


\begin{proof}
We have $M = \ovl{\bigcup_\alpha M_\alpha}^{\w*}$ for some subalgebras $M_\alpha$ equipped with a finite trace $\tau_\alpha = \tau|_{M_\alpha}$. By \cite[Theorem 3.4]{Pis5}, we have $\L^p(M,E) = \ovl{\bigcup_\alpha \L^p(M_\alpha,\tau_\alpha,E)}$. Let $F$ be a finite-dimensional subspace of $\L^p(M,E)$. If $(x_1,\ldots,x_n)$ is a basis of $F$, there exist $\alpha$ and $y_1,\ldots,y_n \in \L^p(M_\alpha,\tau_\alpha,E)$ such that $\norm{x_k-y_k}_{\L^p(M,E)} < \epsi$ for any $k$. Then there exist a $(1+\epsi')$-isomorphism $\phi \co F \to \Ran(\phi) \subset \L^p(M_\alpha,\tau_\alpha,E)$. It suffices to $(1+\epsi')$-represent $\L^p(M_\alpha,E)$ in $S^p(E)$. This task is left the reader (adapt the method of the case 1 of the proof of \cite[Theorem 3.24]{ArK}).
\end{proof}


\begin{prop}
\label{Prop-contain-ell1}
Let $E$ be an operator space. Suppose $1<p<\infty$. Given an integer $n \geq 1$ and $\lambda > 1$, if $S^p(E)$ does not contain any subspace $\lambda$-isomorphic to $\ell^1_n$ then there is a number $\lambda'>1$ depending only on $p,n$ and $\lambda$ such that for any approximately finite-dimensional von Neumann algebra equipped with a faithful normal semifinite trace, the Banach space $\L^p(M,E)$ does not contain any subspace $\lambda'$-isomorphic to $\ell^1_n$.
\end{prop}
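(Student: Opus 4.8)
The plan is to deduce the statement from the finite representability established in Proposition \ref{Finitely}, arguing by contraposition. First I would fix once and for all a number $\lambda'$ with $1 < \lambda' < \lambda$ --- for concreteness $\lambda' = \sqrt{\lambda}$ --- so that $\lambda'$ depends only on $\lambda$, a fortiori only on $p$, $n$ and $\lambda$. Then I would assume, towards a contradiction, that for some approximately finite-dimensional von Neumann algebra $M$ equipped with a faithful normal semifinite trace the space $\L^p(M,E)$ contains a subspace $V$ that is $\lambda'$-isomorphic to $\ell^1_n$, say via an isomorphism $u \co \ell^1_n \to V$ with $\norm{u}\,\norm{u^{-1}} \leq \lambda'$.

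Next I would exploit that $V$, being $n$-dimensional, is finite-dimensional, so Proposition \ref{Finitely} applies directly: $\L^p(M,E)$ is finitely representable in $S^p(E)$, hence for every $\epsi > 0$ there is a $(1+\epsi)$-isomorphism $\phi \co V \to W$ onto some subspace $W$ of $S^p(E)$, i.e. $\norm{\phi}\,\norm{\phi^{-1}} \leq 1+\epsi$. Composing, the map $\phi u \co \ell^1_n \to W$ is an isomorphism onto $W$ with $\norm{\phi u}\,\norm{(\phi u)^{-1}} \leq \norm{\phi}\,\norm{\phi^{-1}}\,\norm{u}\,\norm{u^{-1}} \leq (1+\epsi)\lambda'$, so that $W$ is $(1+\epsi)\lambda'$-isomorphic to $\ell^1_n$.

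Finally I would choose $\epsi$ small enough that $(1+\epsi)\lambda' \leq \lambda$, which is possible precisely because $\lambda' < \lambda$. This produces a subspace of $S^p(E)$ that is $\lambda$-isomorphic to $\ell^1_n$, contradicting the hypothesis. Since the bound on $\lambda'$ is uniform in $M$, this establishes the contrapositive (if the conclusion fails for even one such $M$, then the hypothesis on $S^p(E)$ fails), and hence the proposition.

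As for the main difficulty: essentially all the substance sits inside Proposition \ref{Finitely}, whose proof reduces the general approximately finite-dimensional situation through the semifinite weak* approximation to the model space $S^p(E)$. Granting that result, the only point here requiring care is absorbing the multiplicative loss $(1+\epsi)$ coming from finite representability into the gap between $\lambda'$ and $\lambda$; because finite representability permits $\epsi$ arbitrarily small, this is harmless. In particular this argument shows that $\lambda'$ may in fact be chosen independently of $p$ and $n$, the weaker dependence asserted in the statement being amply sufficient for the later applications.
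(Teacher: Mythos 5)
Your proof is correct and follows essentially the same route as the paper: a contradiction argument that fixes $\lambda'$ strictly between $1$ and $\lambda$ (the paper takes $\lambda'=\frac{\lambda+1}{2}$, you take $\sqrt{\lambda}$), transports a hypothetical $\lambda'$-isomorphic copy of $\ell^1_n$ into $S^p(E)$ via Proposition \ref{Finitely}, and absorbs the $(1+\epsi)$ loss into the gap between $\lambda'$ and $\lambda$. Your closing remark that $\lambda'$ can be chosen depending on $\lambda$ alone is also consistent with the paper's choice.
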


\begin{proof}
We let $\lambda'=\frac{\lambda+1}{2}>1$. Suppose that there exists a finite-dimensional subspace $F$ of $\L^p(M)$ which is $\lambda'$-isomorphic to $\ell^1_n$. Using Proposition \ref{Finitely}, taking $\epsi=\frac{\lambda-1}{\lambda+1}>0$, there exists a subspace $G$ of $S^p(E)$ which is $(1+\epsi)$-isomorphic to $F$. We deduce that $G$ is $\lambda'(1+\epsi)$-isomorphic to $\ell^1_n$, that is $\lambda$-isomorphic to $\ell^1_n$. Impossible.
\end{proof}

\section{Semigroups acting on $\L^p$-spaces associated to locally compact groups}
\label{sec:main}


Our first main result is the following theorem.

\begin{thm}
\label{Th-amenable-semigroup}
Suppose that $G$ is a (unimodular) second countable amenable locally compact group. Let $(T_t)_{t \geq 0}$ be a weak* continuous semigroup of selfadjoint contractive Fourier multipliers on the group von Neumann algebra $\VN(G)$. Suppose that $E$ is an $\OK$-convex operator space. Consider $1<p<\infty$. Then $(T_t)_{t \geq 0}$ induces a strongly continuous bounded analytic semigroup $(T_t \ot \Id_E)_{t \geq 0}$ of contractions on the vector-valued noncommutative $\L^p$-space $\L^p(\VN(G),E)$.
\end{thm}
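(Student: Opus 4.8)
The plan is to derive analyticity from Beurling's criterion (Theorem \ref{Th-de-Beurling}), applied on the vector-valued space $\L^p(\VN(G),E)$, by reducing everything to one uniform estimate on powers of $\Id-T_t$. The strong continuity and contractivity of $(T_t\ot\Id_E)_{t\geq 0}$ are the easy part: since $G$ is amenable, $\VN(G)$ is approximately finite-dimensional, so each self-adjoint complete contraction $T_t$ induces, via Proposition \ref{prop-tensorisation of CP maps}, a contraction $T_t\ot\Id_E$ on $\L^p(\VN(G),E)$, and strong continuity follows from the weak* continuity of $(T_t)$, uniform boundedness, and the density of elementary tensors. It then remains to produce an integer $n\geq 1$ with $\sup_{t>0}\bnorm{(\Id-T_t\ot\Id_E)^n}_{\L^p(\VN(G),E)\to\L^p(\VN(G),E)}<2^n$.

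The core is a transference chain turning this into a statement about commuting contractions on vector-valued Schatten spaces. Writing $T_t=M_{\phi_t}$, the operator $(\Id-T_t)^n$ is the Fourier multiplier with symbol $(1-\phi_t)^n$, so by \eqref{Equa-transfer-leq} together with \eqref{equavarphi} its norm is dominated by $\bnorm{(\Id-M^{\HS}_{\phi_t})^n\ot\Id_E}_{S^p_G(E)\to S^p_G(E)}$. Proposition \ref{Transference-Schur-discrete-continuous} equates the latter with the supremum over finite point configurations $\alpha$ of $\bnorm{(\Id-M_{\Phi_{t,\alpha}})^n\ot\Id_E}_{S^p_{m_\alpha}(E)\to S^p_{m_\alpha}(E)}$, where $\Phi_{t,\alpha}=[\phi_t(x_ix_j^{-1})]$ is a finite self-adjoint contractive Schur matrix. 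Finally, Lemma \ref{Prop Fell absorption} bounds this $n$-th power by the $n$-fold tensor power $\bnorm{(\Id-M_{\Phi_{t,\alpha}})^{\ot n}\ot\Id_E}$; since $(\Id-M_{\Phi_{t,\alpha}})^{\ot n}=\prod_{k=1}^n(\Id-S_k)$ with $S_k$ the mutually commuting copies of $M_{\Phi_{t,\alpha}}$ in the $n$ tensor slots, the problem reduces to a uniform bound $\bnorm{\prod_{k=1}^n(\Id-S_k)\ot\Id_E}<2^n$ for commuting self-adjoint complete contractions $S_k$ on a finite (hence approximately finite-dimensional) noncommutative space.

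This is exactly the shape handled by Pisier's projection machinery, and $\OK$-convexity enters here decisively. Because $E$ is $\OK$-convex, Lemma \ref{Lemma-K-convex} makes every $\L^p(N,E)$ with $N$ approximately finite-dimensional $\K$-convex with one common constant, and Proposition \ref{Prop-contain-ell1} shows these spaces omit $\ell^1_{n+1}$'s uniformly; hence Lemma \ref{Lemma-of-Pisier-on-projections} supplies a single $\rho<2$, depending only on $n$ and on $E$, with $\bnorm{\prod_{k=1}^n(\Id-P_k)}\leq\rho^n$ for any commuting norm-one projections $P_k$ on such a space. The remaining, genuinely hard, step is the passage from the contractions $S_k$ to commuting contractive projections. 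I would dilate the single positive self-adjoint complete contraction $M_{\Phi_{t,\alpha}}$ — whose spectrum on $S^2$ lies in $(0,1]$, since $\phi_t=\e^{-t\psi}$ with $\psi\geq 0$ — to a self-adjoint projection $R$ on a larger approximately finite-dimensional space through the standard block construction $\left(\begin{smallmatrix} S & (S-S^2)^{1/2}\\ (S-S^2)^{1/2} & \Id-S\end{smallmatrix}\right)$, realized by a completely contractive compression $\pi$, so that slotwise $\Id-S_k=\pi(\Id-R_k)\pi$ and therefore $\prod_{k=1}^n(\Id-S_k)=\pi^{\ot n}\big(\prod_{k=1}^n(\Id-R_k)\big)\pi^{\ot n}$ with $R_k$ commuting contractive projections. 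Granting this, $\bnorm{\prod_k(\Id-S_k)\ot\Id_E}\leq\bnorm{\prod_k(\Id-R_k)\ot\Id_E}\leq\rho^n<2^n$, uniformly in $t$ and $\alpha$, which is exactly the Beurling bound.

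The main obstacle is making this last dilation legitimate \emph{inside} the vector-valued noncommutative $\L^p$ category rather than merely on $S^2$: the entries $(S-S^2)^{1/2}$ are a functional calculus of a Schur multiplier and are not Schur multipliers themselves, so one must exhibit $R$ and $\pi$ as genuinely completely contractive (equivalently regular) maps on some $\L^p(\tilde N,E)$, otherwise the transfer from $S^p(E)$-norms to the projection estimate is invalid. This is where the operator-space hypothesis is indispensable: the Hilbert-space dilation must be upgraded to one compatible with the $S^p(E)$-structure, and it is the uniform $\K$-convexity of all the enlarged spaces $\L^p(\tilde N,E)$ (Lemma \ref{Lemma-K-convex}) that keeps $\rho$ independent of $t$ and of the finite approximation. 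Once the dilation is secured, assembling the estimates along the transference chain and feeding the resulting $n$ into Theorem \ref{Th-de-Beurling} completes the proof, the abelian and Euclidean statements following by specialization.
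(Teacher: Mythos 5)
Your proposal reproduces the paper's skeleton faithfully up to the decisive moment: the Beurling criterion, the transference chain Fourier multiplier $\to$ Herz--Schur multiplier $\to$ finite point configurations (Proposition \ref{Transference-Schur-discrete-continuous}) $\to$ tensor-power domination (Lemma \ref{Prop Fell absorption}), and the final appeal to Pisier's Lemma \ref{Lemma-of-Pisier-on-projections} via Lemma \ref{Lemma-K-convex} and Proposition \ref{Prop-contain-ell1} are all exactly the paper's steps. But the crux of the whole proof is the passage from the commuting selfadjoint contractions $S_k$ (the tensor-slot copies of $M_{\psi_{t,\alpha}}$) to commuting \emph{contractive projections} on a vector-valued noncommutative $\L^p$-space, and there your argument has a genuine gap that you yourself flag with ``granting this'' and ``once the dilation is secured.'' The Halmos-type block dilation $\left(\begin{smallmatrix} S & (S-S^2)^{1/2}\\ (S-S^2)^{1/2} & \Id-S\end{smallmatrix}\right)$ does produce an orthogonal projection, but only on the Hilbert space $S^2_{m_\alpha}\oplus S^2_{m_\alpha}$: the entries $(S-S^2)^{1/2}$ are Schur multipliers whose symbol is the \emph{pointwise} square root $(\psi_t-\psi_t^2)^{1/2}$, and such a symbol has no reason to induce a bounded (let alone contractive, let alone regular/completely contractive so that it tensorizes with $\Id_E$) Schur multiplier on $S^p$ for $p\neq 2$. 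So the objects $R_k$, $\pi$ you need on $\L^p(\tilde N,E)$ simply are not produced by this construction, and Pisier's lemma cannot be invoked. This is not a technicality one can wave through; it is the theorem's main difficulty.

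The paper fills exactly this hole by a different two-step mechanism. First, following \cite[Corollary 4.3]{Arh1}, the (not necessarily positive) selfadjoint contractive Schur multiplier $M_{\psi_{t/2,\alpha}}$ is embedded as the off-diagonal corner of a $2\times 2$ block Schur multiplier $W_{t/2,\alpha}$ (see \eqref{Wtalpha}) which is \emph{unital, completely positive and selfadjoint} --- note this dilates to a Markov map, not to a projection, and it requires no positivity of the original multiplier, which is why the spectral observation you make (correct, but only needed to define your square roots) becomes irrelevant. Second, by Ricard's Markov dilation \cite{Ric} (with \cite[Theorem 5.3]{HM}), the \emph{square} $(W_{t/2,\alpha})^2$ admits a Rota dilation $(W_{t/2,\alpha})^2=Q\E J$ through a \emph{hyperfinite} von Neumann algebra $N$; the conditional expectation $\E$ is a genuine trace-preserving completely positive contractive projection, so the slotwise operators $\Pi_k\ot\Id_E$ are commuting contractive projections on $\L^p(N^{\ot n},E)$ by Proposition \ref{prop-tensorisation of CP maps}, and Pisier's lemma applies. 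The semigroup identity $\psi_{t/2}^2=\psi_t$ then places $M_{\psi_t,\alpha}$ in the corner of $(W_{t/2,\alpha})^2$ (see \eqref{bloc-prime} and \eqref{Calcul-bloc}), and the block-matrix estimate transfers the $\rho^n$ bound back to $(\Id-M_{\psi_t,\alpha})^n\ot\Id_E$. In short: your transference scaffolding is right, but the dilation you propose does not exist in the $S^p(E)$ category, and the replacement (completely positive $2\times2$ embedding plus Rota dilation through a hyperfinite algebra) is precisely the missing idea.
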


\begin{proof} 
By \cite[Corollary 1.8]{DCH}, since $G$ is amenable, each Fourier multiplier $M_{\psi_t}\ov{\textrm{def}}=T_t$ is completely contractive on $\VN(G)$. By \eqref{Equa-transfer=}, we can consider the associated semigroup $\big(M_{\psi_t}^{\HS}\big)_{t \geq 0}$ of selfadjoint contractive Schur multipliers on the space $\B(\L^2(G))$. Since $G$ is amenable, the von Neumann algebra $\VN(G)$ is approximately finite-dimensional. Using the part 2 of Proposition \ref{prop-tensorisation of CP maps}, we see that the map $M_{\psi_t} \ot \Id_E$ extends to a complete contraction on the space $\L^p(\VN(G),E)$ for any $1 <p <\infty$. We deduce that
\begin{align*}
\bnorm{M_{\psi_t}^{\HS} \ot \Id_E}_{S^p_{G}(E) \to S^p_{G}(E)}
    &\leq \bnorm{M_{\psi_t}^{\HS} \ot \Id_E}_{\cb,S^p_{G}(E) \to S^p_{G}(E)}\\
		&\ov{\eqref{Equa-transfer=}}=\bnorm{M_{\psi_t} \ot \Id_E}_{\cb,\L^p(\VN(G),E) \to \L^p(\VN(G),E)} 
		\leq 1.
\end{align*}
Since the symbol of a completely bounded Fourier multiplier on $\VN(G)$ is equal almost everywhere to a continuous function, see e.g. \cite[Corollary 3.3]{Haa3}, we know that the symbol $\psi_t \co G \times G \to \R$ is continuous. For any finite family $\alpha=\{x_1,\ldots,x_{m_\alpha}\}$ of distinct elements of $G$, using Proposition \ref{Transference-Schur-discrete-continuous}, we can consider the contractive Schur multiplier $M_{\psi_{t,\alpha}}$ on $\B(\ell^2_{m_\alpha})$ defined by the matrix $\psi_{t,\alpha}=[\psi_{t}(x_ix_j^{-1})]_{1 \leq i,j \leq m_\alpha}$. As in the proof of \cite[Corollary 4.3]{Arh1}, there exists Schur multipliers $S_{1,t,\alpha}$ and $S_{2,t,\alpha}$ on $\B(\ell^2_{m_\alpha})$ such that
\begin{equation}
	\label{Wtalpha}
W_{t,\alpha}
\overset{\textrm{def}}=\begin{bmatrix}
    S_{1,t,\alpha} & M_{\psi_{t,\alpha}} \\
    (M_{\psi_{t,\alpha}})^\circ  & S_{2,t,\alpha} \\
\end{bmatrix}	
\end{equation}
is a completely positive unital selfadjoint Schur multiplier on $\B\big(\ell^2_{2m_\alpha}\big)$. For any $t \geq 0$, we see that
\begin{equation}
\label{bloc-prime}
\big(W_{\frac{t}{2},\alpha}\big)^2
\ov{\eqref{Wtalpha}}=\begin{bmatrix}
    S_{1,\frac{t}{2},\alpha}      & M_{\psi_{\frac{t}{2}},\alpha}\\
    \big(M_{\psi_{\frac{t}{2}},\alpha}\big)^\circ  & S_{2,\frac{t}{2},\alpha} \\
  \end{bmatrix}^2
=\begin{bmatrix}
    (S_{1,\frac{t}{2},\alpha})^2                        & (M_{\psi_{\frac{t}{2}},\alpha})^2 \\
    \big(M_{\psi_{\frac{t}{2}},\alpha}\big)^{\circ 2}  & (S_{2,\frac{t}{2},\alpha})^2 \\
		\end{bmatrix}
=\begin{bmatrix}
    (S_{1,\frac{t}{2},\alpha})^2                        & M_{\psi_{t},\alpha} \\
    \big(M_{\psi_{t},\alpha}\big)^{\circ }  & (S_{2,\frac{t}{2},\alpha})^2 \\
		\end{bmatrix}.
\end{equation}
Combining the construction of the noncommutative Markov chain of \cite[pages 4369-4370]{Ric} (see also \cite[Theorem 5.3]{HM}), for any $t \geq 0$, we infer that the Schur multiplier $(W_{\frac{t}{2},\alpha})^2$ admits a Rota dilation 
$$
\big((W_{\frac{t}{2},\alpha})^2\big)^{k}
=Q\E_k J,\qquad k \geq 1
$$
in the sense of \cite[Definition 10.2]{JMX} (extended to semifinite von Neumann algebras) where $J \co \B(\ell^2_{2m_\alpha}) \to N$ is a normal unital faithful $*$-representation into a von Neumann algebra $N$ (equipped with a trace) which preserve the traces, where $Q \co N \to \B(\ell^2_{2m_{\alpha}})$ is the trace preserving normal conditional expectation associated with $J$ and where the $\E_k$'s are conditional expectations onto von Neumann subalgebras of $N$. Recall that the von Neumann algebra $\Gamma_{-1}^{e}(\ell^{2,T})$ of \cite{Ric} is hyperfinite. Hence, the von Neumann algebra $N$ of the Rota Dilation is also hyperfinite. Note that we only need the case $k=1$ in the sequel of the proof. In particular, using the notation $\E\overset{\textrm{def}}=\E_1$ we have
\begin{equation}
	\label{QEJ}
\big(W_{\frac{t}{2},\alpha}\big)^2
=Q\E J 
\qquad \text{and} \qquad
\Id_{\B(\ell^2_{2m_{\alpha}})}=QJ.	
\end{equation}
We infer that
\begin{equation}
	\label{Eua-quelconque-1}
\Id_{\B(\ell^2_{2m_{\alpha}})}-\big(W_{\frac{t}{2},\alpha}\big)^2
\overset{\eqref{QEJ}}=QJ-Q\E J
=Q(\Id_N-\E)J.	
\end{equation}
Let $n$ be an integer. Note that we have
\begin{equation}
	\label{equa complexe}
	\Big(\Id_{\B(\ell^2_{2m_{\alpha}})}-\big(W_{\frac{t}{2},\alpha}\big)^2\Big)^{\ot n}
	\ov{\eqref{Eua-quelconque-1}}=\big(Q(\Id_N-\E)J\big)^{\ot n}
	=Q^{\ot n}(\Id_{N}-\E)^{\ot n} J^{\ot n}.
\end{equation}
For any integer $1 \leq k \leq n$, we consider the completely positive operator
$$
\Pi_k
=\Id_{\L^p(N)} \ot \cdots \ot \Id_{\L^p(N)} \ot \underbrace{\E}_{k} \ot
\Id_{\L^p(N)} \ot \cdots \ot \Id_{\L^p(N)}
$$
on the space $\L^p(N^{\ot n})$. By Proposition \ref{prop-tensorisation of CP maps}, we deduce that the $\Pi_k \ot \Id_E$'s induce a family of mutually commuting contractive projections on the Banach space $\L^p(N^{\ot n},E)$. Now, since $S^p(E)$ is $\K$-convex, using Proposition \ref{Prop-contain-ell1} and Lemma \ref{Lemma-of-Pisier-on-projections}, we can fix an integer $n \geq 1$ and $0<\rho<2$ such that
\begin{equation}
\label{equa strange}
\Bgnorm{\prod_{1 \leq k \leq n} \Big(\Id_{\L^p(N^{\ot n},E)}-(\Pi_k \ot \Id_E)\Big)}_{\L^p(N^{\ot n},E) \to \L^p(N^{\ot n},E)} 
\leq \rho^n.	
\end{equation}
Furthermore, we have
\begin{align}
\label{equazert}
\MoveEqLeft
\big(\Id_{\L^p(N)}-\E\big)^{\ot n}
=(\Id_{\L^p(N)}-\E) \ot \cdots \ot (\Id_{\L^p(N)}-\E)\\
&= \prod_{1 \leq k \leq n} \big(\Id_{\L^p(N)} \ot \cdots \ot \Id_{\L^p(N)} \ot \underbrace{(\Id_{\L^p(N)}-\E)}_{k} \ot \Id_{\L^p(N)} \ot \cdots \ot \Id_{\L^p(N)}\big)\nonumber \\
		&=\prod_{1 \leq k \leq n} \big(\Id_{\L^p(N^{\ot n})}-\Id_{\L^p(N)} \ot \cdots \ot \Id_{\L^p(N)} \ot \underbrace{\E}_{k} \ot \Id_{\L^p(N)} \ot \cdots \ot \Id_{\L^p(N)}\big)\nonumber\\
		&=\prod_{1 \leq k \leq n} \big(\Id_{\L^p(N^{\ot n})}-\Pi_k\big).\nonumber
\end{align}
Note that
\begin{align}
\MoveEqLeft
   \label{Calcul-bloc}         
\left(\Id_{S^p_2(S^p_{m_\alpha})}-\big(W_{\frac{t}{2},\alpha}\big)^2\right)^n		
\overset{\eqref{bloc-prime}}=\left(\begin{bmatrix}
 \Id_{S^p_{m_\alpha}}    & \Id_{S^p_{m_\alpha}}   \\
  \Id_{S^p_{m_\alpha}}    & \Id_{S^p_{m_\alpha}}   \\
\end{bmatrix}-\begin{bmatrix}
    \big(S_{1,\frac{t}{2},\alpha}\big)^2    & M_{\psi_{t},\alpha} \\
    \big(M_{\psi_{t,\alpha}}\big)^{\circ}  & \big(S_{2,\frac{t}{2},\alpha}\big)^2 \\
		\end{bmatrix}\right)^n	\\
		&=\left(\begin{bmatrix}
    \Id_{S^p_{m_\alpha}}-\big(S_{1,\frac{t}{2},\alpha}\big)^2 & \Id_{S^p_{m_\alpha}}-\big(M_{\psi_{t},\alpha}\big)^2 \\
    \Id_{S^p_{m_\alpha}}-\big(M_{\psi_t,\alpha}\big)^{\circ}  & \Id_{S^p_{m_\alpha}}-\big(S_{2,\frac{t}{2},\alpha}\big)^2 \\
		\end{bmatrix}\right)^n\nonumber\\
&=\begin{bmatrix}
\big(\Id_{S^p_{m_\alpha}}-(S_{1,\frac{t}{2},\alpha})^2\big)^n  & \big(\Id_{S^p_{m_\alpha}}-(M_{\psi_t,\alpha}\big)^n \\
\big(\Id_{S^p_{m_\alpha}}-(M_{\psi_t,\alpha})^{\circ}\big)^n  & \big(\Id_{S^p_{m_\alpha}}-(S_{2,\frac{t}{2},\alpha})^2\big)^n\\
		\end{bmatrix}. \nonumber
\end{align}   
Now, using Proposition \ref{prop-tensorisation of CP maps} in the third inequality, we obtain that
\begin{align*}
\MoveEqLeft 
\Bnorm{\big(\Id_{S^p_{m_\alpha}}-M_{\psi_t,\alpha}\big)^n \ot \Id_E}_{S^p_{m_\alpha}(E) \to S^p_{m_\alpha}(E)} \\
&\ov{\eqref{Calcul-bloc}}\leq \Bnorm{\Big(\Id_{S^p_2(S^p_{m_\alpha})}-\big(W_{\frac{t}{2},\alpha}\big)^2\Big)^n \ot \Id_{E}}_{S^p_2(S^p_{m_\alpha}(E)) \to S^p_2(S^p_{m_\alpha}(E))}\\
&\ov{\eqref{Majoration-norme-multiplicateur-1}}\leq \Bnorm{\Big(\Id_{S^p_2(S^p_{m_\alpha})}-\big(W_{\frac{t}{2},\alpha}\big)^2\Big)^{\ot n}\ot \Id_{E}}_{S^p(\B(\ell^2_{2 m_\alpha})^{\ot n},E) \to S^p(\B(\ell^2_{2{m_\alpha}})^{\ot n},E)}\\
&\ov{\eqref{equa complexe}}=\Bnorm{Q^{\ot n}(\Id_{\L^p(N)}-\E)^{\ot n} J^{\ot n} \ot \Id_E}_{S^p(\B(\ell^2_{2m_\alpha})^{\ot n},E) \to S^p(\B(\ell^2_{2m_\alpha})^{\ot n},E)}\\
&\leq \Bnorm{\big(\Id_{\L^p(N)}-\E\big)^{\ot n}\ot \Id_{E}}_{\L^p(N^{\ot n},E) \to \L^p(N^{\ot n},E)}\\
&\ov{\eqref{equazert}}=\Bgnorm{\prod_{1 \leq k \leq n} \Big(\Id_{\L^p(N^{\ot n},E)}-(\Pi_k\ot \Id_E)\Big)}_{\L^p(N^{\ot n},E) \to \L^p(N^{\ot n},E)} 
\ov{\eqref{equa strange}}\leq \rho^n.
\end{align*}
We deduce with Proposition \ref{Transference-Schur-discrete-continuous} that
\begin{equation}
	\label{rho-n}
\norm{\Big(\Id_{S^p_G}-M_{\psi_t}^{\HS}\Big)^n \ot \Id_E}_{S^p_G(E) \to S^p_G(E)}
	\leq \rho^n.	
\end{equation}
Hence, for any $t \geq 0$, using transference, we finally obtain
\begin{align*}
\MoveEqLeft
\norm{(\Id_{\L^p(\VN(G)}-T_t\big)^n \ot \Id_E}_{\L^p(\VN(G),E) \to \L^p(\VN(G),E)} \\
&=\norm{(\Id_{\L^p(\VN(G)}-M_{\psi_t}\big)^n \ot \Id_E}_{\L^p(\VN(G),E) \to \L^p(\VN(G),E)} \\
&=\norm{(M_{1-\psi_t}\big)^n \ot \Id_E}_{\L^p(\VN(G),E) \to \L^p(\VN(G),E)} 
=\norm{(M_{(1-\psi_t)^n} \ot \Id_E}_{\L^p(\VN(G),E) \to \L^p(\VN(G),E)}\\
&\overset{\eqref{Equa-transfer-leq}}\leq \bnorm{(M_{(1-\psi_t)^n}^{\HS} \ot \Id_E}_{S^p_{G}(E) \to S^p_{G}(E)}
\overset{\eqref{equavarphi}}= \bnorm{\big(\Id_{S^p_{G}}-M_{\psi_t}^{\HS}\big)^n \ot \Id_E}_{S^p_{G}(E) \to S^p_{G}(E)} 
		\overset{\eqref{rho-n}}\leq \rho^n.
\end{align*}
We conclude by Theorem \ref{Th-de-Beurling}.
\end{proof}

Now, we want use this result with an abelian locally compact group $G$. If $1 \leq p<\infty$, note that the definition of the Bochner space $\L^p(G,X)$ does not need an operator space structure on the Banach space $X$. So this theorem and the similar result \cite[Theorem 3.4]{Arh3} for amenable discrete groups led us to introduce the following definition.

\begin{defi}
\label{def-Kprime}
We say that a Banach space $X$ is $\K'$-convex if $X$ is isomorphic to a Banach space $E$ such that there exists an operator space structure on $E$ such that the vector-valued Schatten space $S^p(E)$ is $\K$-convex. 
\end{defi}

With the language of operator spaces, the condition on $E$ means that the operator space $E$ is $\OK$-convex. In the situation of this definition, $E$ is a closed subspace of $S^p(E)$, hence a $\K$-convex Banach space. Since $\K$-convexity is preserved under isomorphisms, we deduce that a $\K'$-convex Banach space is necessarily $\K$-convex. We conjecture the following. 

\begin{conj}
\label{Conj}
The class of $\K'$-convex Banach spaces coincides with the class of $\K$-convex Banach spaces.
\end{conj}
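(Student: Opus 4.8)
The plan is to separate the two inclusions. One of them costs nothing: the paragraph immediately preceding Conjecture~\ref{Conj} already records that any $\K'$-convex space embeds (after an isomorphism) as the closed subspace $E\subset S^p(E)$ of a $\K$-convex space, and since $\K$-convexity passes to closed subspaces and is invariant under isomorphism, every $\K'$-convex space is automatically $\K$-convex. Thus the entire content of the conjecture is the reverse implication: \emph{every $\K$-convex Banach space $X$ is isomorphic to a Banach space $E$ admitting an $\OK$-convex operator space structure}, i.e. one for which $S^p(E)$ is $\K$-convex.

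For that reverse implication I would first translate both hypotheses into their local, finite-dimensional forms. By Pisier's characterization recalled in the introduction, $X$ is $\K$-convex precisely when it does not contain the $\ell^1_n$ uniformly (equivalently $X$ has nontrivial type), whose quantitative version is exactly the hypothesis feeding Lemma~\ref{Lemma-of-Pisier-on-projections}; on the operator side, $\OK$-convexity of $E$ amounts to $S^p(E)$ not containing $\ell^1_n$ uniformly. So, abstractly, the task is to manufacture an operator space structure on an isomorph of $X$ that upgrades the \emph{Banach-space} failure of containing $\ell^1_n$ uniformly to the \emph{operator-space} failure at the level of $S^p(E)$. The natural constructive route would be interpolation: a $\K$-convex $X$ has nontrivial type, so one could attempt to present an isomorph of $X$ as an interpolation space $[A,H]_\theta$ with $H$ Hilbertian, equip $H$ with the operator Hilbert space structure $\mathrm{OH}$ (which is $\OK$-convex) and $A$ with some admissible operator space structure, and then transport $\OK$-convexity through the interpolation scale using the stability properties to be established in Section~\ref{K'-convex}.

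The main obstacle, and the reason the statement is recorded as a conjecture rather than a theorem, is precisely this construction step. $\K$-convexity is a purely local Banach-space invariant, whereas $\OK$-convexity is a completely bounded phenomenon, and there is no general mechanism that lifts the former to the latter for an \emph{arbitrary} abstract $\K$-convex space. The two canonical operator space structures one could place on $X$ with no extra information, $\mathrm{MIN}(X)$ and $\mathrm{MAX}(X)$, both destroy $\OK$-convexity in general (the associated vector-valued Schatten spaces fail to control $\ell^1_n$), and the interpolation strategy above is not self-contained: it presupposes that the endpoint $A$ already carries a compatible $\OK$-convex structure, which is exactly the kind of statement one is trying to prove. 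Consequently I would expect genuine progress only for structured subclasses — spaces already presented by interpolation, duality, ultrapowers, or as subquotients of noncommutative $\L^p$-spaces — which is indeed the scope of the stability results of Section~\ref{K'-convex}, leaving the general equivalence open.
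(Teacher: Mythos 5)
What you were asked to prove is a conjecture: the paper gives no proof of it and explicitly records that it is unknown whether every $\K$-convex Banach space is isomorphic to an $\OK$-convex operator space, so your conclusion that the general equivalence remains open is exactly the paper's own position, not a gap in your argument. The one inclusion you do establish --- $\K'$-convex implies $\K$-convex, via the isometric embedding $E \subset S^p(E)$, the passage of $\K$-convexity to closed subspaces, and its invariance under isomorphism --- is precisely the paper's remark preceding Conjecture \ref{Conj}, and your identification of the reverse implication as the entire open content, with the stability results of Section \ref{K'-convex} as the only indirect evidence, matches what the paper says. Your interpolation sketch (presenting an isomorph of $X$ as $[A,H]_\theta$ with $H$ carrying the operator Hilbert space structure) is speculative and, as you yourself note, circular at the endpoint $A$; since you do not claim it as a proof, there is nothing to correct: you proved what can be proved, the same way the paper does, and left open what the paper leaves open.
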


It might even be possible that each $\K$-convex Banach space admits an $\OK$-convex operator space structure. In Section \ref{K'-convex}, we will show that the class of $\K'$-convex Banach spaces shares the same nice properties of stability under usual operations than the class of $\K$-convex Banach spaces. This is an observation that goes in the direction of the conjecture.

Now, we state the consequence for abelian locally compact groups.

\begin{cor}
\label{Cor-abelien-semigroups}
Suppose that $G$ is a second countable abelian locally compact group. Let $(T_t)_{t \geq 0}$ be a weak* continuous semigroup of (not necessarily positive) selfadjoint contractive Fourier multipliers on $\L^\infty(G)$. Let $X$ be a $\K'$-convex Banach space. Consider $1<p<\infty$. Then $(T_t)_{t \geq 0}$ induces a strongly continuous bounded analytic semigroup $(T_t \ot \Id_X)_{t \geq 0}$ of contractions on the Bochner space $\L^p(G,X)$.
\end{cor}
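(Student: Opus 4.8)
The plan is to transport the statement to the Pontryagin dual group and to recognise the Bochner space as a vector-valued noncommutative $\L^p$-space over a commutative von Neumann algebra, so that the noncommutative main result Theorem \ref{Th-amenable-semigroup} can be applied. First I would pass to the dual group $H=\widehat{G}$. Since $G$ is second countable abelian, so is $H$; in particular $H$ is unimodular and amenable, so the hypotheses of Theorem \ref{Th-amenable-semigroup} are available for $H$. The Fourier transform implements a trace-preserving $*$-isomorphism $\L^\infty(G)\cong\VN(H)$ (Haar measure on $G$ against the Plancherel trace on $\VN(H)$), hence an isometric identification $\L^p(G)\cong\L^p(\VN(H))$ for every $1\leq p\leq\infty$. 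Under it a classical Fourier multiplier on $\L^\infty(G)$, whose symbol lives on $\widehat{G}=H$, becomes a Fourier multiplier on $\VN(H)$ in the sense of the preliminaries, and selfadjointness, contractivity and weak* continuity are all preserved. Thus $(T_t)_{t\geq 0}$ is transported to a weak* continuous semigroup of selfadjoint contractive Fourier multipliers on $\VN(H)$.

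Since $X$ is $\K'$-convex, by Definition \ref{def-Kprime} there is a Banach space isomorphism $u\co X\to E$ where $E$ carries an $\OK$-convex operator space structure. Applying Theorem \ref{Th-amenable-semigroup} to $H$, the transported semigroup and the operator space $E$, I obtain that $(T_t\ot\Id_E)_{t\geq 0}$ is a strongly continuous bounded analytic semigroup on $\L^p(\VN(H),E)$. Because $\VN(H)\cong\L^\infty(G)$ is commutative, the vector-valued noncommutative $\L^p$-space $\L^p(\VN(H),E)$ coincides, as a Banach space, with the Bochner space $\L^p(G,E)$ (the commutative case of Pisier's construction \cite{Pis5}), and the transported operator is precisely $T_t\ot\Id_E$ acting on $\L^p(G,E)$. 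The isomorphism $\Id_{\L^p(G)}\ot u\co\L^p(G,X)\to\L^p(G,E)$ then conjugates $T_t\ot\Id_X$ into $T_t\ot\Id_E$; since strong continuity and bounded analyticity are invariant under similarity, $(T_t\ot\Id_X)_{t\geq 0}$ is a strongly continuous bounded analytic semigroup on $\L^p(G,X)$.

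It remains to check that $(T_t\ot\Id_X)_{t\geq 0}$ consists of \emph{contractions}, which cannot be read off the last conjugation because $u$ need not be isometric; here the $\K'$-convexity plays no role and I would argue for an arbitrary Banach space $X$. Equip $X$ with any operator space structure, say $\min(X)$; the underlying Banach space of $\L^p(\VN(H),\min(X))$ is still the Bochner space $\L^p(G,X)$, since the minimal tensor product against a commutative algebra does not feel the operator space structure. By \cite[Corollary 1.8]{DCH} the amenability of $H$ makes each $T_t$ completely contractive on $\VN(H)$, and selfadjointness makes it a complete contraction on $\L^1(\VN(H))$, as recalled in the preliminaries; since $\VN(H)$ is approximately finite-dimensional, part 2 of Proposition \ref{prop-tensorisation of CP maps} then gives that $T_t\ot\Id_{\min(X)}$ is completely contractive on $\L^p(\VN(H),\min(X))$, whence $T_t\ot\Id_X$ is a contraction on $\L^p(G,X)$.

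The main obstacle is the clean justification of the identification of the vector-valued noncommutative $\L^p$-space $\L^p(\VN(H),E)$ over the commutative algebra $\VN(H)\cong\L^\infty(G)$ with the Bochner space $\L^p(G,E)$, together with the verification that the two resulting actions of $T_t\ot\Id_E$ agree; once this dictionary is in place, the remainder is transference and similarity bookkeeping on top of Theorem \ref{Th-amenable-semigroup}.
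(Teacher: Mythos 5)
Your proof is correct and follows essentially the same route as the paper: the paper's own proof simply notes the $*$-isomorphism $\L^\infty(G) \cong \VN(\hat{G})$ and invokes Theorem \ref{Th-amenable-semigroup}. Your extra care with the identification of $\L^p(\VN(\hat{G}),E)$ with the Bochner space and with the contraction property (which does not transfer through the non-isometric isomorphism $u \co X \to E$) only makes explicit details the paper leaves implicit.
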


\begin{proof}
Note that we have a $*$-isomorphism of von Neumann algebras between $\L^\infty(G)$ and $\VN(\hat{G})$ where $\hat{G}$ is the dual of $G$. Thus this corollary is a consequence of Theorem \ref{Th-amenable-semigroup}.
\end{proof}

By \cite[Corollary 0.1.1]{Lar} (and a duality argument), note that a semigroup satisfying the assumptions of Corollary \ref{Cor-abelien-semigroups} is precisely a semigroup $(T_t)_{t \geq 0}$ defined by $T_t(f)=\mu_t \ast f$ where $(\mu_t)_{t \geq 0}$ is a family of symmetric\footnote{\thefootnote. Recall that a bounded measure $\mu$ is symmetric if $\mu(-A)=\mu(A)$ for any measurable subset $A$ of $G$. It is clear that a bounded measure $\mu$ is symmetric if and only if its Fourier transform $\hat{\mu}$ is real valued.} bounded measures on $G$ with $\norm{\mu_t} \leq 1$ for any $t \geq 0$, forming a convolution semigroup, i.e. such that
$$
\mu_t \ast \mu_s
=\mu_{t+s}, \qquad t,s \geq 0.
$$
Thus, with a noncommutative extra assumption on $X$, this result gives an extension of the main result \cite[Theorem 1.2]{Pis3} of the paper of Pisier which is stated for a semigroup induced by a convolution semigroup $(\mu_t)_{t \geq 0}$ of symmetric \textit{probability measures} on $G$. In our result, we does not need that each $T_t$ is positive or that $T_t(1)=1$. The (unnecessary?) supplementary assumption is the price to pay to have the right to use noncommutative methods.


\section{Ritt operators acting on noncommutative $\L^p$-spaces associated to discrete compact groups}
\label{Sec-Ritt}

An operator $T \co X \to X$ is power-bounded if there exists a constant $C_0 \geq 0$ such that
$$
\norm{T^n}_{X \to X} 
\leq C_0, \qquad n \geq 0.
$$
Then a power-bounded $T$ is called a Ritt operator if there  exists a constant $C_1 \geq 0$ such that
\begin{equation}
\label{Definition-Ritt-operator}
\qquad n\norm{T^n -T^{n-1}} \leq C_1
\qquad n \geq 1.
\end{equation}
Ritt operators can be characterized by a spectral condition, as follows. Let 
$$
\D
=\{z \in \C\, :\, \vert z\vert<1\}
$$
be the open unit disc. For any bounded operator $T \co X \to X$, let $\sigma(T)$ denote the spectrum of $T$. Then  $T$ is a Ritt operator if and only if $\sigma(T) \subset \ovl{\D}$ and there exists a constant $K>0$ such that 
\begin{equation}
\label{Res}
\forall\, z \in \C \setminus \ovl{\D},\qquad
\norm{(z -T)^{-1}} \leq \,\frac{K}{\vert z-1\vert}.
\end{equation}
See \cite{Lyu}, \cite{ALM}, \cite{AFM}, \cite{Arh4}, \cite{Bl1} and \cite{Bl2} and references therein. Finally, recall the following characterization.


%
\begin{lemma}
\label{Lemma-carac-Ritt}
Let $T$ be a power-bounded linear operator on a Banach space $X$. Then $T$ is a Ritt operator if and only if the semigroup $\{e^{t(T-1)}\}_{t \geq 0}$ is bounded analytic and $\Sp(T) \subset \D \cup \{1\}$.
\end{lemma}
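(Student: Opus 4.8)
The plan is to recognise the statement as the exact transcription, under the change of variable $z = 1-\lambda$, of the sectoriality criterion for bounded analyticity recalled before Theorem~\ref{Th-de-Beurling}. Write $A \ov{\textrm{def}}= \Id_X - T$, a bounded operator whose negative $-A = T-\Id_X$ is the generator of $\{e^{t(T-1)}\}_{t \geq 0}$. By that criterion, $\{e^{t(T-1)}\}_{t \geq 0}$ is bounded analytic if and only if $A$ is sectorial of type $<\frac{\pi}{2}$. The bridge between the two pictures is the elementary resolvent identity
\begin{equation*}
(\lambda-A)^{-1} = -(z-T)^{-1}, \qquad z = 1-\lambda,
\end{equation*}
which transports $\Sp(A) = 1 - \Sp(T)$, turns the weight $|z-1|$ of \eqref{Res} into $|\lambda|$, and sends the exterior of the unit disc to the exterior of the disc of centre $1$ and radius $1$.

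For the implication that $T$ Ritt forces both conclusions, I would first replace the disc-exterior condition \eqref{Res} by its sharper Stolz-domain form: it is classical (see \cite{Lyu}, \cite{Bl1}, \cite{NaZ}) that a Ritt operator has its spectrum inside a Stolz domain with vertex $1$, lying in $\D$ except at the point $1$ and subtending there an angle strictly less than $\pi$, with the resolvent estimate $\norm{(z-T)^{-1}} \leq \frac{C}{|z-1|}$ valid on the whole complement of that domain. In particular $\Sp(T) \subset \D \cup \{1\}$. Applying $z \mapsto 1-z$, the Stolz domain becomes a sector of half-angle $\omega < \frac{\pi}{2}$ containing $\Sp(A)$, and the resolvent estimate becomes $\norm{\lambda(\lambda-A)^{-1}} \leq C$ on the complement of every slightly larger sector; this is precisely sectoriality of type $<\frac{\pi}{2}$, hence bounded analyticity of $\{e^{t(T-1)}\}_{t \geq 0}$.

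Conversely, assuming $A$ sectorial of type $\omega < \frac{\pi}{2}$ and $\Sp(T) \subset \D \cup \{1\}$, I would recover \eqref{Res} by covering $\{|z|>1\}$ with three regimes and checking the bound $\frac{C}{|z-1|}$ on each. When $|z-1|$ is small the point $\lambda = 1-z$ with $|z|>1$ satisfies $\Re \lambda < \tfrac12|\lambda|^2$, which for small $|\lambda|$ pushes $\lambda$ outside the sector $\Sigma_{\omega'}$ for any fixed $\omega' \in (\omega,\frac{\pi}{2})$, so the sectoriality estimate gives $\norm{(z-T)^{-1}} = \norm{(\lambda-A)^{-1}} \leq \frac{C}{|\lambda|} = \frac{C}{|z-1|}$; for $|z-1| = |\lambda|$ large the boundedness of $A$ yields $\norm{(\lambda-A)^{-1}} \leq \frac{1}{|\lambda| - \norm{A}}$, again of the right order; and on the remaining part of $\{|z|>1\}$, where $|z-1|$ stays in a bounded range, the closure is a compact subset of the resolvent set because $\Sp(T) \subset \ovl{\D}$ meets $\T$ only at $1$, so the resolvent is bounded there by continuity. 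Together with $\Sp(T) \subset \ovl{\D}$ this is the characterisation \eqref{Res}, and $T$ is a Ritt operator.

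The crux is the passage between the two resolvent conditions, i.e. the geometric mismatch in the variable $\lambda = 1-z$: the region $\{|z|>1\}$ on which \eqref{Res} lives does not map into the complement of any sector $\Sigma_{\omega'}$ with $\omega' < \frac{\pi}{2}$ — the rays of $\lambda$ near the positive real axis escape it — so sectoriality alone cannot see all of \eqref{Res}, and symmetrically \eqref{Res} alone does not fill the sector-complement near the vertex. This is exactly why one needs the upgrade to the Stolz-domain estimate in one direction and the three-regime splitting (sectoriality near $1$, boundedness of $A$ at infinity, and the hypothesis $\Sp(T) \subset \D\cup\{1\}$ in between) in the other. A final point of care, stressed in the Preliminaries, is to keep the angle strictly below $\frac{\pi}{2}$ throughout, since it is this strictness that distinguishes \emph{bounded} analyticity from mere analyticity and corresponds to the Stolz vertex angle being strictly less than $\pi$.
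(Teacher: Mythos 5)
The paper itself contains no proof of this lemma: it is introduced with ``recall the following characterization,'' the references \cite{Lyu}, \cite{NaZ}, \cite{Bl1}, \cite{Bl2} standing in for an argument, so there is no in-paper proof to compare yours against. On its own merits, your proof is correct and essentially complete. The dictionary $\lambda = 1-z$, the identification of bounded analyticity of $\{e^{t(T-1)}\}_{t\geq 0}$ with sectoriality of $A=\Id_X-T$ of type $<\frac{\pi}{2}$ (the criterion recalled in the paper's preliminaries), and the resolvent identity $(\lambda-A)^{-1}=-(z-T)^{-1}$ are all right. In the forward direction you correctly identify the key subtlety: the disc-exterior estimate \eqref{Res} must first be upgraded to a Stolz-domain estimate before the change of variable yields sectoriality, since the complement of a sector $\Sigma_{\omega'}$ is not contained in the image of $\{|z|>1\}$; that upgrade is the classical Neumann-series argument of Lyubich and Nagy--Zem\'anek, and citing it is consistent with the paper's own practice. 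The converse's three-regime covering of $\{|z|>1\}$ is sound: the equivalence $|z|>1 \Leftrightarrow \Re\lambda < \frac{1}{2}|\lambda|^2$ does place every such $\lambda$ with $|\lambda|\leq\cos\omega'$ outside $\Sigma_{\omega'}$ (membership would force $\Re\lambda \geq |\lambda|\cos\omega' \geq |\lambda|^2$), the Neumann bound handles $|\lambda|$ large, and the hypothesis $\Sp(T)\subset \D\cup\{1\}$ is used exactly where it must be, to make the closure of the intermediate region a compact subset of the resolvent set. The one point worth flagging if you write this up in full: passing from the estimate \eqref{Res} back to the Ritt property \eqref{Definition-Ritt-operator} invokes the nontrivial direction of the spectral characterization stated before \eqref{Res} (a Cauchy-integral/functional-calculus argument which in particular re-proves power-boundedness); since the paper asserts that equivalence, relying on it is legitimate, but it is the single place where a substantial cited theorem, rather than your own computation, carries the load.
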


Let $I$ be an index set equipped with the discrete measure. Here, we will use the notation $e_{ij}^{\ot n} \overset{\textrm{def}}=e_{ij} \ot \cdots \ot e_{ij}$ for any $i,j \in I$. Recall the fundamental observation of \cite[Lemma 3.3]{Arh3}. We have for any positive integer $n$ and any matrix $b=[b_{ij}]$ which is finitely supported
\begin{equation}
\label{Absorption}
\norm{\sum_{i,j} e_{ij} \ot b_{ij}}_{S^p_I(E)}
=\norm{\sum_{i,j} e_{ij}^{\ot n} \ot b_{ij}}_{\L^p(\ovl{\ot}_{i=1}^n \B(\ell^2_I),E)}.	
\end{equation}
We will use the following lemma which is a noncommutative analogue of \cite[Lemma 13]{LLM} for Schur multipliers.

\begin{lemma}
\label{Lemma-consequence-entrelacement} 
Suppose $1 \leq p < \infty$. Let $E$ be an operator space. Let $n \geq 2$ and $m \geq 1$ be two integers. For any family $(M_{\phi_{rl}})_{1 \leq r \leq n, \leq l \leq m}$ of regular Schur multipliers on $S^p_I(E)$, we have
\begin{align}
\label{Majoration-norme-multiplicateur2}
\norm{\sum_{l=1}^{m} M_{\phi_nl} \cdots M_{\phi_{1l}} \ot \Id_E}_{S^p_I(E) \to S^p_I(E)}\leq \norm{\sum_{l=1}^{m} M_{\phi_{nl}} \ot \cdots \ot M_{\phi_{1l}} \ot \Id_E}_{S^p_I(E) \to S^p_I(E)}. 
\end{align} 
\end{lemma}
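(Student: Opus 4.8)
The plan is to exploit the Fell-type absorption isometry already recorded in \eqref{Absorption}, which identifies $S^p_I(E)$ with the ``diagonal'' subspace of $\L^p(\ovl{\ot}_{i=1}^n \B(\ell^2_I),E)$ spanned by the tensors $e_{ij}^{\ot n}$. Write $\iota \co S^p_I(E) \to \L^p(\ovl{\ot}_{i=1}^n \B(\ell^2_I),E)$ for the induced linear map $\sum_{i,j} e_{ij} \ot b_{ij} \mapsto \sum_{i,j} e_{ij}^{\ot n} \ot b_{ij}$; by \eqref{Absorption} this $\iota$ is an isometry. Since a Schur multiplier acts diagonally on matrix units, the multipliers $M_{\phi_{1l}},\ldots,M_{\phi_{nl}}$ mutually commute and their composition is simply the Schur multiplier whose symbol is the pointwise product $\prod_{r=1}^n \phi_{rl}$; thus the operator on the left-hand side is the Schur multiplier with symbol $\sum_{l=1}^m \prod_{r=1}^n \phi_{rl}$.

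First I would establish the key intertwining relation
$$
\Big(\sum_{l=1}^m M_{\phi_{nl}} \ot \cdots \ot M_{\phi_{1l}} \ot \Id_E\Big) \circ \iota
= \iota \circ \Big(\sum_{l=1}^m M_{\phi_{nl}} \cdots M_{\phi_{1l}} \ot \Id_E\Big).
$$
It suffices to check this on an elementary tensor $e_{ij}^{\ot n} \ot b$. Each factor $M_{\phi_{rl}}$ multiplies the corresponding leg $e_{ij}$ by the scalar $\phi_{rl}(i,j)$, so the tensor product operator $M_{\phi_{nl}} \ot \cdots \ot M_{\phi_{1l}}$ sends $e_{ij}^{\ot n}$ to $\big(\prod_{r=1}^n \phi_{rl}(i,j)\big) e_{ij}^{\ot n}$; summing over $l$ and comparing with the diagonal action of the multiplier with symbol $\sum_l \prod_r \phi_{rl}$ gives the identity. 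In particular the tensor product operator leaves the range of $\iota$ invariant.

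Finally, the conclusion is immediate: for every $x \in S^p_I(E)$, the isometry property of $\iota$ together with the intertwining relation gives
$$
\Bnorm{\Big(\sum_{l=1}^m M_{\phi_{nl}} \cdots M_{\phi_{1l}} \ot \Id_E\Big)(x)}_{S^p_I(E)}
= \Bnorm{\Big(\sum_{l=1}^m M_{\phi_{nl}} \ot \cdots \ot M_{\phi_{1l}} \ot \Id_E\Big)\big(\iota(x)\big)}_{\L^p(\ovl{\ot}_{i=1}^n \B(\ell^2_I),E)},
$$
which is at most $\norm{\sum_{l} M_{\phi_{nl}} \ot \cdots \ot M_{\phi_{1l}} \ot \Id_E} \cdot \norm{\iota(x)} = \norm{\sum_{l} M_{\phi_{nl}} \ot \cdots \ot M_{\phi_{1l}} \ot \Id_E}\cdot \norm{x}_{S^p_I(E)}$; taking the supremum over $x$ yields the claimed inequality. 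The regularity hypothesis on each $M_{\phi_{rl}}$ is precisely what guarantees that every tensorized operator appearing above is well defined and bounded on the relevant vector-valued Schatten space. No single step is a genuine obstacle: the real content is recognizing the correct embedding $\iota$ and carrying out the diagonal computation that produces the intertwining, after which the estimate is purely formal given the absorption identity \eqref{Absorption}.
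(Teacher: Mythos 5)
Your proof is correct and follows essentially the same route as the paper: the paper's own argument takes a finitely supported element, applies \eqref{Absorption} to pass to the diagonal tensors $e_{ij}^{\ot n}$, observes that the tensor-product multiplier acts there by the same scalars $\sum_l \prod_r \phi_{rl}(i,j)$, and applies \eqref{Absorption} again to return — which is exactly your isometry $\iota$ together with the intertwining relation. Your repackaging of that computation as ``isometric embedding plus intertwining'' changes nothing of substance.
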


\begin{proof}
 For any positive integer $n$ and any function $a \co I \times I \to E$ finitely supported, using \eqref{Absorption}, we obtain
\begin{align*}
\MoveEqLeft  \Bgnorm{\bigg(\sum_{l=1}^{m} M_{\phi_nl} \cdots M_{\phi_{1l}} \ot \Id_E\bigg)\bigg(\sum_{i,j} e_{ij} \ot a_{ij}\bigg)}_{S^p_I(E)}
=\Bgnorm{\sum_{i,j} \sum_{l=1}^{m} \phi_{nl}(i,j) \cdots \phi_{1l}(i,j) e_{ij} \ot a_{ij}}_{S^p_I(E)}\\
&=\Bgnorm{\sum_{i,j} e_{ij} \ot \bigg( \sum_{l=1}^{m} \phi_{nl}(i,j) \cdots \phi_{1j}(i,j) a_{ij}\bigg)}_{S^p_I(E)}\\
&\ov{\eqref{Absorption}}=\Bgnorm{\sum_{i,j} e_{ij}^{\ot n} \ot \bigg( \sum_{l=1}^{m} \phi_{nl}(i,j) \cdots \phi_{1l}(i,j) a_{ij}\bigg)}_{\L^p(\ovl{\ot}_{i=1}^n \B(\ell^2_I),E)}\\
&=\Bgnorm{\sum_{i,j} \sum_{l=1}^{m} \phi_{nl}(i,j) \cdots \phi_{1l}(i,j)e_{ij}^{\ot n} \ot a_{ij}}_{\L^p(\ovl{\ot}_{i=1}^n \B(\ell^2_I),E)}\\
&=\Bgnorm{\bigg(\sum_{l=1}^{m} M_{\phi_nl} \ot \cdots \ot M_{\phi_{1l}} \ot \Id_E\bigg)\bigg(\sum_{i,j} e_{ij}^{\ot n} \ot a_{ij}\bigg)}_{\L^p(\ovl{\ot}_{i=1}^n \B(\ell^2_I),E)}\\
&\leq \norm{\sum_{l=1}^{m} M_{\phi_nl} \ot \cdots \ot M_{\phi_{1l}} \ot \Id_E}_{\L^p(\ovl{\ot}_{i=1}^n \B(\ell^2_I),E) \to \L^p(\ovl{\ot}_{i=1}^n \B(\ell^2_I),E)}
\Bgnorm{\sum_{i,j} e_{ij}^{\ot n} \ot a_{ij}}_{\L^p(\ovl{\ot}_{i=1}^n \B(\ell^2_I),E)}\\
&\ov{\eqref{Absorption}}=\norm{\sum_{l=1}^{m} M_{\phi_nl} \ot \cdots \ot M_{\phi_{1l}} \ot \Id_E}_{\L^p(\ovl{\ot}_{i=1}^n \B(\ell^2_I),E) \to \L^p(\ovl{\ot}_{i=1}^n \B(\ell^2_I),E)} \Bgnorm{\sum_{i,j} e_{ij} \ot a_{ij}}_{S^p_I(E)}.
\end{align*}
\end{proof}

The next theorem is a noncommutative variant of the main result of \cite{LLM}. Note also the result \cite[Theorem 5]{Xu3} for uniformly convex spaces and for squares of unital selfadjoint positive maps. 	
	%
This is our second main result. Note that we do not need any assumption of positivity in the following result.

\begin{thm}
\label{Main-result}
Let $G$ be a (unimodular) second countable amenable locally compact group. Let $E$ be a $\OK$-convex operator space. Let $M_\varphi \co \VN(G) \to \VN(G)$ be a Fourier multiplier which is the square of a contractive selfadjoint Fourier multiplier on $\VN(G)$. Suppose $1<p<\infty$. Then $M_{\varphi} \ot \Id_E \co \L^p(\VN(G),E) \to \L^p(\VN(G),E)$ is a Ritt operator.
\end{thm}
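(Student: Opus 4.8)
The plan is to verify directly the two defining properties of a Ritt operator for $T \ov{\textrm{def}}= M_\varphi \ot \Id_E$ on $\L^p(\VN(G),E)$: power-boundedness, and the estimate \eqref{Definition-Ritt-operator}. The whole argument parallels the proof of Theorem \ref{Th-amenable-semigroup}, with the \emph{product} form of Pisier's inequality (Lemma \ref{Lemma-of-Pisier-on-projections}) replaced by its \emph{sum} form (Theorem \ref{Th-Pisier}). Write $M_\varphi=(M_\psi)^2$, where $M_\psi$ is a selfadjoint contractive Fourier multiplier and $\varphi=\psi^2$. Since $G$ is amenable, $M_\psi$ is completely contractive on $\VN(G)$ by \cite[Corollary 1.8]{DCH}, so $M_\psi\ot\Id_E$ is a complete contraction on $\L^p(\VN(G),E)$ by the part 2 of Proposition \ref{prop-tensorisation of CP maps}; hence $T^n=(M_\psi\ot\Id_E)^{2n}$ is a complete contraction and $T$ is power-bounded with constant $1$. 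It remains to prove that $n\norm{T^{n-1}(\Id-T)}\leq C$ uniformly in $n\geq 2$, the case $n=1$ being trivial.

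First I would reduce to Schur multipliers as in the proof of Theorem \ref{Th-amenable-semigroup}. By amenability and \eqref{Equa-transfer=} we pass to the selfadjoint completely contractive Herz--Schur multiplier $M_\psi^{\HS}$, whose symbol $\psi$ is continuous by \cite[Corollary 3.3]{Haa3}. For any finite family $\alpha=\{x_1,\dots,x_{m_\alpha}\}$ of distinct points of $G$, Proposition \ref{Transference-Schur-discrete-continuous} provides the selfadjoint contractive matrix Schur multiplier $M_{\psi_\alpha}$ with $\psi_\alpha=[\psi(x_ix_j^{-1})]$. As in \cite[Corollary 4.3]{Arh1}, I embed it into a completely positive unital selfadjoint Schur multiplier $W_\alpha=\begin{bmatrix}S_{1,\alpha}&M_{\psi_\alpha}\\(M_{\psi_\alpha})^\circ&S_{2,\alpha}\end{bmatrix}$ on $\B(\ell^2_{2m_\alpha})$ and set $V_\alpha\ov{\textrm{def}}=(W_\alpha)^2$, a completely positive unital selfadjoint Schur multiplier whose off-diagonal block is $(M_{\psi_\alpha})^2=M_{\varphi_\alpha}$. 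Exactly as in \eqref{QEJ}, the Rota dilation of \cite[pages 4369--4370]{Ric} (see also \cite[Theorem 5.3]{HM}) gives $V_\alpha=Q\E J$ and $\Id=QJ$, where $N$ is hyperfinite, $J$ is a trace-preserving normal unital $*$-representation and $Q$ is the associated trace-preserving conditional expectation.

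The decisive step is the commutation identity $\sum_{k=1}^n V_\alpha^{k-1}(\Id-V_\alpha)V_\alpha^{n-k}=nV_\alpha^{n-1}(\Id-V_\alpha)$, valid because all factors commute. Applying Lemma \ref{Lemma-consequence-entrelacement} to the regular Schur multipliers $V_\alpha$ and $\Id-V_\alpha$ (putting, in the $k$-th term, $\Id-V_\alpha$ in the $k$-th slot and $V_\alpha$ elsewhere) converts this factor $n$ into a sum of tensor products:
\[
n\bnorm{V_\alpha^{n-1}(\Id-V_\alpha)\ot\Id_E}_{S^p_{2m_\alpha}(E)}\leq\Bgnorm{\sum_{k=1}^n V_\alpha^{\ot(k-1)}\ot(\Id-V_\alpha)\ot V_\alpha^{\ot(n-k)}\ot\Id_E}_{\L^p(\ovl{\ot}_{i=1}^n\B(\ell^2_{2m_\alpha}),E)}.
\]
Using $V_\alpha=Q\E J$, $\Id=QJ$ and the facts that $J^{\ot n}$ induces a complete isometry and $Q^{\ot n}$ a complete contraction (Proposition \ref{prop-tensorisation of CP maps}), the right-hand side is at most $\bnorm{\sum_{k=1}^n \E^{\ot(k-1)}\ot(\Id-\E)\ot\E^{\ot(n-k)}\ot\Id_E}_{\L^p(N^{\ot n},E)}$. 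With $\Pi_k=\Id^{\ot(k-1)}\ot\E\ot\Id^{\ot(n-k)}$ this equals $\sum_{k=1}^n(\Id-\Pi_k\ot\Id_E)\prod_{1\leq j\neq k\leq n}(\Pi_j\ot\Id_E)$, a combination of mutually commuting contractive projections. Since $N^{\ot n}$ is hyperfinite and $E$ is $\OK$-convex, Lemma \ref{Lemma-K-convex} bounds $\K(\L^p(N^{\ot n},E))$ independently of $n$ and $\alpha$, and Theorem \ref{Th-Pisier} then bounds the last expression by a constant $C$ depending only on $p$ and $E$.

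Finally, extracting the off-diagonal block (a complete contraction) yields $n\norm{M_{\varphi_\alpha}^{n-1}(\Id-M_{\varphi_\alpha})\ot\Id_E}_{S^p_{m_\alpha}(E)}\leq C$ uniformly in $\alpha$; Proposition \ref{Transference-Schur-discrete-continuous} applied to the continuous symbol $\varphi^{n-1}(1-\varphi)$ gives $n\bnorm{(M_\varphi^{\HS})^{n-1}(\Id-M_\varphi^{\HS})\ot\Id_E}_{S^p_G(E)}\leq C$, and transferring back via \eqref{Equa-transfer-leq} and \eqref{equavarphi} gives $n\norm{T^{n-1}(\Id-T)}\leq C$. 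Together with power-boundedness this is exactly \eqref{Definition-Ritt-operator}, so $T$ is a Ritt operator. I expect the main obstacle to be precisely this combinatorial identity and the matching use of Lemma \ref{Lemma-consequence-entrelacement}: the factor $n$ of the Ritt estimate must be absorbed into the $n$ tensor slots so that Pisier's sum inequality (Theorem \ref{Th-Pisier})---rather than an estimate growing with $n$---controls the expression uniformly.
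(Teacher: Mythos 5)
Your proposal is correct and follows essentially the same route as the paper's own proof: the same reduction to matrix Schur multipliers via transference and Proposition \ref{Transference-Schur-discrete-continuous}, the same block-matrix embedding $W_\alpha$ and Rota dilation from \cite{Ric}, the same use of Lemma \ref{Lemma-consequence-entrelacement} to convert the factor $n$ into $n$ tensor slots, and the same application of Theorem \ref{Th-Pisier} combined with Lemma \ref{Lemma-K-convex} to obtain a uniform bound. You also correctly identified the key structural difference from Theorem \ref{Th-amenable-semigroup}, namely the replacement of Pisier's product inequality by his sum inequality.
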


\begin{proof}
It is obvious that $M_\varphi \ot \Id_E$ is a well-defined contraction, hence a power-bounded operator. By assumption, there exists a selfadjoint contractive Fourier multiplier $M_\psi \co \VN(G) \to \VN(G)$ such that $M_\varphi$ is the square of $M_\psi$. Of course, we have $\psi^2=\varphi$. 

Since $G$ is amenable, the Fourier multiplier $M_\psi$ is completely contractive on $\VN(G)$. By \eqref{Equa-transfer=}, we can consider the associated selfadjoint contractive Schur multiplier $M_{\psi}^{\HS} \co \B(\L^2(G)) \to \B(\L^2(G))$ on the space $\B(\L^2(G))$. Using the part 2 of Proposition \ref{prop-tensorisation of CP maps}, we see that the map $M_{\psi} \ot \Id_E$ extends to a complete contraction on the space $\L^p(\VN(G),E)$. We deduce that
\begin{align*}
\bnorm{M_{\psi}^{\HS} \ot \Id_E}_{S^p_{G}(E) \to S^p_{G}(E)}
    &\leq \bnorm{M_{\psi}^{\HS} \ot \Id_E}_{\cb,S^p_{G}(E) \to S^p_{G}(E)}\\
		&\overset{\textrm{\eqref{Equa-transfer=}}}=\bnorm{M_{\psi} \ot \Id_E}_{\cb,\L^p(\VN(G),E) \to \L^p(\VN(G),E)} 
		\leq 1.
\end{align*}
Since the symbol of a completely bounded Fourier multiplier on $\VN(G)$ is equal almost everywhere to a continuous function, see e.g. \cite[Corollary 3.3]{Haa3}, we know that the symbol $\psi \co G \times G \to \R$ is continuous. For any finite family $\alpha=\{x_1,\ldots,x_{m_\alpha}\}$ of distinct elements of $G$, using Proposition \ref{Transference-Schur-discrete-continuous}, we can consider the contractive Schur multiplier $M_{\psi_{\alpha}}$ on $\B(\ell^2_{m_\alpha})$ defined by $\psi_{\alpha}=[\psi(x_ix_j^{-1})]$. As in the proof of \cite[Corollary 4.3]{Arh1}, there exists Schur multipliers $S_{1,\alpha}$ and $S_{2,\alpha}$ on $\B(\ell^2_{m_\alpha})$ such that
\begin{equation}
\label{W-alpha}
W_\alpha
\overset{\textrm{def}}=\begin{bmatrix}
    S_{1,\alpha} & M_{\psi_\alpha} \\
    (M_{\psi_\alpha})^\circ  & S_{2,\alpha} \\
\end{bmatrix}	
\end{equation}
is a completely positive unital self-adjoint Schur multiplier on $\B\big(\ell^2_{2m_\alpha}\big)$. We let $Z_\alpha \overset{\textrm{def}}=W^2_\alpha$. We see that
\begin{equation}
\label{bloc}
Z_\alpha
\ov{\eqref{W-alpha}}=\begin{bmatrix}
    S_{1,\alpha}      & M_{\psi_\alpha}\\
    \big(M_{\psi_\alpha}\big)^\circ  & S_{2,\alpha} \\
  \end{bmatrix}^2
=\begin{bmatrix}
    (S_{1,\alpha})^2                        & (M_{\psi_\alpha})^2 \\
    \big(M_{\psi_\alpha}\big)^{\circ 2}  & (S_{2,\alpha})^2 \\
		\end{bmatrix}.
\end{equation}
Combining the construction of the noncommutative Markov chain of \cite{Ric} (see also \cite[Theorem 5.3]{HM}) we infer that the Schur multiplier $Z_\alpha$ admits a Rota dilation 
$$
Z_\alpha^{k}
=Q\E_kJ,\qquad k \geq 1
$$
in the sense of \cite[Definition 10.2]{JMX} (extended to semifinite von Neumann algebras) where $J \co \B\big(\ell^2_{2m_\alpha}\big) \to N$ is a normal unital faithful $*$-representation into a von Neumann algebra (equipped with a trace) which preserve the traces, where $Q \co N \to \B\big(\ell^2_{2 m_\alpha}\big)$ is the conditional expectation associated with $J$ and where the $\E_k$'s are conditional expectations onto von Neumann subalgebras of $N$. Recall that the von Neumann algebra $\Gamma_{-1}^{e}(\ell^{2,T})$ of \cite{Ric} is hyperfinite. Hence, the von Neumann algebra $N$ of the Rota Dilation is also hyperfinite. In particular, if $\E\overset{\textrm{def}}=\E_1$ we have
\begin{equation}
\label{Dilation-esperance}
Z_\alpha
=Q\E J.	
\end{equation}
Let $n \geq 1$ be an integer. We can write
\begin{align}
\MoveEqLeft
\label{Eq-intermediaire}
 -n\big(Z_\alpha^n-Z_\alpha^{n-1}\big) 
=nZ_\alpha^{n-1}\big(\Id_{S^p_2(S^p_{m_\alpha})}-Z_\alpha\big)\\
&=\sum_{l=1}^n Z_\alpha^{n-1}\big(\Id_{S^p_2(S^p_{m_\alpha})}-Z_\alpha\big)
=\sum_{l=1}^n Z_\alpha^{n-l}\big(\Id_{S^p_2(S^p_{m_\alpha})}-Z_\alpha\big)Z_\alpha^{l-1}. \nonumber           
\end{align} 
Applying Lemma \ref{Lemma-consequence-entrelacement} with $m=n$ and the family $\{M_{\phi_{rl}}\}_{1 \leq r,l \leq n}$ of Schur multipliers on $S^p_2(S^p_{m_\alpha}(E))$ defined by
$$
M_{\phi_{ll}}
=\Id_{S^p_2(S^p_{m_\alpha})}-Z_\alpha
\qquad \text{and} \qquad
M_{\phi_{rl}}
=Z_\alpha \quad \text{if}\ r\not=l,
$$
we obtain
\begin{align*}
\MoveEqLeft
n\norm{\big(Z_\alpha^n-Z_\alpha^{n-1}\big) \ot \Id_{E}}_{S^p_2(S^p_{m_\alpha}(E)) \to S^p_2(S^p_{m_\alpha}(E)} \nonumber\\
&\overset{\eqref{Eq-intermediaire}}=n\norm{\sum_{l=1}^n Z_\alpha^{n-l}\big(\Id_{S^p_2(S^p_{m_\alpha})}-Z_\alpha\big)Z_\alpha^{l-1} \ot \Id_E}_{S^p_2(S^p_{m_\alpha}(E)) \to S^p_2(S^p_{m_\alpha}(E))} \nonumber \\
&\overset{\eqref{Majoration-norme-multiplicateur2}}\leq
\norm{\sum_{l=1}^n Z_\alpha^{\ot(n-l)} \ot \big(\Id_{S^p_2(S^p_{m_\alpha})}-Z_\alpha\big) \ot Z_\alpha^{\ot(l-1)} \ot \Id_E}_{S^p(\B(\ell^2_{2m_\alpha})^{\ot n},E) \to S^p(\B(\ell^2_{2m_\alpha})^{\ot n},E)}. \label{Tn}
\end{align*}
For any integer $1 \leq l \leq n$, we have
\begin{align*}
\MoveEqLeft
Z_\alpha^{\ot(n-l)} \ot (\Id_{S^p_2(S^p_{m_\alpha})}-Z_\alpha) \ot Z_\alpha^{\ot(l-1)}
\overset{\eqref{Dilation-esperance}}=(Q\E J)^{\ot(n-l)} \ot (\Id_{S^p_2(S^p_{m_\alpha})}-Q\E J) \ot (Q\E J)^{\ot(l-1)}\\
&=Q^{\ot n}\Big(\E^{\ot(n-l)} \ot \big(\Id_{\L^p(N)}-\E\big) \ot \E^{\ot(l-1)}\Big) J^{\ot n}.
\end{align*}
%
%
We deduce that
\begin{align}
\MoveEqLeft
n\norm{\big(Z_\alpha^n-Z_\alpha^{n-1}\big) \ot \Id_E}_{S^p_2(S^p_{m_\alpha}(E)) \to S^p_2(S^p_{m_\alpha}(E))}\nonumber\\
&\leq \norm{Q^{\ot n}\bigg(\sum_{l=1}^{n} \Big(\E^{\ot(n-l)} \ot \big(\Id_{\L^p(N)}-\E\big) \ot \E^{\ot(l-1)}\Big)\bigg) J^{\ot n}}_{S^p(\B(\ell^2_{2m_\alpha})^{\ot n},E) \to S^p(\B(\ell^2_{2m_\alpha})^{\ot n},E)} \nonumber\\
& \leq \norm{\sum_{l=1}^{n} \E^{\ot(n-l)} \ot (\Id_{\L^p(N)}-\E) \ot \E^{\ot(l-1)} \ot \Id_E}_{\L^p(\ovl{\ot}_{i=1}^{n} N,E) \to \L^p(\ovl{\ot}_{i=1}^{n} N,E)}.
\end{align} 
Now for any $l=1,\ldots,n$, define $P_l \co \L^p(\ovl{\ot}_{i=1}^{n} N,E) \to \L^p(\ovl{\ot}_{i=1}^{n} N,E)$ by
$$
P_l 
=\Id_{\L^p(N)}^{\ot(n-l)} \ot \E \ot \Id_{\L^p(N)}^{\ot(l-1)} \ot \Id_E.
$$ 
Then the operator in the right-hand side of the above inequality is equal to
\begin{equation}
\label{final}
\sum_{l=1}^n\Big(\Id_{\L^p(\ovl{\ot}_{i=1}^{n} N,E)} - P_l\big)
\prod_{1 \leq j \not=l \leq n} P_j.
\end{equation}
Since $\E$ is a conditional expectation, each $P_l$ is a contractive projection by Proposition \ref{prop-tensorisation of CP maps}. Moreover, the $P_l$'s mutually commute. By Lemma \ref{Lemma-K-convex}, the Banach space $\L^p(\ovl{\ot}_{i=1}^{n} N,E)$ is $\K$-convex and 
$$
\sup_{n \geq 1} \K\big(\L^p(\ovl{\ot}_{i=1}^{n} N,E)\big) 
<\infty.
$$ 
From Theorem \ref{Th-Pisier}, we deduce that the operators in (\ref{final}) are uniformly bounded. Consequently, the exists a positive constant $C$ such that
\begin{equation}
	\label{Equa-C}
\sup_{n \geq 1} n \norm{\big(Z_\alpha^n-Z_\alpha^{n-1}\big) \ot \Id_E}_{S^p_2(S^p_{m_\alpha}(E)) \to S^p_2(S^p_{m_\alpha}(E))} 
\leq C.	
\end{equation}
For any integer $n \geq 1$, note that
\begin{align}
\MoveEqLeft
	\label{bloc3}
Z_\alpha^n-Z_\alpha^{n-1}
\overset{\eqref{bloc}}=\begin{bmatrix}
    (S_{1,\alpha})^2  &   (M_{\psi_\alpha})^2         \\
    ((M_{\psi_\alpha})^2)^\circ  &   (S_{2,\alpha})^2 \\
  \end{bmatrix}^n
	-\begin{bmatrix}
    (S_{1,\alpha})^2  &  (M_{\psi_\alpha})^2         \\
    ((M_{\psi_\alpha})^2)^\circ  &  (S_{2,\alpha})^2 \\
  \end{bmatrix}^{n-1}\\
&=\begin{bmatrix}
    (S_{1,\alpha})^{2n}-(S_{1,\alpha})^{2(n-1)}  & (M_{\psi_\alpha})^{2n}-(M_{\psi_\alpha})^{2(n-1)}     \\
((M_{\psi_\alpha})^{2n})^\circ-((M_{\psi_\alpha})^{2(n-1)})^\circ  &   (S_{2,\alpha})^{2n}-(S_{2,\alpha})^{2(n-1)} \\
  \end{bmatrix}. \nonumber           
\end{align}
For any integer $n \geq 1$, we obtain that
\begin{align*}
\MoveEqLeft 
 n\Bnorm{(M_{\psi_\alpha})^{2n}-(M_{\psi_\alpha})^{2(n-1)} \ot \Id_E}_{S^p_{m_\alpha}(E) \to S^p_{m_\alpha}(E)}\\
&\overset{\eqref{bloc3}}\leq  n \norm{\big(Z_\alpha^n-Z_\alpha^{n-1}\big) \ot \Id_E}_{S^p_2(S^p_{m_\alpha}(E)) \to S^p_2(S^p_{m_\alpha}(E))} 
\overset{\eqref{Equa-C}}\leq C.
\end{align*}
We deduce with Proposition \ref{Transference-Schur-discrete-continuous} that
\begin{align*}
\MoveEqLeft
n\Bnorm{(M_{\psi})^{\HS})^{2n}-(M_{\psi}^{\HS})^{2(n-1)} \ot \Id_E}_{S^p_{G}(E) \to S^p_{G}(E)}
\leq C.
\end{align*}
Using \eqref{equavarphi}, since $\psi^2=\varphi$, this means that 
\begin{equation}
\label{}
n\Bnorm{\Big(\big(M_{\varphi}^{\HS}\big)^n-\big(M_{\varphi}^{\HS}\big)^{n-1}\Big) \ot \Id_E}_{S^p_{G}(E) \to S^p_{G}(E)} 
\leq C.	
\end{equation}
Using transference, we finally obtain that
\begin{align*}
\MoveEqLeft
\sup_{n \geq 1} n\Bnorm{\Big(\big(M_{\varphi}\big)^n-\big(M_{\varphi}\big)^{n-1}\Big) \ot \Id_E}_{\L^p(\VN(G),E) \to \L^p(\VN(G),E)} \\
&=\sup_{n \geq 1} n\Bnorm{\big(M_{\varphi^n}-M_{\varphi^{n-1}}\big) \ot \Id_E}_{\L^p(\VN(G),E) \to \L^p(\VN(G),E)}\\
&=\sup_{n \geq 1} n\bnorm{M_{\varphi^n-\varphi^{n-1}} \ot \Id_E}_{\L^p(\VN(G),E) \to \L^p(\VN(G),E)}\\
&\overset{\eqref{Equa-transfer-leq}}\leq \sup_{n \geq 1} n\Bnorm{M_{\varphi^n-\varphi^{n-1}}^\HS \ot \Id_E}_{S^p_{G}(E) \to S^p_{G}(E)}\\
&\overset{\eqref{equavarphi}}{=} \sup_{n \geq 1} n\Bnorm{\Big(\big(M_{\varphi}^{\HS}\big)^n-M_{\varphi}^{\HS}\big)^{n-1}\Big) \ot \Id_E}_{S^p_{G}(E) \to S^p_{G}(E)} \leq C <\infty.
\end{align*}
By \eqref{Definition-Ritt-operator}, we conclude that $M_\varphi \ot \Id_E \co \L^p(\VN(G),E) \to \L^p(\VN(G),E)$ is a Ritt operator.
\end{proof}

Similarly to Corollary \ref{Cor-abelien-semigroups}, we have the following result for abelian locally compact groups.

\begin{cor}
\label{Cor-groupes-commutatifs}
Let $G$ be a second countable abelian locally compact group. Let $X$ be a $\K'$-convex Banach space. Let $T \co \L^\infty(G) \to \L^\infty(G)$ be a Fourier multiplier which is the square of a contractive selfadjoint Fourier multiplier on $\L^\infty(G)$. Suppose $1<p<\infty$. Then $T \ot \Id_E \co \L^p(G,E) \to \L^p(G,E)$ is a Ritt operator.
\end{cor}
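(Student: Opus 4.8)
The plan is to imitate the one-line proof of Corollary \ref{Cor-abelien-semigroups}, reducing the abelian statement to the group von Neumann algebra setting of Theorem \ref{Main-result} by Pontryagin duality. First I would pass to the dual group $\hat{G}$, which is again a second countable abelian — hence unimodular amenable — locally compact group, so that the hypotheses of Theorem \ref{Main-result} are available for $\hat{G}$. The Fourier transform furnishes a trace-preserving $*$-isomorphism of von Neumann algebras $\L^\infty(G) \cong \VN(\hat{G})$, under which a Fourier multiplier on $\L^\infty(G)$ is carried to a Fourier multiplier $M_\varphi$ on $\VN(\hat{G})$ in the sense recalled in the preliminaries. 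Since this isomorphism is a $*$-isomorphism intertwining the two multiplier calculi, the hypothesis that $T$ is the square of a contractive selfadjoint Fourier multiplier transports verbatim: $M_\varphi$ is the square of a contractive selfadjoint Fourier multiplier $M_\psi$ on $\VN(\hat{G})$, with $\psi^2=\varphi$.

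Next I would bridge the gap between the Banach-space hypothesis ($X$ is $\K'$-convex) and the operator-space hypothesis demanded by Theorem \ref{Main-result} (an $\OK$-convex operator space). By Definition \ref{def-Kprime}, $\K'$-convexity of $X$ means precisely that $X$ is isomorphic to a Banach space $E$ carrying an operator space structure with $S^p(E)$ $\K$-convex, i.e. with $E$ an $\OK$-convex operator space. I would then apply Theorem \ref{Main-result} to $\hat{G}$, the operator space $E$, and the multiplier $M_\varphi$, concluding that $M_\varphi \ot \Id_E$ is a Ritt operator on $\L^p(\VN(\hat{G}),E)$.

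It remains to recognize this operator as the one in the statement. Because $\VN(\hat{G}) \cong \L^\infty(G)$ is commutative, the vector-valued noncommutative $\L^p$-space $\L^p(\VN(\hat{G}),E)$ coincides isometrically with the Bochner space $\L^p(G,E)$, and under this identification $M_\varphi \ot \Id_E$ is exactly $T \ot \Id_E$; this yields the statement as literally written. To recover the conclusion for the original space $X$, exactly as in Corollary \ref{Cor-abelien-semigroups}, I would transport along a Banach-space isomorphism $u \co X \to E$: conjugating $T \ot \Id_X$ on $\L^p(G,X)$ by $\Id_{\L^p(G)} \ot u$ produces $T \ot \Id_E$, and since the defining requirements of a Ritt operator (power-boundedness together with $\sup_{n \geq 1} n\norm{T^n-T^{n-1}}<\infty$) are invariant under similarity, $T \ot \Id_X$ is Ritt as well.

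The argument is essentially bookkeeping once Theorem \ref{Main-result} is granted, so there is no deep obstacle. The one point needing a little care is the passage from the vector-valued noncommutative $\L^p$-space over the commutative algebra $\VN(\hat{G})$ to the genuine Bochner space $\L^p(G,E)$: one must recall that for a commutative von Neumann algebra Pisier's vector-valued construction returns the Bochner space as a Banach space, so the operator space structure on $E$ — indispensable for invoking Theorem \ref{Main-result} — disappears from the final norm, and one must verify that $T$ on $\L^\infty(G)$ is truly carried to $M_\varphi$ on $\VN(\hat{G})$ in the multiplier sense fixed in the preliminaries.
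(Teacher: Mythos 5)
Your proposal is correct and is essentially the paper's own argument: the paper proves this corollary exactly as it proves Corollary \ref{Cor-abelien-semigroups}, namely via the $*$-isomorphism $\L^\infty(G) \cong \VN(\hat{G})$ given by Pontryagin duality followed by an application of Theorem \ref{Main-result} to the $\OK$-convex operator space $E$ furnished by the definition of $\K'$-convexity. Your additional care about identifying $\L^p(\VN(\hat{G}),E)$ with the Bochner space $\L^p(G,E)$ and transporting the Ritt property back to $X$ along the isomorphism $u \co X \to E$ only makes explicit what the paper leaves implicit.
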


By \cite[Corollary 0.1.1]{Lar} (and a duality argument), note that an operator satisfying the assumptions of Corollary \ref{Cor-groupes-commutatifs} is precisely an operator $T$ defined by $T(f)=\mu \ast \mu \ast f$ where $\mu$ is a symmetric\footnote{\thefootnote. Recall that a bounded measure $\mu$ is symmetric if $\mu(-A)=\mu(A)$ for any measurable subset $A$ of $G$. It is clear that a bounded measure $\mu$ is symmetric if and only if its Fourier transform $\hat{\mu}$ is real-valued.} bounded measure on $G$ with $\norm{\mu} \leq 1$. Thus, with a noncommutative extra assumption on $X$, this result gives an extension of the main result \cite[Theorem 11]{LLM} of the paper of Lancien and Le Merdy which is stated for a convolution operator induced by the square of a symmetric \textit{probability measure} on $G$. In our result, we does not need that each $\mu$ is positive or that $T(1)=1$. 

Similarly, we can prove the next results. We skip the details. 

\begin{thm}
\label{Th-main-Schur}
Let $E$ be a $\OK$-convex operator space and $I$ be an index set equipped with the discrete measure. Let $M_\varphi \co \B(\ell^2_I) \to \B(\ell^2_I)$ be a Schur multiplier which is the square of a contractive selfadjoint Schur multiplier on $\B(\ell^2_I)$. Suppose $1<p<\infty$. Then $M_{\varphi} \ot \Id_E \co S^p_I(E) \to S^p_I(E)$ is a Ritt operator on the vector-valued Schatten space $S^p_I(E)$.
\end{thm}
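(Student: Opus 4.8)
The plan is to mirror the proof of Theorem~\ref{Main-result} almost verbatim, but since we already work with Schur multipliers we can dispense entirely with the Fourier-to-Schur transference \eqref{Equa-transfer-leq}--\eqref{Equa-transfer=}. Write $M_\varphi=(M_\psi)^2=M_{\psi^2}$ for a contractive selfadjoint Schur multiplier $M_\psi$ with (real, Hermitian) symbol $\psi$, so that $(M_\varphi)^n=M_{\psi^{2n}}=(M_\psi)^{2n}$. A contractive Schur multiplier on $\B(\ell^2_I)$ is automatically completely contractive, so $M_\psi$ is a selfadjoint complete contraction; as $\B(\ell^2_I)$ is approximately finite-dimensional, Proposition~\ref{prop-tensorisation of CP maps} shows that $M_\psi\ot\Id_E$, and hence $M_\varphi\ot\Id_E=(M_\psi\ot\Id_E)^2$, is completely contractive on $S^p_I(E)$. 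Thus $M_\varphi\ot\Id_E$ is power-bounded, and by the characterization \eqref{Definition-Ritt-operator} it remains to produce a constant $C$ with $n\bnorm{\big((M_\varphi)^n-(M_\varphi)^{n-1}\big)\ot\Id_E}_{S^p_I(E)\to S^p_I(E)}\leq C$ for all $n\geq1$.

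First I would reduce to finite truncations. Reducing to finite submatrices $\alpha=\{x_1,\ldots,x_{m_\alpha}\}\subset I$ (the norm of a Schur multiplier being the supremum of the norms of its finite submatrices, via Proposition~\ref{Transference-Schur-discrete-continuous}), I work with the matrix $\psi_\alpha=[\psi(x_i,x_j)]$ and recover the global estimate by letting $\alpha$ vary. On such a block I would, exactly as in \cite[Corollary~4.3]{Arh1}, complete $M_{\psi_\alpha}$ to a completely positive unital selfadjoint Schur multiplier
$$
W_\alpha=\begin{bmatrix} S_{1,\alpha} & M_{\psi_\alpha}\\ (M_{\psi_\alpha})^\circ & S_{2,\alpha}\end{bmatrix}
$$
on $\B(\ell^2_{2m_\alpha})$, set $Z_\alpha=W_\alpha^2$ (whose off-diagonal corner is $(M_{\psi_\alpha})^2=M_{\varphi_\alpha}$), and invoke the noncommutative Markov chain of \cite{Ric} to obtain a Rota dilation $Z_\alpha^k=Q\E_kJ$ into a hyperfinite von Neumann algebra $N$, with $\E\ov{\textrm{def}}=\E_1$ and $\Id=QJ$.

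Then I would run the Pisier projection argument. Writing $-n(Z_\alpha^n-Z_\alpha^{n-1})=\sum_{l=1}^n Z_\alpha^{n-l}(\Id-Z_\alpha)Z_\alpha^{l-1}$ and applying Lemma~\ref{Lemma-consequence-entrelacement} with $m=n$, the norm of $n(Z_\alpha^n-Z_\alpha^{n-1})\ot\Id_E$ on $S^p_2(S^p_{m_\alpha}(E))$ is dominated by the norm on $\L^p(\ovl{\ot}_{i=1}^n N,E)$ of
$$
Q^{\ot n}\Big(\sum_{l=1}^n\E^{\ot(n-l)}\ot(\Id_{\L^p(N)}-\E)\ot\E^{\ot(l-1)}\Big)J^{\ot n}.
$$
Introducing the mutually commuting contractive projections $P_l=\Id_{\L^p(N)}^{\ot(n-l)}\ot\E\ot\Id_{\L^p(N)}^{\ot(l-1)}\ot\Id_E$ (contractive by Proposition~\ref{prop-tensorisation of CP maps}), the middle factor equals $\sum_{l=1}^n(\Id-P_l)\prod_{1\leq j\neq l\leq n}P_j$. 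Since $E$ is $\OK$-convex and $N$ is hyperfinite, Lemma~\ref{Lemma-K-convex} gives $\sup_n\K\big(\L^p(\ovl{\ot}_{i=1}^n N,E)\big)<\infty$, so Theorem~\ref{Th-Pisier} bounds these sums by a constant $C$ independent of $\alpha$ and $n$. Reading off the off-diagonal corner of $Z_\alpha^n-Z_\alpha^{n-1}$ yields $n\bnorm{\big((M_{\psi_\alpha})^{2n}-(M_{\psi_\alpha})^{2(n-1)}\big)\ot\Id_E}\leq C$; letting $\alpha$ vary, using Proposition~\ref{Transference-Schur-discrete-continuous} and $\psi^2=\varphi$, gives the desired Ritt estimate and hence the claim.

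The hard part is not the argument itself---downstream of the two imported ingredients it is pure bookkeeping identical to the Fourier case---but rather confirming that those ingredients genuinely transfer to the pure Schur setting: that \cite[Corollary~4.3]{Arh1} produces the completely positive unital selfadjoint completion $W_\alpha$ for an \emph{arbitrary} contractive selfadjoint Schur matrix (not merely one arising from a Herz--Schur symbol), and that the Rota dilation of \cite{Ric} applies to $Z_\alpha=W_\alpha^2$. Once these are granted, Lemma~\ref{Lemma-consequence-entrelacement}, the projection identity, and Theorem~\ref{Th-Pisier} apply unchanged, which is precisely why the details may be safely skipped.
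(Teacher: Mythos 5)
Your proposal is correct and is essentially the paper's intended argument: the paper itself omits the proof of Theorem \ref{Th-main-Schur} with the remark that it is proved ``similarly'' to Theorem \ref{Main-result}, and your write-up is exactly that adaptation (drop the Fourier--Schur transference, truncate to finite blocks, complete to $W_\alpha$ via \cite[Corollary 4.3]{Arh1}, apply the Rota dilation of \cite{Ric}, and run the Pisier projection estimate). The two ingredients you flag do transfer: the completion in \cite[Corollary 4.3]{Arh1} only uses contractivity and selfadjointness of the Schur symbol (not any Herz--Schur structure), and Ricard's dilation is stated precisely for selfadjoint unital completely positive Schur multipliers on discrete index sets, so it applies to $Z_\alpha=W_\alpha^2$ as in the Fourier case.
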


If the group is discrete, we can remove the assumption of second countability since transference \cite{NR} remains valid.

\begin{thm}
\label{Th-discrete-groups}
Let $G$ be a discrete group. Let $E$ be a $\OK$-convex operator space. Let $M_\varphi \co \VN(G) \to \VN(G)$ be a Fourier multiplier which is the square of a contractive selfadjoint Fourier multiplier on $\VN(G)$. Suppose $1<p<\infty$. Then $M_{\varphi} \ot \Id_E \co \L^p(\VN(G),E) \to \L^p(\VN(G),E)$ is a Ritt operator.
\end{thm}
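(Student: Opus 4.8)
The plan is to run the argument of Theorem~\ref{Main-result} essentially verbatim, the only structural change being that the Caspers--de la Salle transference \cite{CDS} between Fourier and Herz--Schur multipliers, which requires $G$ to be amenable and second countable, is replaced by the Neuwirth--Ricard transference \cite{NR}, valid for an \emph{arbitrary} discrete group. Concretely, I would write $M_\varphi=(M_\psi)^2$ with $M_\psi$ a selfadjoint contractive Fourier multiplier, and pass to its associated Herz--Schur multiplier $M_\psi^{\HS}$ on $\B(\ell^2_G)$; for a discrete group the symbol is automatically defined on all of $G$, so the continuous-to-discrete reduction of Proposition~\ref{Transference-Schur-discrete-continuous} is not needed. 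A further simplification over the locally compact case is that the index set may be taken to be $G$ itself with the counting measure, which lets me work \emph{directly} on $S^p_G(E)$ through the Fell-absorption identity \eqref{Absorption} and Lemmas~\ref{Prop Fell absorption} and~\ref{Lemma-consequence-entrelacement}.

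The heart of the proof is then identical to that of Theorem~\ref{Main-result}. Following \cite[Corollary 4.3]{Arh1} I would embed $M_\psi^{\HS}$ as the off-diagonal corner of a unital completely positive selfadjoint Schur multiplier
$$
W=\begin{bmatrix} S_1 & M_\psi^{\HS} \\ (M_\psi^{\HS})^\circ & S_2 \end{bmatrix}
$$
on $S^p_2(S^p_G)$, and set $Z=W^2$, whose off-diagonal corner is $(M_\psi^{\HS})^2=M_\varphi^{\HS}$. Invoking the Rota dilation $Z^k=Q\E_kJ$ constructed in \cite{Ric} (in the sense of \cite[Definition 10.2]{JMX}), with $N$ hyperfinite and $\E=\E_1$, I would expand $-n(Z^n-Z^{n-1})=\sum_{l=1}^n Z^{n-l}(\Id-Z)Z^{l-1}$, push it into a tensor product via Lemma~\ref{Lemma-consequence-entrelacement}, and rewrite the outcome through the dilation as $Q^{\ot n}\big(\sum_{l=1}^n \E^{\ot(n-l)}\ot(\Id-\E)\ot\E^{\ot(l-1)}\big)J^{\ot n}$. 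Recognising the inner sum as $\sum_{l=1}^n(\Id-P_l)\prod_{j\neq l}P_j$ for the mutually commuting contractive projections $P_l$ on $\L^p(\ovl{\ot}_{i=1}^n N,E)$, I would apply Lemma~\ref{Lemma-K-convex} (so that these spaces are uniformly $\K$-convex, using the $\OK$-convexity of $E$) together with Pisier's Theorem~\ref{Th-Pisier} to obtain a constant $C$ with $\sup_n n\norm{(Z^n-Z^{n-1})\ot\Id_E}\leq C$; extracting the corner via the block computation then gives $\sup_n n\bnorm{\big((M_\psi^{\HS})^{2n}-(M_\psi^{\HS})^{2(n-1)}\big)\ot\Id_E}_{S^p_G(E)\to S^p_G(E)}\leq C$.

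To finish I would use $(M_\psi^{\HS})^2=M_\varphi^{\HS}$ together with \eqref{equavarphi} and the discrete transference inequality of \cite{NR} (the analogue of \eqref{Equa-transfer-leq}) to carry this estimate back to the Fourier side, yielding $\sup_n n\bnorm{\big(M_\varphi^n-M_\varphi^{n-1}\big)\ot\Id_E}_{\L^p(\VN(G),E)\to\L^p(\VN(G),E)}\leq C$. Since $M_\varphi\ot\Id_E$ is a complete contraction, hence power-bounded, the characterisation \eqref{Definition-Ritt-operator} then gives that it is a Ritt operator.

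The only genuine departure from Theorem~\ref{Main-result}, and the step where I would expect to spend the most care, is checking that all of the completely positive and transference machinery survives in the absence of second countability. The two hyperfiniteness inputs used above concern only $\B(\ell^2_G)$ together with its tensor powers and the dilation algebra $N$ (the algebra $\Gamma_{-1}^e$ of \cite{Ric}), all of which are approximately finite-dimensional irrespective of $G$, so Proposition~\ref{prop-tensorisation of CP maps} and the Rota dilation apply unchanged. The delicate ingredient is therefore the transference itself: one must verify that the vector-valued estimate \eqref{Equa-transfer-leq} and the completely bounded identity \eqref{Equa-transfer=} — equivalently, that the passage between $M_\psi$ and $M_\psi^{\HS}$ preserves complete contractivity — remain valid for every discrete group. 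This is exactly what \cite{NR} supplies, the second-countability hypothesis having been needed only in the locally compact argument of \cite{CDS}; this is the point at which discreteness is essential.
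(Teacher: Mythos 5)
Your core argument is indeed the paper's proof of Theorem \ref{Main-result} run verbatim, and the two simplifications you point out for the discrete case are sound: the symbol lives on all of $G$, so Haagerup's continuity result and the reduction of Proposition \ref{Transference-Schur-discrete-continuous} to finite submatrices can be skipped, and since Lemma \ref{Prop Fell absorption}, Lemma \ref{Lemma-consequence-entrelacement} and Ricard's dilation \cite{Ric} are available over an arbitrary index set, one may work directly on $S^p_G(E)$ with $W$, $Z=W^2$, the expansion $-n(Z^n-Z^{n-1})=\sum_{l=1}^n Z^{n-l}(\Id-Z)Z^{l-1}$, the commuting projections $P_l$, Lemma \ref{Lemma-K-convex} and Theorem \ref{Th-Pisier}. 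Up to that point your proposal matches the paper's intended proof.

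The genuine gap is your final paragraph: amenability of $G$ cannot be dispensed with, and \cite{NR} does not supply what you claim for an \emph{arbitrary} discrete group. First, you need transference in both directions: at the start, $\bnorm{M_\psi^{\HS} \ot \Id_E}_{\cb,S^p_G(E) \to S^p_G(E)} \leq \bnorm{M_\psi \ot \Id_E}_{\cb,\L^p(\VN(G),E) \to \L^p(\VN(G),E)}$ (Schur dominated by Fourier), and at the end the plain-norm inequality \eqref{Equa-transfer-leq} (Fourier dominated by Schur). For a general discrete group, Neuwirth--Ricard only establish the one-sided completely bounded inequality $\norm{M_\varphi}_{\cb} \leq \bnorm{M_\varphi^{\HS}}_{\cb}$; the reverse inequality (hence the equality \eqref{Equa-transfer=}) and the norm transference \eqref{Equa-transfer-leq} are obtained from F\o lner-set approximation and are only available for \emph{amenable} discrete groups. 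Second, the step ``contractive selfadjoint Fourier multiplier $\Rightarrow$ completely contractive'' is \cite[Corollary 1.8]{DCH}, which again requires amenability; without it, $M_\psi^{\HS}$ need not even be a bounded Schur multiplier. Third, contrary to your claim that hyperfiniteness is used only for $\B(\ell^2_G)$ and the dilation algebra $N$, the proof of Theorem \ref{Main-result} applies Proposition \ref{prop-tensorisation of CP maps}(2) to $M=N=\VN(G)$ itself (to make $M_\psi \ot \Id_E$ and $M_\varphi \ot \Id_E$ contractions on $\L^p(\VN(G),E)$), and more fundamentally the space $\L^p(\VN(G),E)$ exists in Pisier's theory \cite{Pis5} only when $\VN(G)$ is approximately finite-dimensional, which for discrete $G$ is \emph{equivalent} to amenability. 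So the correct reading of the theorem -- and of the paper's remark, whose footnote explains that second countability enters only through the dominated-convergence step of \cite{CDS}, and whose introduction defers non-amenable groups to \cite{Arh5} -- is that discreteness removes the second countability hypothesis while amenability is retained; your proof is correct exactly in that setting, but the claim that ``this is the point at which discreteness is essential'' and amenability plays no role is where the argument breaks.
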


\begin{remark} \normalfont
We does not know if we can remove the ``square'' from the statements.
\end{remark}

\section{ $\K'$-convex Banach spaces}
\label{K'-convex}

In this section, we examine the impact of usual operations on the class of $\K'$-convex Banach spaces defined in Definition \ref{def-Kprime}. The following result shows that this class has very good stability properties. Note these properties are also shared by the class of $\K$-convex Banach spaces. Furthermore, this result allows use to give a lot of examples of $\K'$-Banach spaces.

\begin{prop}
Let $X$ be a $\K'$-convex Banach space.
\begin{enumerate}
\item The dual Banach space $X^*$ is $\K'$-convex.

\item If $\Omega$ is a (localizable) measure space and if $1<p<\infty$, the Bochner space $\L^p(\Omega,X)$ is $\K'$-convex.

\item If $Y$ is a closed subspace of $X$, then both $Y$ and $X/Y$ are $\K'$-convex.

\item If $Y$ is a Banach space and if $0<\theta<1$ then the interpolation space $(X,Y)_\theta$ is $\K'$-convex.
	
\item If $Y$ is a $\K'$-convex Banach space and if $1<p<\infty$ then the Banach space $X \oplus_p Y$ is $\K'$-convex.
	
		\item Let $I$ be an index set and let $\ul$ be an ultrafilter on $I$. Then the ultrapower $X^\ul$ is $\K'$-convex.
	
\item If $Y$ is a Banach space isomorphic to $X$ then $Y$ is $\K'$-convex.
	
\item Suppose $1<p<\infty$. A noncommutative $\L^p$-space $\L^p(M)$ is $\K'$-convex.
\end{enumerate}

\end{prop}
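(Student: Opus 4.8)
The plan is to exhibit a concrete operator space realization of $\L^p(M)$ and to check that it is $\OK$-convex; taking $E=\L^p(M)$ (isomorphic to itself via the identity) in Definition \ref{def-Kprime} then yields $\K'$-convexity immediately. Concretely, I would endow $\L^p(M)$ with its canonical operator space structure as a noncommutative $\L^p$-space, in the sense of Pisier \cite{Pis5}. By the very definition of $\OK$-convexity, it then suffices to prove that the vector-valued Schatten space $S^p(\L^p(M))$ is a $\K$-convex Banach space.

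The key structural input is the Fubini-type identification $S^p(\L^p(M)) = \L^p(\B(\ell^2) \otvn M)$, which holds because $\B(\ell^2)$ is hyperfinite; see \cite{Pis5}. In particular $S^p(\L^p(M))$ is \emph{itself} a noncommutative $\L^p$-space, associated with the semifinite von Neumann algebra $\B(\ell^2) \otvn M$. Thus the entire statement reduces to the single assertion that every noncommutative $\L^p$-space is $\K$-convex for $1<p<\infty$, applied to the algebra $\B(\ell^2)\otvn M$.

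To establish this last point I would invoke the fact that noncommutative $\L^p$-spaces have nontrivial type for $1<p<\infty$: for $2 \leq p < \infty$ they have type $2$, and for $1<p\leq 2$ they have type $p$, in both cases a type strictly larger than $1$ (this follows from the noncommutative Khintchine inequalities; see the survey \cite{PiX}). By Pisier's theorem a Banach space is $\K$-convex if and only if it has nontrivial type, equivalently if and only if it does not contain the $\ell^1_n$ uniformly, as recalled at the very beginning of the paper following \cite{Pis3}. Hence $\L^p(\B(\ell^2)\otvn M)$ is $\K$-convex, so $\L^p(M)$ is $\OK$-convex, and therefore $\K'$-convex.

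The main obstacle is not the logical skeleton, which is short, but checking that the two imported facts apply in full generality. First, one must make sure the identification $S^p(\L^p(M)) = \L^p(\B(\ell^2) \otvn M)$ is valid for an \emph{arbitrary} (semifinite) von Neumann algebra $M$ equipped with its canonical operator space structure, not merely in the hyperfinite case treated elsewhere in the paper; this is where the role of the hyperfiniteness of $\B(\ell^2)$ (rather than of $M$) must be emphasized. Second, one should confirm that the nontrivial-type estimate for noncommutative $\L^p$-spaces is uniform over all $M$, which it indeed is, since the type constants produced by the noncommutative Khintchine inequalities depend only on $p$. Both ingredients are standard, so the work consists mainly in citing them with the correct level of generality.
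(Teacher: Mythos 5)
Your treatment of item 8 is correct and is essentially identical to the paper's own proof of that item: the paper likewise equips $\L^p(M)$ with its natural operator space structure, identifies $S^p(\L^p(M))$ isometrically with the noncommutative $\L^p$-space $\L^p(M,S^p)=\L^p(\B(\ell^2)\otvn M)$ via Pisier's Fubini-type theorem \cite[(3.6)']{Pis5}, invokes \cite[Corollary 5.5]{PiX} to get type $>1$, and concludes $\K$-convexity from Pisier's type/$\K$-convexity theorem \cite[Theorem 13.3]{DJT}. Your remark that the identification only uses the hyperfiniteness of $\B(\ell^2)$, and not any hypothesis on $M$, is accurate and is precisely why the argument works for arbitrary semifinite $M$.

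The genuine gap is one of coverage: the statement is an eight-item proposition, and your proposal proves only item 8. Items 1 through 7 (duality, Bochner spaces, subspaces and quotients, interpolation, $p$-direct sums, ultrapowers, and isomorphic invariance) are not addressed, and none of them follows from your argument, because $\L^p(\Omega,X)$, $X^*$, $Y$, $X/Y$, $(X,Y)_\theta$, $X\oplus_p Y$ and $X^\ul$ need not be noncommutative $\L^p$-spaces even when $X$ is. Each item requires its own commutation of $S^p(\cdot)$ with the construction at hand, followed by the corresponding Banach-space stability result for $\K$-convexity: for instance $S^p(E^*)=(S^{p^*}(E))^*$ \cite[Corollary 1.8]{Pis5} together with duality of $\K$-convexity for item 1; $S^p(\L^p(\Omega,E))=\L^p(\Omega,S^p(E))$ \cite[(3.6)]{Pis5} together with \cite[Lemma 10]{LLM} for item 2; the injectivity of the Haagerup tensor product \cite[Proposition 9.2.5]{ER} and Giesy's theorem \cite[Theorem 9]{Gie} for item 3; $S^p((E,\min(Y))_\theta)=(S^p(E),S^p(\min(Y)))_\theta$ \cite[(1.5)]{Pis5} and \cite[Th\'eor\`eme 1]{Pis8} for item 4; and $S^p_n(E^\ul)=(S^p_n(E))^\ul$ together with the fact that $\K$-convexity is a super-property \cite[page 1304]{Mau}, plus a density argument over $\cup_{n\geq 1}S^p_n(E^\ul)$, for item 6. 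As written, the proposal establishes one-eighth of the proposition; the remaining items are missing, not merely elided.
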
 

\begin{proof}
We consider an isomorphism $T \co X \to E$ where $E$ is a Banach space equipped with an operator space structure such that $S^p(E)$ is $\K$-convex for any $1<p<\infty$.

1) Note that $X^*$ is isomorphic to $E^*$. We equip the dual Banach space $E^*$ with the canonical operator space structure \cite [page 41]{ER} induced by the one of $E$. By \cite[Corollary 1.8]{Pis5}, we have $S^p(E^*)=(S^{p^*}(E))^*$ isometrically. By \cite[Corollary 13.7 and Theorem 13.15]{DJT}, we infer that the Banach space $S^p(E^*)$ is $\K$-convex. So we conclude that the Banach space $X^*$ is $\K'$-convex.

2) Note that the Banach space $\L^p(\Omega,X)$ isomorphic to $\L^p(\Omega,E)$. We equip the Banach space $\L^p(\Omega,E)$ with its natural operator space structure defined in \cite[page 32]{Pis5}. By \cite[(3.6)]{Pis5}, we have $S^p(\L^p(\Omega,E))=\L^p(\Omega,S^p(E))$ isometrically. By \cite[Lemma 10]{LLM}, we infer that $S^p(\L^p(\Omega,E))$ is $\K$-convex. We deduce that the Banach space $\L^p(\Omega,X)$ is $\K'$-convex. 

3) Note that the Banach space $Y$ is isomorphic to $T(Y)$ and that $X/Y$ is isomorphic to $E/T(Y)$. We equip the Banach space $T(Y)$ with the operator space structure induced by the isometric inclusion $T(Y) \subset E$ and the quotient $E/T(Y)$ with its canonical operator space structure described in \cite[pages 37-38]{ER}. By \cite[Theorem 1.1]{Pis5} and by the injectivity of the Haagerup tensor product \cite[Proposition 9.2.5]{ER}, we have an isometric inclusion $S^p(T(Y)) \subset S^p(E)$. Using \cite[Theorem 13.15]{DJT} and \cite[Theorem 9]{Gie}, we infer that $S^p(T(Y))$ is $\K$-convex. We deduce that the Banach space $Y$ is $\K'$-convex. By \cite[(3.4)]{Pis5}, we have an isometry $S^p(E/T(Y))=S^p(E)/S^p(T(Y))$. By \cite[Theorem 13.3 and Proposition 11.11]{DJT}, we obtain that $S^p(E/T(Y))$ is $\K$-convex. So we conclude that the Banach space $X/Y$ is $\K'$-convex.

4) Note that the Banach space $(X,Y)_\theta$ is isomorphic to the underling Banach space $(E,Y)_\theta$ of the operator space $(E,\min(Y))_\theta$. By \cite[(1.5)]{Pis5}, we have 
$$
S^p((E,\min(Y))_\theta)
=(S^p(E),S^p(\min(Y)))_\theta
$$ 
isometrically. By \cite[Th\'eor\`eme 1]{Pis8}, we deduce that the Banach space $S^p((E,\min(Y))_\theta)$ is $\K$-convex. Hence the Banach space $(X,Y)_\theta$ is $\K'$-convex.
	
5) Note that $Y$ is isomorphic to a Banach space $F$ such that there exists an operator space structure on $F$ such that $S^p(F)$ is $\K$-convex. Observe that $X \oplus_p Y$ is isomorphic to $E \oplus_p F$. We equip the Banach space $E \oplus_p F$ with the operator space structure described in \cite[pages 34-35]{Pis5}. Using \cite[(2.9)]{Pis5}, we see that 
$$
S^p(E \oplus_p F)=S^p(E) \oplus_p S^p(F).
$$ 
By \cite[Theorem 13.15]{DJT} and \cite[Lemma 11]{Gie}, we infer that $S^p(E \oplus_p F)$ is $\K$-convex. So we conclude that the Banach space $X \oplus_p Y$ is $\K'$-convex.	
	
6) Note that the Banach spaces $X^\ul$ and $E^\ul$ are isomorphic. We equip $E^\ul$ with its canonical operator space structure described in \cite[page 185]{ER}. Recall that $\K$-convexity is a super-property by \cite[page 1304]{Mau} and \cite[Theorem 13.15]{DJT}. Using \cite[page 1304]{Mau}, we see that the Banach space $S^p(E)^\ul$ is $\K$-convex. By \cite[Lemma 5.4]{Pis5}, for any integer $n \geq 1$, we have an isometry $S^p_n(E^\ul)=(S^p_n(E))^\ul$. Moreover, we have an isometric inclusion $(S^p_n(E))^\ul \subset (S^p(E))^\ul$. We deduce that each $S^p_n(E^\ul)$ is $\K$-convex and that
$$
\K\big(S^p_n\big(E^\ul\big)\big)
=\K\big(S^p_n(E)^\ul\big)
\leq \K\big(S^p\big(E\big)^\ul\big).
$$
Now $\cup_{n \geq 1} S^p_n(E^\ul)$ is dense in $S^p(E^\ul)$ by \cite[Remark page 23]{Pis5}. If $P$ is the map defined in \eqref{Projection-Rademacher}, we infer that the map $P \ot \Id_{S^p(E^\ul)} \co \L^2(\Omega_0,S^p(E^\ul)) \to \L^2(\Omega_0,S^p(E^\ul))$ is a well-defined bounded operator. Hence the Banach space $S^p(E^\ul)$ is $\K$-convex. We conclude that $X^\ul$ is $\K'$-convex.

7) It is obvious.

8) We equip $\L^p(M)$ with its natural operator space structure. By \cite[(3.6)']{Pis5}, we have $S^p(\L^p(M))=\L^p(M,S^p)$ isometrically. By \cite[Corollary 5.5]{PiX}, the noncommutative $\L^p$-space $\L^p(M,S^p)$, has type $>1$. By \cite[Theorem 13.3]{DJT}, we deduce that $\L^p(M,S^p)$ is $\K$-convex. We conclude that $\L^p(M)$ is $\K'$-convex.
\end{proof}

\textbf{Acknowledgements} 
The author will thank Christian Le Merdy and Christoph Kriegler for a discussion on Proposition \ref{Finitely}.

\small

\vspace{0.2cm}
\footnotesize{
\noindent C\'edric Arhancet\\ 
\noindent13 rue Didier Daurat, 81000 Albi, France\\
URL: \href{http://sites.google.com/site/cedricarhancet}{http://sites.google.com/site/cedricarhancet}\\
cedric.arhancet@protonmail.com\\
}

\end{document}